\documentclass{amsart}

\usepackage{geometry}
\geometry{
  inner=3.9cm,
  outer=3.4cm,
}

\usepackage[english]{babel}
\usepackage{latexsym, mathrsfs}
\usepackage{amsfonts, amsthm, amsmath, amssymb}
\usepackage{graphicx}
\usepackage[table]{xcolor}
\usepackage{amsaddr}

\newtheorem{prop}{Proposition}[section]
\newtheorem{lem}[prop]{Lemma}
\newtheorem{cor}[prop]{Corollary}
\newtheorem{thm}[prop]{Theorem}
\theoremstyle{definition}

\newtheorem{defi}[prop]{Definition}
\newtheorem{ex}[prop]{Example}

\def\Z{\mathbb{Z}}
\def\N{\mathbb{N}}

\def\equad{\quad \textrm{ and } \quad}
\def\omb{\cellcolor{black!5}}

\def\H{\mathrm{H}}
\def\SMA{\mathrm{SMA}}
\def\MR{\mathrm{MR}}
\def\supp{\mathsf{supp}}

\def\lcm{\mathrm{lcm}}

\def\B{\mathcal{B}}
\def\E{\mathcal{E}}

\def\G{\mathcal{G}}

\def\wt{\widetilde}

\numberwithin{equation}{section}
\newcommand\con{\mathop{+\mkern-10mu+}}

\begin{document}

\title[Magic rectangles, signed magic arrays and\ldots]{Magic rectangles, signed magic arrays 
and \\ integer $\lambda$-fold relative Heffter arrays}

\author[Fiorenza Morini]{Fiorenza Morini}
\address{Dipartimento di Scienze Matematiche, Fisiche e Informatiche, Universit\`a di Parma,\\
Parco Area delle Scienze 53/A, 43124 Parma, Italy}
\email{fiorenza.morini@unipr.it}

\author[Marco Antonio Pellegrini]{Marco Antonio Pellegrini}
\address{Dipartimento di Matematica e Fisica, Universit\`a Cattolica del Sacro Cuore, \\
Via Musei 41, 25121 Brescia, Italy}
\email{marcoantonio.pellegrini@unicatt.it}

\begin{abstract}
Let $m,n,s,k$ be  integers such that $4\leq s\leq n$, $4\leq k \leq m$ and $ms=nk$.
Let $\lambda$ be a divisor of $2ms$ and let $t$ be a divisor of  $\frac{2ms}{\lambda}$.
In this paper we construct magic rectangles $\MR(m,n;s,k)$, signed magic arrays $\SMA(m,n;s,k)$ and 
integer $\lambda$-fold relative Heffter arrays
${}^\lambda \H_t(m,n;s,k)$ where $s,k$ are even integers.
In particular, we prove that there exists an $\SMA(m,n;s,k)$ 
for all $m,n,s,k$ satisfying the previous hypotheses.
Furthermore, we prove that there exist an $\MR(m,n;s,k)$ and an integer ${}^\lambda\H_t(m,n;s,k)$ in each of the following cases:
$(i)$ $s,k \equiv 0 \pmod 4$;
$(ii)$ $s\equiv 2\pmod 4$ and $k\equiv 0 \pmod 4$;
$(iii)$ $s\equiv 0\pmod 4$ and $k\equiv 2 \pmod 4$;
$(iv)$ $s,k\equiv 2 \pmod 4$ and $m,n$ both even. 
\end{abstract}

\keywords{Magic rectangle, signed magic array, Heffter array}
\subjclass[2010]{05B20; 05B30}

\maketitle

\section{Introduction}\label{sec:Intro}

In this paper we study partially filled (p.f., for short) arrays, with entries in $\Z$ and whose rows and columns 
have prescribed sums.
In particular, we construct \emph{magic rectangles}, \emph{signed magic arrays} and \emph{integer $\lambda$-fold relative Heffter arrays}.

\begin{defi}\label{def:Magic}
A \emph{signed magic array} $\SMA(m,n; s,k)$ is an $m\times n$ p.f.  array
with elements in $\Omega\subset \Z$, where $\Omega=\{0,\pm 1, \pm 2,\ldots, \pm (ms-1)/2\}$ if $ms$ is odd and
$\Omega=\{\pm 1, \pm 2, \ldots, \pm ms/2\}$ if $ms$ is even, such that
\begin{itemize}
\item[($\rm{a})$] each row contains $s$ filled cells and each column contains $k$ filled cells;
\item[($\rm{b})$] every $x \in \Omega$ appears exactly once in the array;
\item[($\rm{c})$] the elements in every row and column sum to $0$.
\end{itemize}
\end{defi}

The existence of an $\SMA(m,n;s,k)$ has been settle out in the square case (i.e., when $m=n$ and so $s=k$)
and in the tight case (i.e., when $k=m$ and $s=n$), by
Khodkar, Schulz and Wagner.

\begin{thm}\cite{magicDM}\label{square}
There exists an $\SMA(n,n;k,k)$ if and only if either $n=k=1$ or $3\leq k\leq n$.
\end{thm}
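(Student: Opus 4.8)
The plan is to prove both directions, treating necessity by a short structural argument and sufficiency by an explicit diagonal construction built from ``shiftable'' blocks.

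For necessity, note first that a column with $k$ filled cells among $n$ rows forces $k\le n$. The values $k=1$ and $k=2$ must be ruled out beyond $n=k=1$. If $k=1$, the filled cells form a perfect matching between rows and columns, so the unique entry of each row equals its row sum, namely $0$; since $0$ occurs at most once in $\Omega$, this needs $n\le 1$, giving $n=k=1$. If $k=2$, the filled cells form a $2$-regular bipartite graph on rows and columns, hence a disjoint union of even cycles of length $\ge 4$. Going around such a cycle, consecutive cells share alternately a row and a column, and condition (c) forces their values to alternate between $+a$ and $-a$ for a single magnitude $a$; thus every cell of the cycle has absolute value $a$. A cycle of length $2\ell\ge 4$ then uses the magnitude $a$ at least twice with the same sign, contradicting condition (b). Hence no $\SMA(n,n;2,2)$ exists for $n\ge 2$.

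For sufficiency I would place all entries on $k$ broken diagonals of the torus $\Z_n\times\Z_n$: for a set $D\subseteq\Z_n$ of $k$ distinct offsets, fill the cells $(i,i+d)$ with $i\in\Z_n$ and $d\in D$. This support is automatically $k$-regular, and $k$ distinct offsets exist precisely when $k\le n$, which is where the hypothesis enters. The engine of the construction is a \emph{shiftable} $4$-diagonal block: choose two ``positive'' offsets $d_1,d_2$ and two ``negative'' offsets $d_1+c,d_2+c$, let $(a_i)_{i\in\Z_n}$ be a permutation of $\{1,\dots,n\}$, and at row $i$ place $+a_i$ on $d_1$, $-a_{i+c}$ on $d_1+c$, $+(2n+1-a_i)$ on $d_2$, and $-(2n+1-a_{i+c})$ on $d_2+c$. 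Each column then cancels pairwise after the shift by $c$, while each row sums to $(a_i+(2n+1-a_i))-(a_{i+c}+(2n+1-a_{i+c}))=0$; the block realizes exactly the values $\{\pm1,\dots,\pm 2n\}$ and, having two positive and two negative entries per line, stays zero-sum after adding any constant to all magnitudes. Stacking $t$ such blocks on $4t$ disjoint diagonals with the consecutive magnitude windows $\{1,\dots,2n\},\{2n+1,\dots,4n\},\dots$ produces an $\SMA(n,n;4t,4t)$ whose value set is exactly $\Omega$.

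This reduces the problem to the four base cases $k\in\{3,4,5,6\}$, one per residue after peeling off shiftable $4$-blocks, which one then augments by $t$ blocks on the remaining diagonals to reach every $k\le n$. The main obstacle is precisely these bases. For the even base $k=6$ the analogous $6$-diagonal block needs three magnitude sequences $a_i,b_i,e_i$ with $a_i+b_i+e_i$ constant and $\{a_i\}\cup\{b_i\}\cup\{e_i\}=\{1,\dots,3n\}$, i.e.\ a magic rectangle $\MR(3,n)$; since $\MR(3,n)$ exists only for $n$ odd, the case $k\equiv2\pmod4$ with $n$ even requires a different, parity-sensitive arrangement, and this is the heart of the casework, mirroring the parity hypotheses appearing elsewhere in the paper. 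For the odd bases $k\in\{3,5\}$ the lines have an odd number of entries and cannot be balanced in sign, so no shiftable trick applies; here I would build the three (resp.\ five) diagonals directly via a Skolem/Langford-type pairing, also accommodating the single entry $0$ when $nk$ is odd. Once the bases are in hand, the assembly and the verification that the entries form exactly $\Omega$ with all line sums $0$ are routine.
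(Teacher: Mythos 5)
Your proposal addresses a statement that the paper itself does not prove: Theorem \ref{square} is quoted from the reference \cite{magicDM} (Khodkar--Schulz--Wagner), so there is no internal proof to compare against; what follows is an assessment of your argument on its own terms.

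The parts you actually carry out are correct. The necessity direction is complete: $k\le n$ is forced by counting, $k=1$ forces every entry to be $0$, and for $k=2$ the filled cells decompose into cycles of length at least $4$ on which the magnitudes are constant, so some value of $\Omega$ repeats, violating condition (b). The shiftable four-diagonal block is also sound: your row and column cancellations check out, the block uses $\{\pm 1,\ldots,\pm 2n\}$ exactly once each, and stacking shifted copies on disjoint diagonals gives $\SMA(n,n;4t,4t)$ whenever $4t\le n$. This is very much in the spirit of the constructions in the present paper (shiftable blocks placed on diagonals, then translated through consecutive magnitude windows).

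However, there is a genuine gap: the entire theorem reduces, in your own architecture, to the base cases $k\in\{3,5,6\}$, and for precisely these you offer no construction. For $k=6$ you correctly identify that your natural six-diagonal analogue requires a $3\times n$ array with entries $1,\ldots,3n$ and constant column sums, which exists only for $n$ odd (the column sum $3(3n+1)/2$ is not an integer for $n$ even), and you then defer the case $n$ even as ``a different, parity-sensitive arrangement'' without producing it. For $k\in\{3,5\}$ the rows have an odd number of entries, so no sign-balanced block can work, and the promised ``Skolem/Langford-type pairing'' (including the placement of the single entry $0$ when $nk$ is odd) is never exhibited. These are exactly the hard cases of the Khodkar--Schulz--Wagner theorem -- the reason their proof is long and case-heavy -- and a proof that defers them proves only the subcase $k\equiv 0\pmod 4$ together with the necessity direction. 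As it stands, the proposal is a correct skeleton plus the easy quarter of the sufficiency, not a proof of the statement.
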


\begin{thm}\cite{magicDM}
There exists an $\SMA(m,n;n,m)$  if and only if one of the following cases occurs:
\begin{itemize}
\item[(1)] $m=n=1$;
\item[(2)] $m=2$ and $n\equiv 0,3 \pmod 4$;
\item[(3)] $n=2$ and $m\equiv 0,3\pmod 4$;
\item[(4)] $m,n>2$.
\end{itemize}
\end{thm}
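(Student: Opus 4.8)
The plan is to prove the two directions separately, handling necessity by a short case analysis and sufficiency by reducing to the square case and to magic rectangles wherever possible, leaving one genuinely rectangular family as the core construction.

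For necessity I would first note that $s=n$ and $k=m$ force every cell to be filled. If $\min\{m,n\}=1$, say $m=1$, then each column is a single cell which by (c) must equal $0$; by (b) this is consistent only when $|\Omega|=1$, i.e. $(m,n)=(1,1)$. If $m=2$ (the case $n=2$ being the transpose), then $mn$ is even, $\Omega=\{\pm1,\dots,\pm n\}$, and each of the $n$ columns has two cells summing to $0$, hence is a pair $\{v,-v\}$; since the $n$ pairs exhaust $\Omega$, the top row is $\{\sigma_v v: 1\le v\le n\}$ for signs $\sigma_v\in\{\pm1\}$, and (c) forces $\sum_{v=1}^{n}\sigma_v v=0$. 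Such a signing exists iff $\tfrac{n(n+1)}{2}$ is even, i.e. iff $n\equiv0,3\pmod4$. This yields cases (2) and (3) and, in particular, rules out $(m,n)=(2,2)$; the four listed cases are therefore exactly the admissible ones.

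For sufficiency I would dispose of the small and symmetric cases first. The single cell $0$ settles $(1,1)$; for $m=2$, $n\equiv0,3\pmod4$ the necessity argument reverses (choose a signing with $\sum_v\sigma_v v=0$, place $\sigma_v v$ on top and $-\sigma_v v$ below), and $n=2$ is the transpose. When $m=n>2$ the array is square and Theorem~\ref{square} (with $k=n\ge3$) already furnishes an $\SMA(n,n;n,n)$. There remain the genuinely rectangular instances $m\ne n$ with $m,n>2$, which I would split by parity. If $m$ and $n$ are both odd, then $mn$ is odd and I would start from a magic rectangle on $\{1,\dots,mn\}$ (constant row and column sums, which exists classically for equal-parity integers $>1$) and subtract the central value $\tfrac{mn+1}{2}$ from every entry: this integer shift turns the entry set into $\Omega=\{0,\pm1,\dots,\pm\tfrac{mn-1}{2}\}$ and makes every row and column sum vanish.

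The remaining case, $m\ne n$ with $mn$ even, is the heart of the matter, since centering now produces half-integers and $0\notin\Omega$, so the magic-rectangle reduction is unavailable; moreover the naive idea of tiling by balanced $2\times n$ strips already fails for $(m,n)=(4,3)$ on parity grounds. Here I would build the array directly, growing it from a small base block by repeatedly appending a pair of columns (or rows) that is \emph{row-balanced} — each affected row receiving a twin pair $\{v,-v\}$, so row sums are preserved — and \emph{column-balanced} — the freshly used magnitudes being signed so each new column sums to $0$; iterating this reduces every instance to a finite list of explicitly constructed base cases. The principal obstacle is exactly this value bookkeeping: one must distribute the pairs $\{\pm1,\dots,\pm\tfrac{mn}{2}\}$ among the successive blocks so that every local zero-sum condition is simultaneously satisfiable \emph{and} each magnitude is used precisely once. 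As the $(4,3)$ example shows, this cannot be met by any rigid block pattern and forces a careful, parity-sensitive assignment in the base cases, which I expect to be the most delicate part of the argument.
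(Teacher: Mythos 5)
First, a point of comparison: the paper itself contains no proof of this statement --- it is quoted from \cite{magicDM} (Khodkar, Schulz and Wagner) --- so your proposal can only be judged on its own merits, not against an internal argument.

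Your necessity direction is correct: the $m=1$ analysis, the pairing argument for $m=2$ (each column is $\{v,-v\}$, so the top row is a signing $\sum_v \sigma_v v=0$, forcing $n(n+1)/2$ even, i.e.\ $n\equiv 0,3\pmod 4$), and transposition for $n=2$ together account for all excluded cases, including $(2,2)$. On sufficiency, the cases $(1,1)$, $m=2$, $n=2$, the square case via Theorem \ref{square}, and the case $m\neq n$ with $m,n$ both odd (centering a classical Harmuth magic rectangle by subtracting $(mn+1)/2$, which indeed exists since $m\equiv n\pmod 2$ and $m,n>2$) are all handled correctly.

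The genuine gap is the case $m,n>2$, $m\neq n$, $mn$ even, which is the heart of the theorem and where \cite{magicDM} spends essentially all of its effort. For this case you offer only a strategy --- grow the array by appending row-balanced and column-balanced column pairs, reducing to ``a finite list of explicitly constructed base cases'' --- but no base case is exhibited, no induction step is verified, and no argument is given that the required distribution of the magnitudes $1,\ldots,mn/2$ among the blocks can always be completed. You yourself flag this bookkeeping as the most delicate part, which is an accurate self-assessment: it is exactly the missing content. Note also that this case splits into two genuinely different situations, namely $m\not\equiv n\pmod 2$ (where no classical fully-filled magic rectangle exists at all, so there is nothing to perturb) and $m\equiv n\equiv 0\pmod 2$ (where centering produces half-integers), and each needs its own construction. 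Your observation that $(m,n)=(4,3)$ cannot be tiled by $2\times 3$ zero-sum strips --- the total $1+\cdots+6=21$ is odd while every signed-zero triple has even sum --- correctly diagnoses why a rigid pattern fails, but diagnosing the obstruction does not replace the construction that circumvents it. As written, the proposal proves the theorem only when $mn$ is odd, or $m=n$, or $\min(m,n)\leq 2$.
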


Also the cases when each column contains $2$ or $3$ filled cells have been solved.

\begin{thm}\cite{magic2}
There exists an $\SMA(m,n;s,2)$  if and only if one of the following cases occurs:
\begin{itemize}
\item[(1)] $m=2$ and $n=s\equiv 0,3 \pmod 4$;
\item[(2)] $m,s>2$ and $ms=2n$.
\end{itemize}
\end{thm}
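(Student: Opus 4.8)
The plan is to treat the two directions separately, using throughout the elementary observation that a column with exactly two filled cells summing to $0$ must consist of a pair $\{+x,-x\}$.

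\emph{Necessity.} First I would record the counting identity $ms=nk=2n$, so that $ms$ is even and $\Omega=\{\pm1,\dots,\pm n\}$. Since each of the $n$ columns holds one pair $\{+x,-x\}$ and every element of $\Omega$ occurs once, the absolute values appearing across the columns form a permutation of $\{1,\dots,n\}$. Feasibility of the filled pattern (an $m\times n$ $0$--$1$ matrix with row sums $s$ and column sums $2$) forces $m\ge2$ and $s\le n$; the case $m=1$ gives $s=2n>n$ and is impossible, while $s=1$ is impossible since a single cell cannot sum to $0$. If $m=2$, then $s=n$, every cell is filled, and column $j$ contains $\epsilon_j x_j$ and $-\epsilon_j x_j$ with $\{x_j\}=\{1,\dots,n\}$, so the row condition reduces to finding signs with $\sum_{a=1}^{n}\epsilon_a a=0$; this is solvable exactly when $n(n+1)/2$ is even, i.e. when $n\equiv0,3\pmod4$, yielding case~$(1)$. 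If $m\ge3$ I would rule out $s\le2$: $s=1$ is excluded as above, and $s=2$ forces $m=n$, i.e. an $\SMA(n,n;2,2)$, which does not exist by Theorem~\ref{square}; hence $s\ge3$, which is case~$(2)$.

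\emph{Sufficiency, reduction.} The key step is to recast the construction as a signed placement problem. Because each value $x\in\{1,\dots,n\}$ occurs once as $+x$ and once as $-x$ in two distinct rows (the two cells of its column), an $\SMA(m,n;s,2)$ is equivalent to a partition of $\{\pm1,\dots,\pm n\}$ into rows $R_1,\dots,R_m$, each of size $s$, such that (i) every $R_r$ sums to $0$ and (ii) $+x$ and $-x$ never lie in the same $R_r$ --- condition (ii) being precisely what allows $+x$ and $-x$ to be assigned to a common column. Equivalently one seeks an $s$-regular loopless multigraph on $m$ vertices (the rows) whose $n$ edges are bijectively labelled by $1,\dots,n$ and oriented so that at every vertex the labels of the in-edges and of the out-edges have equal total.

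\emph{Sufficiency, construction.} For case~$(1)$ an explicit signing with $\sum_{a}\epsilon_a a=0$ (available exactly when $n\equiv0,3\pmod4$) gives the two rows directly. For case~$(2)$ I would build the array from \emph{blocks} on contiguous ranges of values and then juxtapose them horizontally: placing an $m\times n_1$ solution for row-size $s_1$ next to an $m\times n_2$ solution for row-size $s_2$ (on the next range of values) yields an $m\times(n_1+n_2)$ solution for row-size $s_1+s_2$, since column pairs and zero row-sums are preserved and condition (ii) is local to each block. As every integer $s\ge3$ (when $m$ is even) and every even $s\ge4$ (when $m$ is odd, where $s$ must be even) is a nonnegative combination of the small sizes $3,4,5$ (respectively $4,6$), it suffices to construct these few base blocks for each admissible $m$; for example, when $m=4$ a size-$3$ block is realised by a balanced labelling of $K_4$ by $\{1,\dots,6\}$. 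The main obstacle is exactly this uniform construction of the base blocks for all $m\ge3$: one must exhibit explicit patterns, treating the residues of $m$ separately and respecting the integrality $n_i=ms_i/2$, that realise row-size $s_i\in\{3,4,5,6\}$ on a contiguous value range while keeping each $+x$ and $-x$ in different rows. Once these finitely many block families are established, juxtaposition together with the $m=2$ construction covers every admissible parameter set.
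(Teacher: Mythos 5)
First, a remark on scope: the paper itself does not prove this statement; it is quoted from \cite{magic2} as a known result, so there is no internal proof to compare your attempt against, and it must be judged on its own merits. Your necessity direction is essentially correct and complete: the counting identity $ms=2n$, the observation that each column is a pair $\{+x,-x\}$, the parity obstruction $\sum_a \epsilon_a a \equiv n(n+1)/2 \pmod 2$ in the case $m=2$, and the exclusion of $s=2$ for $m\geq 3$ via Theorem \ref{square} are all sound. The reformulation of an $\SMA(m,n;s,2)$ as a partition of $\{\pm 1,\ldots,\pm n\}$ into $m$ zero-sum rows of size $s$ avoiding $\{+x,-x\}$ in a common row is also correct.

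The sufficiency direction, however, has a genuine gap, and it is not only that the base blocks are left unconstructed (which you acknowledge as ``the main obstacle''): the decomposition scheme itself cannot be completed as you state it. Suppose a block of odd row size $s_i\in\{3,5\}$ is placed on the value range $[a+1,a+n_i]$. Each of its rows has $p$ positive and $q$ negative entries with $p+q=s_i$ odd, hence $p\neq q$, say $p\geq q+1$; the row sum is then at least $p(a+1)-q(a+n_i)\geq a+p-qn_i$, which is strictly positive as soon as $a>qn_i-p$. So odd-size blocks simply do not exist on high value ranges; they can only occupy an initial segment of $[1,n]$. Concretely, for $m=4$ and $s=9$ your decomposition $9=3+3+3$ would require $3$-blocks on the ranges $[7,12]$ and $[13,18]$, and no such blocks exist. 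What your plan actually requires is: at most one odd block, placed on the initial range; all remaining blocks of even size $4$ or $6$ with each row containing equally many positive and negative entries (precisely the paper's notion of \emph{shiftable}), so that they survive translation by an arbitrary $a$; a decomposition of every odd $s\geq 3$ as $3$ or $5$ plus a sum of $4$'s and $6$'s; and explicit constructions of all these block families for every $m\geq 3$ (including the genuinely nontrivial odd blocks, e.g.\ an $\SMA(m,3m/2;3,2)$ for even $m$). Since none of this is carried out, the existence half of the theorem remains unproved.
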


\begin{thm}\cite{magic3}
There exists an $\SMA(m,n;s,3)$  if and only if $3\leq m,s\leq n$ and $ms=3n$.
\end{thm}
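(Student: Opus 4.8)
\emph{Necessity.} Counting the filled cells by rows and by columns gives $ms = nk = 3n$. Each column has three filled cells in three distinct rows, so $m \geq 3$; each row has $s$ filled cells in distinct columns, so $s \leq n$. Moreover $s \geq 2$, since a row with a single filled cell would force that entry to be $0$, which occurs at most once in $\Omega$ and so cannot serve every row. The value $s = 2$ is not really new here: transposing an $\SMA(m,n;s,3)$ yields an $\SMA(n,m;3,s)$, so the case $s = 2$ is an $\SMA(n,m;3,2)$ and is already covered by the previously cited two-per-column theorem. Hence for the construction I would assume $3 \leq s \leq n$, $m \geq 3$ and $ms = 3n$.

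\emph{Reformulation and the odd case.} Since every column holds exactly three cells, the column data of an $\SMA(m,n;s,3)$ is precisely a partition of $\Omega$ into $n$ \emph{zero-sum triples} $\{a_j,b_j,c_j\}$ with $a_j+b_j+c_j=0$; the array is then this partition together with an assignment of the three members of each triple to three distinct rows so that every row receives exactly $s$ elements summing to $0$. An $(s,3)$-biregular filling pattern exists as soon as $s\le n$ and $m\ge 3$, so the whole problem is to realize the two zero-sum conditions simultaneously while using each element of $\Omega$ once. When $ms$ is odd, $m,s,n$ are all odd and $0\in\Omega$, and I would route through magic rectangles: build an $\MR(m,n;s,3)$ on symbols $\{1,\dots,ms\}$, with constant row sum $R=s(ms+1)/2$ and constant column sum $C=3(ms+1)/2$, and subtract the constant $(ms+1)/2$ from every entry. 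This sends $\{1,\dots,ms\}$ to $\Omega=\{0,\pm1,\dots,\pm(ms-1)/2\}$ and lowers the sums to $R-s(ms+1)/2=0$ and $C-3(ms+1)/2=0$; thus a shifted magic rectangle \emph{is} an $\SMA$, and this case reduces to the (parity-compatible, all-odd) existence of $\MR(m,n;s,3)$.

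\emph{The even case, and the main obstacle.} When $ms$ is even, $\Omega=\{\pm1,\dots,\pm ms/2\}$ contains no $0$, the shift above produces half-integers, and the magic-rectangle reduction breaks down; this is the crux. Here I would construct the array directly, splitting into subcases according to the residues of $s$ and $n$ modulo $4$, in the same spirit as the cited two-per-column result. The engine would be a small library of explicitly written base blocks — short $m\times n_0$ signed arrays whose columns are zero-sum triples and whose rows already sum to $0$ — combined by two operations: juxtaposing blocks that occupy disjoint, suitably translated value ranges (which adds columns while preserving all column sums) and pairing a block with its entrywise negative (which keeps the row sums balanced), with the finitely many boundary values of $(m,n,s)$ settled by hand. \textbf{The hard part} is not any single sum condition but the joint bookkeeping: engineering the base blocks so that the partition of the rows into parts of size $s$ and the partition of $\Omega$ into zero-sum triples are compatible and every element of $\Omega$ is used exactly once. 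That simultaneous compatibility, across all the congruence classes of $s$ and $n$, is where the real difficulty lies.
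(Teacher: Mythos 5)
A preliminary remark: the paper never proves this statement. It is quoted as a known theorem of Khodkar, Leach and Ellis (reference \cite{magic3}), so there is no internal proof to compare yours against; your attempt has to stand on its own, and it does not. The decisive gap is in sufficiency, which is the entire content of the theorem. For $ms$ odd your reduction is sound as far as it goes: subtracting $(ms+1)/2$ from every entry of an $\MR(m,n;s,3)$ on the symbols $\{1,\ldots,ms\}$ does yield an $\SMA(m,n;s,3)$. But you then simply postulate the existence of $\MR(m,n;s,3)$ for all admissible (necessarily all-odd) parameters; that is itself a nontrivial existence theorem, of the same order of difficulty as the statement being proved, and nothing in your text establishes it --- note in particular that the magic rectangles constructed in this paper (Theorem \ref{mainR}) all have $s,k$ even, so they cannot be invoked here. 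For $ms$ even you offer only a plan: a ``library of base blocks,'' juxtaposition along translated value ranges, case analysis modulo $4$. No block is exhibited and no case is carried out; you yourself identify the ``joint bookkeeping'' as the real difficulty and then stop. Naming the hard part is not the same as doing it.

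Your necessity argument also has a genuine flaw at $s=2$. The theorem requires $s\geq 3$, but you prove only $s\geq 2$ and then dismiss $s=2$ by transposing into the two-filled-cells-per-column theorem. That theorem in fact cuts against you: by its case (2), an $\SMA(4,6;3,2)$ exists ($m=4>2$, $s=3>2$, $ms=12=2n$), and its transpose is an $\SMA(6,4;2,3)$ --- an array with three filled cells per column and $ms=3n$, but with $s=2<3$ and $m>n$. So transposition cannot show that $s\geq 3$ is necessary; what it actually shows is that the ``only if'' as quoted must be read together with the standing conventions of \cite{magic3} (under $ms=3n$, requiring $s\geq 3$ is equivalent to requiring $m\leq n$, and the transposed $s=2$ arrays are deliberately left to the $k=2$ paper). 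Your sketch glosses over this point rather than resolving it, so even the ``easy'' direction of the equivalence is not established as stated.
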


In this paper we consider the case when $s$ and $k$ are both even, proving the following result.

\begin{thm}\label{main}
Let $s,k$ be two even integers with $s,k\geq 4$.
There exists an $\SMA(m,n;s,k)$ if and only if $4\leq s\leq n$, $4\leq k \leq m$ and $ms=nk$.
\end{thm}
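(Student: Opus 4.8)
The plan is to treat the two implications separately; the forward implication is elementary and the reverse is the substantive construction.

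For necessity, assume an $\SMA(m,n;s,k)$ exists. Counting filled cells row-by-row gives $ms$, column-by-column gives $nk$, so condition (a) forces $ms=nk$. Since each of the $m$ rows occupies $s$ of the $n$ columns we get $s\le n$, and dually $k\le m$; combined with the standing assumption $s,k\ge 4$ this gives exactly the stated constraints.

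For sufficiency I would build the array by superimposing self-balanced blocks. The decisive feature of Definition~\ref{def:Magic} is that every line sum is $0$: if several partial arrays occupy pairwise disjoint cells, use pairwise disjoint value sets, and each already has all of its own row- and column-sums equal to $0$, then their overlay again has all line sums $0$ and uses the union of the value sets. Thus it suffices to tile the $m\times n$ board by sub-boards, fill each with a consecutive symmetric interval $\{\pm a,\pm(a+1),\dots\}$ so that every line of the block sums to $0$, and check that the local degrees add up to $(s,k)$ while the intervals partition $\{\pm1,\dots,\pm ms/2\}$.

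The workhorse block is a two-row array with $s$ filled cells per row: each column receives a pair $\{+v,-v\}$, which kills the two column sums automatically, while the signs of the consecutive values along the top row are chosen so that they telescope to $0$ (for instance the pattern $+,-,-,+$ on four consecutive values $a,a+1,a+2,a+3$), killing the two row sums. Stacking $k/2$ such blocks vertically produces column degree $k$, concatenating copies horizontally with the operation $\con$ produces row degree $s$, and feeding in successive value-intervals keeps the global value set equal to $\{\pm 1,\dots,\pm ms/2\}$; the quotients $\Sh$ and $\Sm$ record how many full blocks fit along each direction and isolate the leftover strip. The argument then splits along the residues highlighted in the abstract, because the telescoping that zeroes a row closes up cleanly when $s\equiv 0\pmod 4$ but needs a corrected pattern when $s\equiv 2\pmod 4$, and symmetrically for $k$ and the columns.

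The main obstacle is the interaction of these constraints at the boundary and in the worst residue class. When the block length does not divide $m$ or $n$ the tiling leaves a strip whose degrees and unused values must be absorbed by a hand-built patch that still respects exact biregularity $(s,k)$, zero line sums, and the consecutive-value requirement at once; and when $s\equiv k\equiv 2\pmod 4$ the plain two-row blocks cannot be made to telescope, so one needs an extra ad hoc block and the parity hypothesis that $m,n$ be even. Verifying that such a patch always exists --- possibly seeded by the already-known square case (Theorem~\ref{square}) --- is the heart of the proof.
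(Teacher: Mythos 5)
Your necessity paragraph is correct (and is exactly what the paper leaves implicit). The sufficiency half, however, is a plan rather than a proof, and two of its load-bearing steps cannot be completed in the form you describe. First, the tiling scheme: a block-diagonal placement of fully filled $k\times s$ rectangles requires $k\mid m$ and $s\mid n$, which fails already for admissible parameters such as $(m,n,s,k)=(6,12,8,4)$ (the paper's own first example); the leftover is then not a thin boundary strip but a staircase region with mixed prescribed degrees, i.e.\ essentially the original problem again. The paper never tiles by filled rectangles: it places small shiftable blocks along wrap-around diagonals of the $m\times n$ array (the $3\times 2$ blocks $B_{a,b}$ of Section~3) or inside the cyclic base unit $P(\G)$ of Section~4, so the degree conditions hold for \emph{all} admissible $m,n$ and no patching is ever needed. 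Second, for $s\equiv 2\pmod 4$ the sum of $s$ consecutive integers is odd, so \emph{no} assignment of signs whatsoever makes a row of $s$ consecutive values sum to zero; hence within your framework (interval supports, $\pm v$ pairs in each column, every block individually zero-sum) there is no ``corrected pattern'' at all. The paper's remedy is to abandon precisely the superposition principle you rely on: its blocks for this case (conditions \eqref{blocchi} and \eqref{blocchiOLD}, e.g.\ the blocks $V_1,\ldots,V_7$ of Lemma~\ref{2odd}) have \emph{nonzero} individual column sums $\pm\sigma_i$ that cancel only across different blocks after the cyclic arrangement in $P(\G)$. Since your overlay argument demands that each block have all of its own line sums equal to zero, your framework structurally excludes the solution, and you supply no substitute.

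The residue class $s\equiv k\equiv 2\pmod 4$ is also mishandled. Writing $s=2\sigma$ and $k=2\kappa$ with $\sigma,\kappa$ odd, $ms=nk$ gives $m\sigma=n\kappa$, so $m$ and $n$ always have the same parity; thus ``the parity hypothesis that $m,n$ be even'' is not available to you --- Theorem~\ref{main} asserts existence for $m,n$ both odd as well, and that is exactly the case not covered by the Heffter-array constructions (Theorem~\ref{mainH} excludes it). Your single clause about seeding a patch with Theorem~\ref{square} points in the right direction, but the argument has real content you do not supply: assuming $m\geq n$ (so $s\leq k$), the paper places an $\SMA(n,n;s,s)$ from Theorem~\ref{square} on top of $A_2\pm ns/2$, where $A_2$ is a \emph{shiftable} $\SMA(m-n,n;s,k-s)$ given by Proposition~\ref{prop s2}; this is legitimate because $k-s\equiv 0\pmod 4$, $k-s\geq 4$, and $m-n\geq 2$ is even, and shiftability is precisely what guarantees that adding $ns/2$ to positive entries and $-ns/2$ to negative entries preserves all zero line sums, after which the two entry sets partition $\{\pm 1,\ldots,\pm ms/2\}$. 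The existence of this shiftable complement, the role of shiftability under the value shift, and the parity bookkeeping are all absent from your proposal; as you concede yourself, what you call ``the heart of the proof'' is exactly what is missing.
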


This result will be obtained working in the more general context of the integer $\lambda$-fold relative Heffter arrays.

In \cite{A} Dan Archdeacon introduced an important class of p.f.  arrays, called
\emph{Heffter arrays}. One of the applications of these objects is that they allow, under suitable conditions,
to construct pairs of cyclic cycle decompositions of the complete graph $K_v$ on $v$ vertices.
With the aim to extend this application to complete multipartite graphs, in \cite{RelH} the authors of the present paper,
in collaboration with Costa and Pasotti, proposed a first generalization of Archdeacon's idea introducing p.f. arrays called
\emph{relative Heffter arrays}. A further generalization, that allows to work with complete multipartite multigraphs,
was introduced in \cite{SA} by Costa and Pasotti. These new objects are called \emph{$\lambda$-fold relative Heffter arrays}.
We recall here their definition, where we denote by $\E(A)$ the \emph{list} of the entries of the filled cells of a
p.f. array $A$.

\begin{defi}\label{LambdaH}
Let $m,n,s,k,t,\lambda$ be positive integers such that $\lambda$ divides $2ms$ and $t$ divides $\frac{2ms}{\lambda}$.
Let $J$ be the subgroup of order $t$ of $\Z_v$, where $v=\frac{2ms}{\lambda}+t$.
A \emph{$\lambda$-fold Heffter array} over $\Z_v$ relative to $J$, denoted by ${}^\lambda \H_t(m,n;s,k)$, is
an $m\times n$ p.f. array $A$ with elements in $\Omega=\Z_v\setminus J$ such that:
\begin{itemize}
\item[{\rm (a)}] each row contains $s$ filled cells and each column contains $k$ filled cells;
\item[{\rm (b)}] every element of $\Omega$ appears exactly $\lambda$ times in the list $\E(A)\cup -\E(A)$;
\item[{\rm (c)}] the elements in every row and column sum to $0$.
\end{itemize}
\end{defi}

Item (b) of the previous definition requires some explications.
The additive group $\Z_v$ contains an involution if and only if $v$ is even: in this case,  the unique involution
$\iota\in \Z_v$ belongs to $\Omega$ if and only if $t$ is odd.
We observe that the assumption $v$ even and  $t$ odd implies that $\lambda$ is even and does not divide $ms$.
So, we can write (b) as follows:
if $\Omega$ does not contain involutions, every $x \in \Omega$ appears in $A$, up to sign, exactly $\lambda$ times;
if $\Omega$ contains the  involution $\iota$, then
every $x \in \Omega\setminus \{\iota\}$ appears, up to sign, exactly $\lambda$ times, while
$\iota$ appears exactly $\frac{\lambda}{2}$ times.

Instead of working in a finite cyclic group, one can construct $\lambda$-fold relative
Heffter arrays whose entries are rational integers. In this case, the previous definition becomes as follows.

\begin{defi}\label{LambdaHInt}
Let $m,n,s,k,t,\lambda$ be positive integers such that $\lambda$ divides $2ms$ and $t$ divides $\frac{2ms}{\lambda}$.
Let 
$$\Phi=\left\{1,2,\ldots,\left\lfloor\frac{v}{2} \right\rfloor   \right\}\setminus \left\{\ell, 2\ell,\ldots,
\left\lfloor \frac{t}{2}\right\rfloor \ell \right \} \subset \Z,\quad \textrm{where } 
v=\frac{2ms}{\lambda}+t\equad \ell=\frac{v}{t}.$$ 
An \emph{integer} ${}^\lambda \H_t(m,n;s,k)$ is
an $m\times n$ p.f. array with elements in $\Phi$ such that:
\begin{itemize}
\item[{\rm(a)}] each row contains $s$ filled cells and each column contains $k$ filled cells;
\item[{\rm(b)}] if  $v$ is odd or if  $t$ is even,  every element of $\Phi$ appears, up to sign, exactly 
$\lambda$ times in the array; if $v$ is even and  $t$ is odd, every element of $\Phi\setminus \{\frac{v}{2}\}$
appears, up to sign, exactly 
$\lambda$ times while $\frac{v}{2}$ appears, up to sign, exactly $\frac{\lambda}{2}$ times;
\item[{\rm(c)}] the elements in every row and column sum to $0$.
\end{itemize}
\end{defi}

Observe that when $\lambda=1$ one retrieves the concept of (integer) relative Heffter array. In particular, an
(integer) ${}^1\H_1(m,n;s,k)$ is exactly a classical (integer) Heffter array, as defined by Archdeacon.
The problem of the existence of \emph{square} classical Heffter arrays has been completely solved
in \cite{ADDY,DW} for the integer case, and in \cite{CDDY} for the general case. For the other cases (non-square or relative),
partial results have been obtained in \cite{ABD,RelHBiem,MP}.
Applications of (relative) Heffter arrays to graph decompositions and biembeddings are described, for instance, in
\cite{BCDY,CDYBiem,CMPPHeffter,DM}.

Here, we prove the following result.

\begin{thm}\label{mainH}
Let $m,n,s,k$ be  integers such that $4\leq s\leq n$, $4\leq k \leq m$ and $ms=nk$.
Let $\lambda$ be a divisor of $2ms$ and let $t$ be a divisor of  $\frac{2ms}{\lambda}$.
There exists an integer ${}^\lambda\H_t(m,n;s,k)$ in each of the following cases:
\begin{itemize}
\item[$(1)$] $s,k \equiv 0 \pmod 4$;
\item[$(2)$] $s\equiv 2\pmod 4$ and $k\equiv 0 \pmod 4$;
\item[$(3)$] $s\equiv 0\pmod 4$ and $k\equiv 2 \pmod 4$;
\item[$(4)$] $s,k\equiv 2 \pmod 4$ and  $m,n$  both even. 
\end{itemize}
\end{thm}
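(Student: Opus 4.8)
The plan is to prove the sufficiency direction; the constraints $4\le s\le n$, $4\le k\le m$, $ms=nk$ together with the divisibility hypotheses are exactly what make $\Phi$ well defined, with $|\Phi|=ms/\lambda$, so that using each element of $\Phi$ (up to sign) $\lambda$ times — and $v/2$ only $\lambda/2$ times when $v$ is even and $t$ is odd — fills all $ms$ cells. I would first record the arithmetic constraint behind the case split. If a row with entries $e_1,\dots,e_s$ sums to $0$ then, writing $P$ and $M$ for the sums of its positive entries and of the absolute values of its negative entries, $P=M$, so the unsigned row sum $\sum_i|e_i|=2P$ is even; the same holds for columns, and hence the total unsigned weight of the array must be even. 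For $\lambda=t=1$ this total is $ms(ms+1)/2$, whose evenness (given that $ms$ is even) is equivalent to $ms\equiv 0\pmod 4$, and one checks directly that the four hypotheses $(1)$--$(4)$ are together equivalent to ``$s,k$ even and $ms\equiv 0\pmod4$'': cases $(1)$--$(3)$ force a factor $4$ into $s$ or $k$, hence into $ms=nk$, while in case $(4)$ it is the evenness of $m,n$ that supplies it. This is the clean sufficient condition I aim to realise, and it tells me the construction must split the entries of every row and every column into two halves of equal sum.

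For the construction I would follow the route announced by the title and pass through a magic rectangle. First I would fix a support: a ``diagonal band'' pattern in which cell $(i,j)$ is filled according to a cyclic rule depending on $j-i$, chosen so that each row meets exactly $s$ cells and each column exactly $k$ cells (the compatibility $ms=nk$ guarantees such a pattern exists). On this support I would build an $\MR(m,n;s,k)$, i.e.\ an unsigned filling by the multiset consisting of each element of $\Phi$ taken $\lambda$ times, arranged so that all row sums coincide and all column sums coincide; here I would exploit the additive regularity of $\Phi$, whose gaps sit exactly at the multiples of $\ell=v/t$ (the subgroup $J$), and, when $s$ or $k$ is large, assemble the rectangle from smaller balanced blocks by juxtaposition, partitioning $\Phi$ accordingly. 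The integer ${}^\lambda\H_t(m,n;s,k)$ is then obtained from this magic rectangle by a $\pm$--signing: I would design the rectangle so that the $s$ (resp.\ $k$) entries in each row (resp.\ column) split into pairs of equal sum, after which the canonical signing $+,-$ on each pair makes every row and column sum $0$ while preserving each absolute value, hence the prescribed multiplicity of each element of $\Phi$. Specialising to $\lambda=2$, $t=1$ gives $\Phi=\{1,\dots,ms/2\}$ with every value used twice up to sign; arranging the signing so that each value receives one $+$ and one $-$ then yields an $\SMA(m,n;s,k)$ and recovers the corresponding cases of Theorem~\ref{main}.

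The main obstacle is precisely the passage from the magic rectangle to the signed array: I must orient the $\pm$--signs so that the rows and the columns cancel \emph{simultaneously}, using each member of $\Phi$ exactly $\lambda$ times (and $v/2$ only $\lambda/2$ times in the involutory case $v$ even, $t$ odd, which I would seed separately). Balancing a single direction is routine once the unsigned sums are even, but a signing that is balanced in both directions at once is a genuine constraint, and this is where I expect the residue bookkeeping of cases $(1)$--$(4)$ to be used in full: the entries must be distributed so that the row-pairs and column-pairs close up consistently, with case $(4)$ requiring $m,n$ even exactly to make the pairing close. Once the signing is fixed, verifying (a)--(c) and the correct occurrence count in $\E(A)\cup-\E(A)$ is a direct, if lengthy, check, to be carried out case by case and, for large $s,k$, reduced to the base blocks by the juxtaposition step.
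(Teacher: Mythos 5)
Your proposal has a genuine gap at exactly the point you yourself flag as ``the main obstacle'': the passage from an unsigned magic rectangle to a zero-sum signing. Everything before that point -- the parity bookkeeping, the (correct) observation that cases $(1)$--$(4)$ together amount to ``$s,k$ even and $ms\equiv 0\pmod 4$'', the diagonal-band support, the juxtaposition of blocks -- is sound but routine; the entire difficulty of the theorem lives in the step you defer. A signing of a filled array that makes every row and every column sum to zero \emph{simultaneously} is equivalent to choosing, in all $m+n$ lines at once, a subset of cells whose values carry exactly half the unsigned weight of that line, where each cell is coupled to two such constraints. You give no mechanism for solving this coupled system, only the expectation that ``residue bookkeeping'' will make it close up; that expectation is precisely what a proof would have to substantiate, and nothing in your set-up (the band support, the additive structure of $\Phi$) visibly forces it. Your ``pairs of equal sum'' device also does not work as stated: signing a pair $(a,b)$ as $+a,-b$ contributes $a-b$, which vanishes only when $a=b$, so pairing alone cannot produce zero line sums; what is needed is an equal-sum bipartition of every row and every column, which is the same unsolved constraint again. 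The multiplicity adjustment in the involutory case ($v$ even, $t$ odd, where $\frac{v}{2}$ must appear only $\frac{\lambda}{2}$ times) is likewise waved at rather than handled.

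The paper avoids this issue by never passing through an unsigned object: the signs are built in from the start. The basic bricks are small \emph{shiftable signed} blocks -- the $3\times 2$ blocks $B_{a,b}$ of Section \ref{s0k0}, and the $2\times s$ blocks of the ``nice pairs'' of Section \ref{s2} -- whose row and column sums are zero or cancel telescopically by design of conditions \eqref{blocchi}, \eqref{blocchiOLD} and \eqref{blocchi2}. The whole construction then reduces to choosing shifts $x$ so that $\bigcup_x \supp(B\pm x)=\Phi$ with the right multiplicities, a purely one-dimensional arithmetic covering problem solved case by case in $\lambda$ and $t$ (Lemmas \ref{lambda4}--\ref{lambda ms} for case $(1)$, Propositions \ref{nice}, \ref{nice 2ms}, \ref{prop s2} and \ref{sk2} for the others). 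Note also that your reduction runs in the opposite direction to the paper's: there, magic rectangles are a \emph{corollary} of the signed constructions -- in the proof of Theorem \ref{mainR} a shiftable $\SMA(m,n;s,k)$ becomes an $\MR(m,n;s,k)$ by sending each negative entry $x\mapsto x+\frac{ms}{2}$ and each positive entry $y\mapsto y+\frac{ms}{2}-1$ -- precisely because going from signed-and-shiftable to unsigned is easy, while going from unsigned to signed is the hard coupled problem your proposal leaves open.
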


Looking at Definitions \ref{def:Magic} and \ref{LambdaHInt} the reader can easily see that, when $ms$ is even,
 a signed magic array is a particular integer $2$-fold relative Heffter array.
In fact, the integer ${}^2\H_1(m,n;s,k)$ we construct in the following sections is actually a signed magic array
$\SMA(m,n;s,k)$.
So, Theorem \ref{main} will follow from Theorem \ref{mainH}, except when $s,k\equiv 2\pmod 4$ and $m,n$ are odd.
Nevertheless, for these exceptional values, we will construct $\SMA(m,n;s,k)$ starting
from \emph{square} signed magic arrays, whose existence is assured by Theorem \ref{square},
and exploiting the flexibility of our constructions.

Our results on signed magic arrays allow us to build also magic rectangles.

\begin{defi}\label{def:rect}
A \emph{magic rectangle} $\MR(m,n; s,k)$ is an $m\times n$ p.f.  array
with elements in $\Omega=\{0,1,\ldots,ms-1\}\subset \Z$ such that
\begin{itemize}
\item[($\rm{a})$] each row contains $s$ filled cells and each column contains $k$ filled cells;
\item[($\rm{b})$] every $x \in \Omega$ appears exactly once in the array;
\item[($\rm{c})$] the sum of the elements in each row is a constant value $c_1$  and
the sum of the elements in each column is a constant value $c_2$.
\end{itemize}
\end{defi}
Clearly, in the previous definition we must have $c_1=\frac{s(ms-1)}{2}$ and $c_2=\frac{k(ms-1)}{2}$.
The reader can find results on the existence of these objects in \cite{rect3,rect1} and in the references within.
Here, we prove the following.

\begin{thm}\label{mainR}
Let $m,n,s,k$ be integers such that $4\leq s\leq n$, $4\leq k \leq m$ and $ms=nk$.
There exists an $\MR(m,n;s,k)$ in each of the following cases:
\begin{itemize}
\item[$(1)$] $s,k \equiv 0 \pmod 4$;
\item[$(2)$] $s\equiv 2\pmod 4$ and $k\equiv 0 \pmod 4$;
\item[$(3)$] $s\equiv 0\pmod 4$ and $k\equiv 2 \pmod 4$;
\item[$(4)$] $s,k\equiv 2 \pmod 4$ and  $m,n$  both even. 
\end{itemize}
\end{thm}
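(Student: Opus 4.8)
The plan is to obtain the magic rectangles from the signed magic arrays of Theorem~\ref{mainH} by a sign-dependent shift. Put $N=ms/2$ (note that $ms$ is even since $s$ is), so that an $\SMA(m,n;s,k)$ has its filled cells occupied by $\{\pm1,\ldots,\pm N\}$, each value once. Replace each positive entry $a$ by $a+N-1$ and each negative entry $a$ by $a+N$. This maps $\{1,\ldots,N\}$ onto $\{N,\ldots,2N-1\}$ and $\{-N,\ldots,-1\}$ onto $\{0,\ldots,N-1\}$, hence bijectively onto $\Omega=\{0,1,\ldots,ms-1\}$, without moving any cell; so conditions (a) and (b) of Definition~\ref{def:rect} are inherited from the $\SMA$. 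If a row of the $\SMA$ sums to $0$ and has $p$ positive entries, its shifted sum equals $sN-p$, and a column with $q$ positive entries acquires sum $kN-q$. Since the prescribed constants are $c_1=s(ms-1)/2=sN-\tfrac{s}{2}$ and $c_2=k(ms-1)/2=kN-\tfrac{k}{2}$, the shifted array is an $\MR(m,n;s,k)$ exactly when every row of the $\SMA$ has $s/2$ positive entries and every column has $k/2$ positive entries.

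Call an $\SMA$ with this sign distribution \emph{balanced}; as $s,k$ are even the numbers $s/2,k/2$ are integers, and $m\cdot\tfrac{s}{2}=n\cdot\tfrac{k}{2}=N$ is consistent with $ms=nk$, so there is no global obstruction. Theorem~\ref{mainR} thus reduces to exhibiting a balanced $\SMA(m,n;s,k)$ in each of the four cases. For this I would return to the construction behind Theorem~\ref{mainH} in the instance $\lambda=2$, $t=1$, which by the remark following Definition~\ref{LambdaHInt} is already an $\SMA(m,n;s,k)$, and arrange its signs to be balanced. These arrays are built by tiling the support with a few elementary blocks that individually have vanishing row and column sums; what must be added is a choice of signs making each block contribute as many positive as negative entries to each of its rows and columns.

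The crux is precisely this sign-balancing step, and it is what confines the result to the stated four cases. In cases $(1)$--$(3)$ the divisibility of $s$ or $k$ by $4$ lets the entries of each block be grouped into $\pm$ pairs placed symmetrically, so balance holds block by block; in case $(4)$, where $s,k\equiv 2\pmod 4$, the tiling necessarily involves $2\times2$ blocks, and these can be balanced (one positive and one negative entry in each row and column) exactly when $m$ and $n$ are both even, which is the hypothesis imposed there. Balance of a single block is an immediate check, and balance of the full array then follows by summing the per-block counts, so I expect no difficulty beyond bookkeeping once the explicit blocks from the proof of Theorem~\ref{mainH} are in hand. It is worth stressing that balance is strictly stronger than the zero-sum condition of Definition~\ref{def:Magic}, which is why Theorem~\ref{main} alone (valid for all even $s,k$) does not yield magic rectangles for the excluded parameters $s,k\equiv 2\pmod 4$ with $m,n$ odd: there the $\SMA$ is produced from square arrays via Theorem~\ref{square} and need not be balanced.
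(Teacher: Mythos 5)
Your proposal is correct and is essentially the paper's own proof: the sign-dependent shift you describe (negative $x \mapsto x + \frac{ms}{2}$, positive $y \mapsto y + \frac{ms}{2}-1$) is exactly the transformation used there, and your notion of a \emph{balanced} $\SMA$ coincides with the paper's notion of a \emph{shiftable} array. The sign-balancing step you flag as the crux is already built into the paper's constructions, so no extra work is needed: Propositions \ref{prop:k4}, \ref{prop s2} and \ref{sk2} explicitly produce \emph{shiftable} integer ${}^2\H_1(m,n;s,k)$, i.e.\ shiftable $\SMA(m,n;s,k)$, in all four cases, and the paper's proof of Theorem \ref{mainR} simply applies your shift to such an array.
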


\section{Notations}

In this paper,  the arithmetic on the row (respectively, on the column) indices is performed modulo $m$
(respectively, modulo $n$), where the set of reduced residues is $\{1,2,\ldots,m\}$ (respectively,
 $\{1,2,\ldots,n\}$), while the entries of the arrays are taken in $\Z$.
Given two integers $a\leq b$, we denote by $[a,b]$ the interval consisting of the integers $a,a+1,\ldots,b$.
If $a>b$, then $[a,b]$ is empty. We denote by $(i,j)$ the cell in the $i$-th row and $j$-th column
of an array $A$. The \emph{support} of $A$, denoted by $\supp(A)$,
is defined to be the set of the absolute values of the elements contained in $A$.

If $A$ is an $m\times n$ p.f. array, for $i\in[1,n]$ we define the $i$-th diagonal as
$$D_i=\{(1,i),(2,i+1),\ldots,(m,i+m-1)\}.$$

\begin{defi}
A p.f. array with entries in $\Z$ is said to be \emph{shiftable} if
every row and every column contains an equal number of positive and negative entries.
\end{defi}

Let $A$ be a shiftable p.f. array and $x$ be a nonnegative integer.
Let $A\pm x$ be the (shiftable) p.f. array obtained adding $x$ to each positive
entry of $A$ and $-x$   to each negative entry of $A$.
Observe that, since  $A$ is shiftable, the row and column sums of $A\pm x$ are exactly
the row and column sums of $A$.

We denote by $\tau_i(A)$ and $\gamma_j(A)$ the sum of the elements of the $i$-th row and the sum of the
elements of the $j$-th column, respectively, of  a p.f. array $A$.

For a block $B$, we write $\mu(B)=\mu$ if every element of $\supp(B)$
appears, up to sign, exactly $\mu$ times in $\E(B)$.

Given a sequence $S=(B_1,B_2,\ldots,B_r)$ of shiftable p.f. arrays and a nonnegative integer $x$,
we write $S\pm x$ for the sequence $(B_1\pm x, B_2\pm x,\ldots,B_r\pm x)$.
We set $\E(S) = \cup_i \E(B_i)$ and $\supp(S)=\cup_i \supp(B_i)$.
We also write $\mu(S)=\mu$ if $\mu(B_i)=\mu$ for all $i$.

If $S_1=(a_1,a_2,\ldots,a_{r})$ and $S_2=(b_1,b_2,\ldots,b_{u})$ are two sequences,
by $S_1 \con S_2$ we mean
the sequence $(a_1,a_2,\ldots,a_r, b_1, b_2, \ldots, b_u)$ obtained by concatenation
of $S_1$ and $S_2$. In particular, if $S_1$ is the empty sequence then $S_1 \con S_2=S_2$.
Furthermore, given the sequences $S_1,\ldots,S_c$, we write
$\con\limits_{i=1}^c S_i$  for $(\cdots((S_1 \con S_2) \con S_3) \con  \cdots) \con  S_c$.

Given a positive integer $n$ and a sequence $S=(a_1,a_2,\ldots,a_{r})$, we denote
by $n\ast S$ the sequence obtained concatenating $n$ copies of $S$.

Finally, we recall that the support of an integer ${}^\lambda \H_t(m,n;s,k)$ is the set
$$\Phi=\left[1, \left\lfloor\frac{t\ell}{2} \right\rfloor   \right]\setminus \left\{\ell, 2\ell,\ldots,
\left\lfloor \frac{t}{2}\right\rfloor \ell \right \},\quad \textrm{where } \ell=\frac{2ms}{\lambda t}+1=\frac{v}{t}.$$
Note that, if $\lambda$ divides $ms$, then 
$$\Phi=\left[1, \frac{ms}{\lambda}+\left\lfloor\frac{t}{2} \right\rfloor   \right]\setminus \left\{\ell, 2\ell,\ldots,
\left\lfloor \frac{t}{2}\right\rfloor \ell \right \}.$$
Also, every element of $\Phi$ appears in ${}^\lambda \H_t(m,n;s,k)$, up to sign, exactly $\lambda$ times.
If $\lambda$ does not divide $ms$, in order to obtain an integer ${}^\lambda \H_t(m,n;s,k)$, we have to construct a p.f. array $A$ such that
\begin{equation}\label{cond2ms}
\begin{array}{l}
\textrm{if } \ell \textrm{ is odd or if } t \textrm{ is even, every element of }
\Phi \textrm{ appears in } A,  \textrm{up to sign, exactly } \lambda  \\
\textrm{times; otherwise, i.e, if } \ell \textrm{ is even and } t  
\textrm{ is odd, every element of } \Phi \setminus \left\{\frac{t\ell}{2}\right\}\textrm{ appears} \\
\textrm{in } A, 
\textrm{up to sign, exactly } \lambda \textrm{ times, while the integer }
\frac{t\ell}{2} \textrm{ appears, up to sign, } \frac{\lambda}{2} \textrm{ times.}
\end{array}
\end{equation}

\section{The case $s,k\equiv 0 \pmod 4$}\label{s0k0}

In this section we prove the existence of an integer
${}^\lambda\H_t(m,n;s,k)$ when both $s$ and $k$ are divisible by $4$.
First of all, we set
$$d=\gcd(m,n),\quad m=d\bar m,\quad n= d \bar n,\quad s=4 \bar s \equad k=4\bar k.$$
Note that from $ms=nk$ we obtain that $\bar n$ divides $\bar s$ and $\bar m$ divides $\bar k$. Hence, we can write $\bar
s= c \bar n$
and $\bar k=c \bar m$.

Fix two integers $a,b\geq 0$ and consider the following shiftable p.f. array:
$$B=B_{a,b}=\begin{array}{|c|c|}\hline
     1 & -(a+1) \\\hline
      & \\\hline
   -(b+1) & a+b+1\\\hline
    \end{array}\;.$$
Note that the sequences of the  row/column sums are $(-a,a)$ and $(-b,b)$, respectively.
We  use this $3\times 2$ block for constructing  p.f. arrays whose rows and columns sum to zero.
Start taking an empty $m\times n$ array $A$, fix
$m\bar n$ nonnegative integers $y_0, y_1,\ldots,y_{m\bar n-1}$, and arrange the blocks
$B\pm y_j$ in such a way that the element $1+y_j$ fills the cell
$(j+1,j+1)$ of $A$ (recall that we work modulo $m$ on row indices  and modulo $n$ on column indices).
In this way, we fill the diagonals $D_{im-1}, D_{im}, D_{im+1}, D_{im+2}$ with $i\in [1,\bar n]$.
In particular, every row has $4\bar n$ filled cells and every column has $4\bar m$ filled cells.

Looking at the rows, the elements belonging to the
diagonals $D_{im+1},D_{im+2}$ sum to $-a$, while the elements belonging to the diagonals $D_{im-1},D_{im}$ sum to $a$.
Looking at the columns, the elements belonging to the diagonals $D_{im+1},D_{im-1}$
sum to $-b$, while the elements belonging to the diagonals $D_{im+2},D_{im}$
sum to $b$. Then $A$ has row/column sums equal to zero.

Applying this process $c$ times (working with the diagonals $D_{im+3},D_{im+4},$ $D_{im+5},D_{im+6}$, and so on),
we obtain a p.f. array $A$, whose rows have exactly $4\bar n \cdot c=s$ filled cells
and whose columns have exactly $4\bar m \cdot c =k$ filled cells.

\begin{ex}
For $a=2$ and $b=5$, fixing the integers $0,1, 10, 11, 20, 21, 30, 31, 40, 41, 50, 51,$ we can fill the diagonals
$D_1,D_2,D_5, D_6, D_7, D_8, D_{11}, D_{12}$ of the following $6\times 12$ p.f. array,
where we highlighted the block $B_{2,5}$:

\begin{footnotesize}
$$A=\begin{array}{|c|c|c|c|c|c|c|c|c|c|c|c|}\hline
\omb 1  & \omb -3 &      &       &  -26  &   28 &  31  &  -33 &     &      &  -56 & 58   \\\hline
      59 &        2 &   -4 &       &       &  -27 &   29 &  32  & -34 &      &      & -57    \\\hline
\omb -6 &  \omb 8 &   11 &   -13 &       &      & -36  &  38  & 41  &  -43 &      &     \\\hline
         &       -7 &   9  &    12 &  -14  &      &      & -37  & 39  &  42  & -44  &     \\\hline
         &          &  -16 &   18  &    21 &  -23 &      &      & -46 &  48  &  51  & -53 \\\hline
 -54     &          &      &   -17 &    19 &   22 &  -24 &      &     &  -47 &  49  &  52    \\\hline
  \end{array}\;.$$
\end{footnotesize}

\noindent
Note that $\supp(A)=[1,60]\setminus\{5j: j \in [1, 12]\}$.
As the reader can verify, $A$ is an integer ${}^1\H_{24}(6,12;8,4)$:
in this case $\ell=\frac{2\cdot 6\cdot 8}{24}+1=5$.
\end{ex}

The constructions we present in this section are obtained following this procedure,
so they all produce shiftable p.f. arrays of size $m\times n$ whose rows and columns sum to zero.

Here we always assume that
$4\leq s \leq n$, $4\leq k \leq m$, $ms=nk$ and $s,k\equiv 0 \pmod 4$.
Let $\lambda$ be a divisor  of $2ms$ and $t$ be a divisor of $\frac{2ms}{\lambda}$; set
$$\ell=\frac{2ms}{\lambda t}+1.$$

We first consider the case when $\lambda$ divides $ms$.
To obtain an integer ${}^\lambda\H_t(m,n;s,k)$ with $s,k\equiv 0 \pmod 4$, we only have to determine two integers 
$a,b\geq 0$
and a set $X=\left\{x_0,x_1,\ldots,\right.$ $\left. x_{f-1}\right\}\subset \N$ such that $\mu(B_{a,b})=\mu$
divides $\lambda$  and $\bigcup\limits_{x \in X} \supp(B_{a,b}\pm x)=\Phi$.
Note that $f=\frac{ms}{4}\frac{\mu}{\lambda}$. So we can take the sequence $Y= \frac{\lambda}{\mu}\ast \left(x_0,x_1,\ldots,x_{f-1}\right)$. Writing
$Y=(y_0,y_1,\ldots, y_{\frac{ms}{4}-1})$ we construct $A$ using the blocks $B_{a,b}\pm y_j$. In this way, every element of $\supp(A)$ occurs, up the sign, $\lambda$ times in $A$.
For instance, we can arrange the blocks in such a way that the element $1+y_j$ fills
the cell $(j+1,4q_j+j+1)$, where $q_j$ is the quotient of the division of $j$ by $\lcm(m,n)$.

\begin{lem}\label{lambda4}
Let $\lambda$ be a divisor of $ms$ such that
$\lambda\equiv 0 \pmod 4$. There  exists an integer ${}^\lambda\H_t(m,n;s,k)$ for any divisor $t$ of $\frac{2ms}{\lambda}$.
\end{lem}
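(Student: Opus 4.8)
The plan is to specialize the general procedure set up just before the statement to the degenerate block $B_{0,0}$, which is tailor-made for multiplicities divisible by $4$. Taking $a=b=0$ gives the block with entries $1,-1,-1,1$, so that $\supp(B_{0,0})=\{1\}$ and the single support element occurs, up to sign, exactly four times; that is, $\mu(B_{0,0})=4$. Since $\lambda\equiv 0\pmod 4$ by hypothesis, $\mu=4$ divides $\lambda$, which is the first condition the procedure requires.

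For the second condition I would cover $\Phi$ by single-element shifts. Because $\supp(B_{0,0}\pm x)=\{1+x\}$ for every $x\geq 0$, it suffices to set $X=\{y-1 : y\in\Phi\}\subset\N$, so that $\bigcup_{x\in X}\supp(B_{0,0}\pm x)=\{1+x:x\in X\}=\Phi$. Here the hypothesis that $\lambda$ divides $ms$ guarantees $\Phi=[1,\frac{ms}{\lambda}+\lfloor t/2\rfloor]\setminus\{\ell,2\ell,\ldots,\lfloor t/2\rfloor \ell\}$, and one checks immediately that $|X|=|\Phi|=\frac{ms}{\lambda}=f$, in agreement with the identity $f=\frac{ms}{4}\cdot\frac{\mu}{\lambda}$.

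With $a,b$ and $X$ fixed I would then run the construction verbatim: form $Y=\frac{\lambda}{4}\ast(x_0,\ldots,x_{f-1})$ and fill the diagonals with the blocks $B_{0,0}\pm y_j$ according to the stated placement $(j+1,4q_j+j+1)$. The shiftability of $B_{0,0}$ together with the diagonal pattern forces every row and column sum to vanish and yields $s$ filled cells per row and $k$ per column; moreover each value of $\Phi$ is produced by exactly $\frac{\lambda}{4}$ copies of a block, hence appears, up to sign, $4\cdot\frac{\lambda}{4}=\lambda$ times. Finally, since $\lambda$ divides $ms$ we are never in the case ``$v$ even and $t$ odd'', so $\Phi$ contains no involution and condition (b) of Definition \ref{LambdaHInt} reduces precisely to this uniform multiplicity $\lambda$. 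Thus the resulting array is the desired integer ${}^\lambda\H_t(m,n;s,k)$, for every admissible $t$.

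The arithmetic is essentially trivial here precisely because $\mu=4$ lets a single support value be carried four times by one block, so covering $\Phi$ with $\lambda\mid ms$ needs no clever choice of $a,b$. The only point requiring care is organizational rather than arithmetic: one must confirm that the placement of the $\frac{ms}{4}$ blocks, including the $\frac{\lambda}{4}$ repeated copies of each shift, produces no overlapping filled cells and the advertised row/column counts. This is exactly what the placement $(j+1,4q_j+j+1)$ with $q_j=\lfloor j/\lcm(m,n)\rfloor$ is designed to ensure, so I expect the main (and mild) obstacle to be verifying that the diagonals $D_{im-1},D_{im},D_{im+1},D_{im+2}$ and their $c$ successive translates are filled without collision.
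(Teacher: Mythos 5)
Your proposal is correct and follows essentially the same route as the paper's proof: the same block $B_{0,0}$ with $\mu(B_{0,0})=4$, the same shift set $X=\{i-1 \mid i \in \Phi\}$, and $\frac{\lambda}{4}$ copies of each shifted block fed into the general diagonal construction. The extra verifications you include (the count $|X|=\frac{ms}{\lambda}=f$, the multiplicity $4\cdot\frac{\lambda}{4}=\lambda$, and the observation that $\lambda \mid ms$ rules out the exceptional ``$v$ even, $t$ odd'' case) are all consistent with what the paper establishes in its general setup before the lemma.
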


\begin{proof}
Let  $B=B_{0,0}=\begin{array}{|c|c|}\hline
     1 & -1 \\\hline
      & \\\hline
   -1 & 1\\\hline
    \end{array}\;$. Note that $\mu(B)=4$.
An integer ${}^\lambda\H_t(m,n;s,k)$, say $A$, can be obtained following the construction described before, once we exhibit a
suitable set $X$
of size $\frac{ms}{\lambda}$, in such a way that $\supp(A)=\Phi$.
Consider the set $X=\{i-1\mid i\in \Phi \}$ of size $\frac{ms}{\lambda}$: clearly,
$\bigcup\limits_{x \in X} \supp(B\pm x)=\Phi$.
Now we take $\frac{\lambda}{4}$ copies of every block $B\pm x$:
the p.f. array $A$ obtained following our procedure is an integer ${}^\lambda\H_t(m,n;s,k)$.
\end{proof}

For instance, to construct an integer ${}^8\H_{5}(5,10;8,4)$ we can follow the proof of the previous lemma.
In fact, $\lambda=8$ and $t=5$ divides $\frac{2\cdot 5\cdot 8}{8}$; note that $\ell=3$ and $Y=2\ast (0,1,3,4,6)$.

\begin{footnotesize}
$${}^8\H_{5}(5,10;8,4)=\begin{array}{|c|c|c|c|c|c|c|c|c|c|}\hline
\omb    1&  \omb  -1&     &   -5&    5&    1&   -1&     &   -5&    5  \\\hline
   7&     2&    -2&     &   -7&    7&    2&   -2&     &   -7 \\\hline
\omb  -1&  \omb  1&     4&    -4&     &   -1&    1&    4&   -4&     \\\hline
    &   -2&    2&     5&   -5&     &   -2&    2&    5&   -5 \\\hline
  -7&     &   -4&    4&     7&   -7&     &   -4&    4&    7 \\\hline
  \end{array}\;.$$
\end{footnotesize}

\begin{lem}\label{lambda2}
Let $\lambda$ be a divisor of $ms$ such that
$\lambda\equiv 2 \pmod 4$. There  exists an integer ${}^\lambda\H_t(m,n;s,k)$ for any divisor $t$ of $\frac{2ms}{\lambda}$.
\end{lem}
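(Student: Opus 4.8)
The plan is to instantiate the general procedure described just before Lemma~\ref{lambda4}, building the array $A$ from translated copies of a single block $B_{a,b}$. Since $\lambda\equiv 2\pmod 4$, the value $\mu=4$ coming from $B_{0,0}$ is no longer admissible (it does not divide $\lambda$), so I would drop to $\mu=2$: taking $a=0$ and any $b\geq 1$, the block $B_{0,b}$ has $\supp(B_{0,b})=\{1,b+1\}$ with each of its two distinct values occurring up to sign exactly twice, so $\mu(B_{0,b})=2$, which divides $\lambda$. Writing $N=\frac{ms}{\lambda}$ (so $|\Phi|=N$ and $f=\frac N2$), the task reduces to exhibiting one $b\geq 1$ and a set $X$ of size $\frac N2$ with $\bigcup_{x\in X}\supp(B_{0,b}\pm x)=\bigcup_{x\in X}\{x+1,\,x+b+1\}=\Phi$; equivalently, I must partition $\Phi$ into $\frac N2$ pairs $\{w,w+b\}$ all sharing the \emph{same} difference $b$, after which $X=\{w-1: w \text{ is the smaller element of a pair}\}$ and the $\frac{\lambda}{2}\ast X$ recipe completes the construction.

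Before choosing $b$, I would record two parity facts. First, $N$ is even: since $4\mid s$ while $\lambda\equiv 2\pmod 4$, exactly one factor of $2$ is cancelled, so the power of $2$ dividing $N$ is at least $1$. Second, $\Phi=[1,N+\lfloor t/2\rfloor]\setminus\{\ell,2\ell,\ldots,\lfloor t/2\rfloor\ell\}$ is a disjoint union of maximal runs of consecutive integers (the gaps between deleted multiples of $\ell$): for $t$ even these are $\frac t2$ full runs of length $\ell-1=\frac{2N}{t}$, while for $t$ odd there are $\frac{t-1}{2}$ full runs of length $\ell-1$ together with one final run of length $\frac Nt$.

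The construction then splits on $t\bmod 4$. If $4\nmid t$, I claim every maximal run of $\Phi$ has even length, so that $b=1$ works: matching consecutive integers inside each run yields a perfect matching of $\Phi$ all of whose pairs have difference $1$. Indeed $t\mid 2N$ forces $t'\mid N$, where $t'$ is the odd part of $t$, and combined with $N$ even this makes both $\ell-1=\frac{2N}{t}$ and (when $t$ is odd) $\frac Nt$ even. If instead $4\mid t$, the run lengths may be odd, so I would use $b=\ell$: since $N+\lfloor t/2\rfloor=\frac t2\ell$ is itself the largest deleted multiple, every chain $w,w+\ell,w+2\ell,\ldots$ with $1\leq w\leq \ell-1$ consists of exactly $\frac t2$ elements of $\Phi$, and $\frac t2$ is even, so matching consecutive entries along each chain partitions $\Phi$ into pairs of difference $\ell$. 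In either case $b\geq 1$, hence $\mu(B_{0,b})=2$, and assembling $\frac\lambda2$ translated copies of the resulting blocks produces a shiftable $m\times n$ array with zero row and column sums, support $\Phi$, and every element of $\Phi$ occurring up to sign exactly $\lambda$ times, i.e.\ an integer ${}^\lambda\H_t(m,n;s,k)$.

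The main obstacle is the bookkeeping of the last paragraph: guaranteeing that a single uniform difference $b$ matches the ``holed'' set $\Phi$ simultaneously for \emph{every} divisor $t$. The delicate points are the odd-$t$ leftover run of length $\frac Nt$ (which must be even, and is, precisely because $N$ is even) and the $4\mid t$ regime (where intra-run matching can fail, forcing one instead to exploit that the top of the range coincides with the deleted multiple $\frac t2\ell$, so that all $b=\ell$ chains acquire the common even length $\frac t2$). Checking these parities is exactly what makes the two choices $b=1$ and $b=\ell$ exhaustive.
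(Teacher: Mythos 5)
Your proposal is correct and is essentially the paper's own proof: both arguments reduce to choosing a shiftable $2$-block with $\mu=2$ and support $\{1,b+1\}$, partitioning $\Phi$ into pairs of constant difference $b\in\{1,\ell\}$, and then replicating each translated block $\frac{\lambda}{2}$ times via the general diagonal construction of Section \ref{s0k0}. The only differences are cosmetic: you use $B_{0,b}$ where the paper uses $B_{b,0}$ (transposed roles of row/column sums), and you organize the cases by $t \bmod 4$ (taking $b=\ell$ whenever $4\mid t$), whereas the paper splits on the parity of $\ell$, handling $4\mid t$ with $\ell$ odd by difference-$1$ pairs and odd $t$ via an explicit half-run set $Z$; both case analyses are exhaustive and rest on the same parity facts.
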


\begin{proof}
We first consider the case when $\ell$ is odd, which means that $t$ divides $\frac{ms}{\lambda}$.
Let  $B=B_{1,0}=\begin{array}{|c|c|}\hline
     1 & -2 \\\hline
      & \\\hline
   -1 & 2\\\hline
    \end{array}\;$; note that $\mu(B)=2$.
We start considering the set $X_0=\{0,2,4,\ldots,\ell-3 \}$ of size
$\frac{\ell-1}{2}=\frac{ms}{\lambda t}$: it is easy to see that
$\bigcup\limits_{x \in X_0} \supp(B\pm x)=[1,\ell]\setminus
\{\ell\}$.
Similarly, for any $i \in \N$,  if $X_i=\{i\ell,i\ell+2,i\ell+4,\ldots, (i+1)\ell-3\}$, then
$$\bigcup_{x \in X_i} \supp(B\pm x)=[i\ell+1, (i+1)\ell ]\setminus
\{(i+1)\ell \}$$
and $X_{i_1}\cap X_{i_2}=\emptyset$ if $i_1\neq i_2$.

If $t$ is even, take $X=\bigcup\limits_{i=0}^{t/2-1} X_i$: this is a set of size
$\frac{t}{2}\cdot \frac{ms}{\lambda t}=\frac{ms}{2\lambda}$, as required.
Furthermore,
$$\begin{array}{rcl}
\bigcup\limits_{x \in X} \supp(B\pm x) & =& \bigcup\limits_{i=0}^{t/2-1}
\left( [i\ell+1, (i+1)\ell ]\setminus
\{(i+1)\ell \} \right)\\[8pt]
 & = & \left[1,\frac{t}{2}\ell\right]\setminus \left\{\ell, 2\ell, \ldots, \frac{t}{2}\ell\right\}
=\left[1,\frac{ms}{\lambda}+\frac{t}{2}\right]\setminus  \left\{\ell, 2\ell, \ldots, \frac{t}{2}\ell\right\}.
\end{array}$$

Suppose now that $t$ is odd, which implies that $\ell\equiv 1 \pmod 4$.  Take
$$Z=\left\{\left(\frac{t-1}{2}\right)\ell, \left(\frac{t-1}{2}\right)\ell+2, \left(\frac{t-1}{2}\right)\ell+4,\ldots,
\left(\frac{t-1}{2}\right)\ell+2 \frac{\ell-5}{4} \right\}.$$
Then  $|Z|=\frac{\ell-1}{4}=\frac{ms}{2\lambda t}$ and
$\bigcup\limits_{z \in Z} \supp(B\pm z)=\left[\left(\frac{t-1}{2}\right)\ell+1,
\left(\frac{t-1}{2}\right)\ell+\frac{\ell-1}{2} \right]$.
So, we can take $X=\left(\bigcup\limits_{i=0}^{(t-3)/2} X_i\right)\cup Z$: this is a set of size
$\frac{t-1}{2}\cdot \frac{ms}{\lambda t}+\frac{ms}{2\lambda t}=\frac{ms}{2\lambda}$,
as required.
In this case, $$\begin{array}{rcl}
\bigcup\limits_{x \in X} \supp(B\pm x) & =& \bigcup\limits_{i=0}^{\frac{t-3}{2}}
\left( [i\ell+1, (i+1)\ell ]\setminus
\{(i+1)\ell \} \right)\cup \left[\left(\frac{t-1}{2}\right)\ell+1,
\left(\frac{t-1}{2}\right)\ell+\frac{\ell-1}{2}
\right]\\[8pt]
 & = & \left(\left[1, \frac{t-1}{2}\ell \right]\setminus \left\{\ell, 2\ell, \ldots,
\frac{t-1}{2}\ell\right\}\right)\cup
\left[\left(\frac{t-1}{2}\right)\ell+1, \frac{ms}{\lambda}+\frac{t-1}{2} \right]
\\[8pt]
& =& \left[1,\frac{ms}{\lambda}+\left\lfloor\frac{t}{2}\right\rfloor\right]\setminus
\left\{\ell, 2\ell, \ldots, \left\lfloor\frac{t}{2}\right\rfloor\ell\right\}.
\end{array}$$

In both cases, considering $\frac{\lambda}{2}$ copies of the distinct blocks $B\pm x$ with $x\in X$, the p.f. array $A$ obtained
following our procedure is an integer ${}^\lambda\H_t(m,n;s,k)$.

Finally, we consider the case when $\ell$ is even, which implies that $t\equiv 0 \pmod 4$.
Let  $B=B_{\ell,0}=\begin{array}{|c|c|}\hline
     1 & -(\ell+1) \\\hline
      & \\\hline
   -1 & \ell+1\\\hline
    \end{array}\;$; note that $\mu(B)=2$.
We start considering the set $X_0=[0,\ell-2]$ of size
$\ell-1=\frac{2ms}{\lambda t}$: it is easy to see that
$\bigcup\limits_{x \in X_0} \supp(B\pm x)=[1,2\ell]\setminus
\{\ell,2\ell\}$.
Similarly, for any $i \in \N$,  if $X_i=[2i\ell,(2i+1)\ell-2]$, then
$$\bigcup_{x \in X_i} \supp(B\pm x)=[2i\ell+1, (2i+2)\ell ]\setminus
\{ (2i+1)\ell,(2i+2)\ell \}$$
and $X_{i_1}\cap X_{i_2}=\emptyset$ if $i_1\neq i_2$.
Take $X=\bigcup\limits_{i=0}^{t/4-1} X_i$: this is a set of size
$\frac{t}{4}\cdot (\ell-1)=\frac{ms}{2\lambda}$, as required.
In this case,
$$\begin{array}{rcl}
\bigcup\limits_{x \in X} \supp(B\pm x) & =& \bigcup\limits_{i=0}^{t/4-1}
\left( [2i\ell+1, (2i+2)\ell ]\setminus
\{(2i+1)\ell,(2i+2)\ell \} \right)\\[8pt]
 & = & \left[1,\frac{t}{2}\ell\right]\setminus \left\{\ell, 2\ell, \ldots, \frac{t}{2}\ell\right\}
=\left[1,\frac{ms}{\lambda}+\frac{t}{2}\right]\setminus  \left\{\ell, 2\ell, \ldots, \frac{t}{2}\ell\right\}.
\end{array}$$
Now we take $\frac{\lambda}{2}$ copies of every block $B\pm x$: the p.f. array $A$ obtained following our procedure is an 
integer ${}^\lambda\H_t(m,n;s,k)$.
\end{proof}

We now deal with the case $\lambda$ odd. This implies that $\lambda$ divides $\frac{ms}{4}$.

\begin{lem}\label{k4-3}
Let $\lambda$ be a positive odd integer. There  exists an integer ${}^\lambda\H_t(m,n;s,k)$ for any divisor $t$ of $\frac{2ms}{\lambda}$ such that $t\equiv 0 \pmod 8$.
\end{lem}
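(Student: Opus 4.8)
The plan is to reduce the statement to a purely combinatorial tiling problem, exactly as in Lemmas \ref{lambda4} and \ref{lambda2}: produce a block $B_{a,b}$ with $\mu(B_{a,b})$ dividing $\lambda$ together with a set $X$ of shifts with $\bigcup_{x\in X}\supp(B_{a,b}\pm x)=\Phi$, and then feed these data into the diagonal-filling procedure described just before Lemma \ref{lambda4}. The novelty is the choice of block, which must exploit $t\equiv 0\pmod 8$.

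First I would record the shape of $\Phi$. Since $\lambda$ is odd and divides $2ms=8\cdot\tfrac{ms}{4}$ (recall $4\mid s$), it divides $\tfrac{ms}{4}$, and in particular $\lambda\mid ms$; so the simplified form of the support applies and, as $t$ is even, $\Phi=\left[1,\tfrac{ms}{\lambda}+\tfrac{t}{2}\right]\setminus\{\ell,2\ell,\dots,\tfrac{t}{2}\ell\}$ with $\ell=\tfrac{2ms}{\lambda t}+1$. Writing $h=\ell-1=\tfrac{2ms}{\lambda t}$ and noting that $\tfrac{t}{2}\ell=\tfrac{ms}{\lambda}+\tfrac{t}{2}$, the set $\Phi$ is precisely the disjoint union of the $\tfrac{t}{2}$ intervals $I_i=[(i-1)\ell+1,\,i\ell-1]$, $i\in[1,\tfrac{t}{2}]$, each consisting of $h$ consecutive integers.

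Next I would choose the block. Because $\lambda$ is odd I need $\mu(B_{a,b})=1$, i.e. the four values $1,a{+}1,b{+}1,a{+}b{+}1$ pairwise distinct; I would take $B=B_{\ell,2\ell}$, so that $\supp(B\pm x)=\{x{+}1,\,x{+}\ell{+}1,\,x{+}2\ell{+}1,\,x{+}3\ell{+}1\}$ is a four-term arithmetic progression of common difference $\ell$, and $\mu(B)=1$ since $\ell\geq 1$. The point of this choice is that each such $4$-set places exactly one entry in each of four \emph{consecutive} intervals $I_i,I_{i+1},I_{i+2},I_{i+3}$, at the same relative position, jumping cleanly over the three deleted multiples of $\ell$ lying between them. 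This is where $t\equiv 0\pmod 8$ enters: it forces $\tfrac{t}{2}\equiv 0\pmod 4$, so the $\tfrac{t}{2}$ intervals split into $\tfrac{t}{8}$ consecutive groups of four. For each $j\in[0,\tfrac{t}{8}-1]$ I would let $x$ run through $\{4j\ell,4j\ell+1,\dots,4j\ell+h-1\}$; as $x$ varies over these $h$ values, the four coordinates of $\supp(B\pm x)$ sweep out $I_{4j+1},I_{4j+2},I_{4j+3},I_{4j+4}$ respectively, each element exactly once. Collecting all shifts yields
$$X=\{\,4j\ell+r:\ 0\le j\le \tfrac{t}{8}-1,\ 0\le r\le h-1\,\},$$
with $|X|=\tfrac{t}{8}\,h=\tfrac{ms}{4\lambda}=f$ and $\bigcup_{x\in X}\supp(B\pm x)=\Phi$.

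Finally I would assemble the array as in the procedure preceding Lemma \ref{lambda4}: take $Y=\lambda\ast(x_0,\dots,x_{f-1})$, so that $|Y|=\tfrac{ms}{4}$, and fill $A$ with the shiftable blocks $B\pm y_j$. The diagonal arrangement makes every row contain $s$ filled cells, every column contain $k$ filled cells, and all row and column sums vanish, independently of the shift values; by construction of $Y$ each element of $\Phi$ occurs, up to sign, exactly $\lambda$ times; and since $t$ is even, condition (b) of Definition \ref{LambdaHInt} is satisfied. Hence $A$ is an integer ${}^\lambda\H_t(m,n;s,k)$. The only genuinely delicate point is the choice of the ``long'' block $B_{\ell,2\ell}$ that threads a single entry through four gap-separated intervals at once: recognizing that $t\equiv 0\pmod 8$ is exactly the condition making the number $\tfrac{t}{2}$ of intervals divisible by $4$ is what lets this sweep tile $\Phi$ with no parity hypothesis on $h$, which is precisely the obstruction the earlier lemmas avoided by using even $\lambda$.
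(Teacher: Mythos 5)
Your proof is correct and follows essentially the same route as the paper: you use the same block $B_{\ell,2\ell}$, the same shift sets $X_j=[4j\ell,(4j+1)\ell-2]$ (your interval-sweeping description is just a geometric restatement of the paper's support computation), take $\lambda$ copies of each shifted block, and assemble via the diagonal procedure. The only difference is presentational, namely your explicit framing of $\Phi$ as $\tfrac{t}{2}$ gap-separated intervals grouped in fours, which the paper leaves implicit in its union formula.
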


\begin{proof}
Let  $B=B_{\ell,2\ell}=\begin{array}{|c|c|}\hline
     1 & -(\ell+1) \\\hline
      & \\\hline
   -(2\ell+1) & 3\ell +1\\\hline
    \end{array}\;$, where $\ell=\frac{2ms}{\lambda t}+1$. Note that $\mu(B)=1$.
An integer ${}^\lambda\H_t(m,n;s,k)$, say $A$, can be obtained following the construction described before, once we exhibit a
suitable set $X$
of size $\frac{ms}{4\lambda}$, in such a way that $\supp(A)=\left[1,\frac{ms}{\lambda}+\frac{t}{2}\right]\setminus  \left\{\ell, 2\ell,
\ldots, \frac{t}{2}\ell\right\}$.

Start considering the set $X_0=[0,\ell-2]$ of size $\ell-1=\frac{2ms}{\lambda t}$: it is easy to see that
$\bigcup\limits_{x \in X_0} \supp(B\pm x)=[1,4\ell]\setminus \{\ell, 2\ell, 3\ell, 4\ell \}$.
Similarly, for any $i \in \N$,  if $X_i=[4i\ell, (4i+1)\ell-2 ]$, then
$$\bigcup_{x \in X_i} \supp(B\pm x)=[4i\ell+1, (4i+4)\ell ]
\setminus \{(4i+1)\ell, (4i+2)\ell, (4i+3)\ell, (4i+4)\ell \}.$$
Clearly, $X_{i_1}\cap X_{i_2}=\emptyset$ if $i_1\neq i_2$.
So, take $X=\bigcup\limits_{i=0}^{t/8-1} X_i$: this is a set of size $\frac{t}{8}\cdot (\ell-1)
=\frac{t}{8}\cdot \frac{2ms}{\lambda t}=\frac{ms}{4\lambda}$, as
required.
It is easy to see that
$$\begin{array}{rcl}
\bigcup\limits_{x \in X} \supp(B\pm x) & =& \bigcup\limits_{i=0}^{t/8-1 } \left([4i\ell+1, (4i+4)\ell ] \setminus
 \{(4i+1)\ell, (4i+2)\ell, (4i+3)\ell, (4i+4)\ell \}\right)\\[8pt]
 & = & \left[1,\frac{t}{2}\ell\right]\setminus \left\{\ell, 2\ell, \ldots, \frac{t}{2}\ell\right\}
=\left[1,\frac{ms}{\lambda}+\frac{t}{2}\right]\setminus  \left\{\ell, 2\ell, \ldots, \frac{t}{2}\ell\right\}.
\end{array}$$
Now we take $\lambda$ copies of every block $B\pm x$: the p.f. array $A$ obtained following our procedure is an integer 
${}^\lambda\H_t(m,n;s,k)$.
\end{proof}

\begin{lem}\label{k4-2}
Let $\lambda$ be a positive odd integer. There exists an integer ${}^\lambda \H_t(m,n;s,k)$ for any divisor $t$ of
$\frac{ms}{\lambda}$ such that $t\equiv 0 \pmod 4$.
\end{lem}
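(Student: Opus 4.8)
The plan is to mirror the proof of Lemma~\ref{k4-3} almost verbatim, replacing the block $B_{\ell,2\ell}$ (which sweeps out an interval of length $4\ell$ and hence forces $t\equiv 0\pmod 8$) by a block that sweeps out an interval of length $2\ell$, so that only $t\equiv 0\pmod 4$ is needed. First I would record the key arithmetic fact: since $t$ divides $\frac{ms}{\lambda}$, the quotient $\frac{ms}{\lambda t}$ is an integer and $\ell=\frac{2ms}{\lambda t}+1$ is \emph{odd}; in particular $\frac{\ell-1}{2}$ is a positive integer and, because $\lambda$ is odd, $\lambda$ divides $ms$, so $\Phi=\left[1,\frac{ms}{\lambda}+\frac{t}{2}\right]\setminus\left\{\ell,2\ell,\ldots,\frac{t}{2}\ell\right\}$ and every element must appear $\lambda$ times up to sign.

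Next I would choose the block
$$B=B_{1,\ell}=\begin{array}{|c|c|}\hline 1 & -2 \\\hline & \\\hline -(\ell+1) & \ell+2\\\hline\end{array}\;,$$
whose support is $\{1,2,\ell+1,\ell+2\}$; since $\ell\geq 3$ these four values are distinct, so $\mu(B)=1$, and its row/column sums $(-1,0,1)$ and $(-\ell,\ell)$ are admissible for the diagonal construction. The two small support values $\{1,2\}$ are consecutive and the two large ones $\{\ell+1,\ell+2\}$ are their translates by $\ell$, which is exactly what makes even shifts tile cleanly. For $i\in\N$ set $X_i=\{2i\ell,\,2i\ell+2,\,\ldots,\,2i\ell+\ell-3\}$, a set of size $\frac{\ell-1}{2}$. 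Shifting $\{1,2\}$ by $X_i$ fills $[2i\ell+1,(2i+1)\ell-1]$ and shifting $\{\ell+1,\ell+2\}$ by $X_i$ fills $[(2i+1)\ell+1,(2i+2)\ell-1]$, whence
$$\bigcup_{x\in X_i}\supp(B\pm x)=[2i\ell+1,(2i+2)\ell]\setminus\{(2i+1)\ell,(2i+2)\ell\},$$
an interval of length $2\ell$ missing precisely two consecutive multiples of $\ell$, and the $X_i$ are pairwise disjoint.

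Then I would take $X=\bigcup_{i=0}^{t/4-1}X_i$; here the hypothesis $t\equiv 0\pmod 4$ is used so that $\frac{t}{4}$ is an integer, and since each superblock removes two of the $\frac{t}{2}$ forbidden multiples $\ell,2\ell,\ldots,\frac{t}{2}\ell$, exactly $\frac{t}{4}$ of them are required. A direct union shows $\bigcup_{x\in X}\supp(B\pm x)=\left[1,\frac{t}{2}\ell\right]\setminus\left\{\ell,2\ell,\ldots,\frac{t}{2}\ell\right\}=\Phi$, and the cardinality count $|X|=\frac{t}{4}\cdot\frac{\ell-1}{2}=\frac{t}{8}\cdot\frac{2ms}{\lambda t}=\frac{ms}{4\lambda}$ confirms the correct number of distinct blocks. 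Finally, taking $\lambda$ copies of each block $B\pm x$ with $x\in X$ and arranging them by the diagonal procedure of this section yields a shiftable $m\times n$ p.f.\ array $A$ with $s$ filled cells per row, $k$ per column, zero row and column sums, and every element of $\Phi$ occurring $\lambda$ times up to sign; hence $A$ is an integer ${}^\lambda\H_t(m,n;s,k)$. The only real design choice—and thus the main point to get right—is selecting a $\mu=1$ block whose support splits into two pairs offset by $\ell$, so that even shifts tile an interval of length $2\ell$ with exactly two multiples of $\ell$ deleted; once $B_{1,\ell}$ is fixed, the remaining verifications are routine and parallel Lemma~\ref{k4-3}.
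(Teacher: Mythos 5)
Your proposal is correct and coincides with the paper's own proof: it uses the same block $B_{1,\ell}$ with $\mu(B_{1,\ell})=1$, the same shift sets $X_i=\{2i\ell,2i\ell+2,\ldots,(2i+1)\ell-3\}$ of size $\frac{\ell-1}{2}$, the same union $X=\bigcup_{i=0}^{t/4-1}X_i$ covering $\Phi$, and the same final step of taking $\lambda$ copies of each block in the diagonal construction. All the supporting details (oddness of $\ell$, the cardinality count $|X|=\frac{ms}{4\lambda}$, and the tiling of $[2i\ell+1,(2i+2)\ell]$ minus two multiples of $\ell$) match the paper's argument exactly.
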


\begin{proof}
Let  $B=B_{1,\ell}=\begin{array}{|c|c|}\hline
     1 & -2 \\\hline
      & \\\hline
   -(\ell+1) & \ell+2\\\hline
    \end{array}:$ note that $\mu(B)=1$ and, since $t$ divides $\frac{ms}{\lambda}$, $\ell=\frac{2ms}{\lambda t}+1$
    is an odd integer.
We start considering the set $X_0=\{0,2,4,\ldots,\ell-3\}$ of size
$\frac{\ell-1}{2}=\frac{ms}{\lambda t}$: it is easy to see that
$\bigcup\limits_{x \in X_0} \supp(B\pm x)=[1,\ell-1]\cup [\ell+1,2\ell-1]=[1,2\ell]\setminus
\{\ell, 2\ell\}$.
Similarly, for any $i \in \N$,  if $X_i=\{2i\ell,2i\ell+2,2i\ell+4,\ldots, (2i+1)\ell-3 \}$, then
$$\bigcup_{x \in X_i} \supp(B\pm x)=[2i\ell+1, 2(i+1)\ell ]\setminus
\{(2i+1)\ell, (2i+2)\ell \}$$
and $X_{i_1}\cap X_{i_2}=\emptyset$ if $i_1\neq i_2$.
So, take $X=\bigcup\limits_{i=0}^{t/4-1} X_i$: this is a set of size
$\frac{t}{4}\cdot \frac{\ell-1}{2} =\frac{t}{4}\cdot \frac{ms}{\lambda t}=\frac{ms}{4 \lambda}$, as required.
Hence, 
$$\begin{array}{rcl}
\bigcup\limits_{x \in X} \supp(B\pm x) & =& \bigcup\limits_{i=0}^{t/4-1}
\left([2i\ell+1, 2(i+1)\ell ]\setminus
\{(2i+1)\ell, (2i+2)\ell \} \right)\\[8pt]
 & = & \left[1,\frac{t}{2}\ell\right]\setminus \left\{\ell, 2\ell, \ldots, \frac{t}{2}\ell\right\}
=\left[1,\frac{ms}{\lambda}+\frac{t}{2}\right]\setminus  \left\{\ell, 2\ell, \ldots, \frac{t}{2}\ell\right\}.
\end{array}$$
Now we take $\lambda$ copies of every block $B\pm x$:  the p.f. array $A$ obtained following our procedure is an integer
${}^\lambda\H_t(m,n;s,k)$.
\end{proof}

For instance, to construct an integer ${}^5\H_{4}(5,10;8,4)$ we can follow the proof of the previous lemma.
In fact, $\lambda=5$ and $t=4$ divides $\frac{5\cdot 8}{5}$; note that $\ell=5$ and $Y=5\ast (0,2)$.

\begin{footnotesize}
$${}^5\H_{4}(5,10;8,4)=\begin{array}{|c|c|c|c|c|c|c|c|c|c|}\hline
\omb    1&  \omb  -2&     &   -8&    9&    3&   -4&     &   -6&    7  \\\hline
   9&     3&    -4&     &   -6&    7&    1&   -2&     &   -8 \\\hline
\omb  -6&  \omb  7&     1&    -2&     &   -8&    9&    3&   -4&     \\\hline
    &   -8&    9&     3&   -4&     &   -6&    7&    1&   -2 \\\hline
  -4&     &   -6&    7&     1&   -2&     &   -8&    9&    3 \\\hline
  \end{array}\;.$$
\end{footnotesize}

\begin{lem}\label{k4-1}
Let $\lambda$ be a positive odd integer. There exists an integer ${}^\lambda\H_t(m,n;s,k)$ for any divisor $t$ of $\frac{ms}{2\lambda}$.
\end{lem}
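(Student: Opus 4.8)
The plan is to run the block-and-diagonal procedure of this section with a block whose support lies entirely inside a \emph{single} window of length $\ell$; this is what makes the argument work for an arbitrary divisor $t$, in contrast to Lemmas~\ref{k4-3} and~\ref{k4-2}, whose blocks $B_{\ell,2\ell}$ and $B_{1,\ell}$ spread across several windows and therefore need $t\equiv 0\pmod 8$ or $t\equiv 0\pmod 4$. First I would record the arithmetic. Since $\lambda$ is odd and $\lambda\mid ms$ with $4\mid s$, we get $\lambda\mid\frac{ms}{4}$, so $N:=\frac{ms}{4\lambda}$ is a positive integer and the set $X$ we seek must have size $\frac{ms}{4\lambda}=N$. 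From $t\mid\frac{ms}{2\lambda}=2N$ it follows that $\frac{ms}{2\lambda t}=\frac{2N}{t}\in\Z$, hence $\ell-1=\frac{2ms}{\lambda t}=4\cdot\frac{ms}{2\lambda t}\equiv 0\pmod 4$; thus $\ell\equiv 1\pmod 4$ and $\frac{\ell-1}{4}$ is a positive integer.

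Next I would take $B=B_{1,2}$, so that $\mu(B)=1$ and $\supp(B\pm x)=\{1+x,2+x,3+x,4+x\}=[1+x,4+x]$ is a block of four consecutive integers. Shifting by the step-$4$ progression $X_i=\{i\ell,\,i\ell+4,\,\ldots,\,i\ell+\ell-5\}$ (of size $\frac{\ell-1}{4}$) then tiles the $i$-th window: $\bigcup_{x\in X_i}\supp(B\pm x)=[i\ell+1,(i+1)\ell]\setminus\{(i+1)\ell\}$, and the $X_i$ are pairwise disjoint. When $t$ is even I would set $X=\bigcup_{i=0}^{t/2-1}X_i$, of size $\frac{t}{2}\cdot\frac{\ell-1}{4}=N$, whose shifted supports union to $[1,\frac{t}{2}\ell]\setminus\{\ell,\ldots,\frac{t}{2}\ell\}=[1,\frac{ms}{\lambda}+\frac{t}{2}]\setminus\{\ell,\ldots,\frac{t}{2}\ell\}=\Phi$, exactly as in the even-$t$ computations of the earlier lemmas.

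The delicate case is $t$ odd. Here the full windows reach only $\lfloor t/2\rfloor\ell=\frac{t-1}{2}\ell$, which falls short of the top endpoint $\frac{ms}{\lambda}+\frac{t-1}{2}=\frac{t\ell-1}{2}$ of $\Phi$ by exactly $\frac{\ell-1}{2}$, and the interval $[\frac{t-1}{2}\ell+1,\frac{t\ell-1}{2}]$ contains no multiple of $\ell$ (the next one, $\frac{t+1}{2}\ell$, is larger). I would fill the first $\frac{t-1}{2}$ windows with $X_0,\ldots,X_{(t-3)/2}$ and cover the leftover interval with a final step-$4$ progression $Z=\{\frac{t-1}{2}\ell,\frac{t-1}{2}\ell+4,\ldots,\frac{t-1}{2}\ell+\frac{\ell-1}{2}-4\}$. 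The main obstacle is precisely that $Z$ has $\frac{\ell-1}{8}$ elements, so one must check this is an integer; it is, because $t$ odd together with $t\mid 2N$ forces $t\mid N$, whence $\frac{\ell-1}{8}=\frac{N}{t}\in\Z$ (equivalently $\ell\equiv 1\pmod 8$). Then $X=\bigl(\bigcup_{i=0}^{(t-3)/2}X_i\bigr)\cup Z$ has size $\frac{(t-1)(\ell-1)}{8}+\frac{\ell-1}{8}=N$ and $\bigcup_{x\in X}\supp(B\pm x)=\Phi$.

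Finally, since $\mu(B)=1$ divides $\lambda$, I would take $\lambda$ copies of each block $B\pm x$ with $x\in X$ (the sequence $Y=\lambda\ast(x_0,\ldots,x_{N-1})$) and lay them along the diagonals as at the start of the section. The general procedure guarantees a shiftable $m\times n$ array with every row and column summing to $0$, with $s$ filled cells per row and $k$ per column; by construction its support is $\Phi$ and each element of $\Phi$ occurs, up to sign, exactly $\lambda$ times, so the array is an integer ${}^\lambda\H_t(m,n;s,k)$.
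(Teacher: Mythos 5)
Your proposal is correct and follows essentially the same route as the paper's proof: the same block $B_{1,2}$ with $\mu(B)=1$, the same step-$4$ progressions $X_i$ tiling each window $[i\ell+1,(i+1)\ell]\setminus\{(i+1)\ell\}$, the same split into $t$ even (take $t/2$ full windows) and $t$ odd (take $\frac{t-1}{2}$ full windows plus the half-window set $Z$), and the same final step of taking $\lambda$ copies of each block and assembling via the diagonal procedure. Your explicit verification that $t$ odd forces $\ell\equiv 1\pmod 8$ (via $t\mid N$) is a welcome elaboration of a fact the paper merely asserts.
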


\begin{proof}
Let  $B=B_{1,2}=\begin{array}{|c|c|}\hline
     1 & -2 \\\hline
      & \\\hline
   -3 & 4\\\hline
    \end{array}\;$. Note that  $\mu(B)=1$ and  $\ell=\frac{2ms}{\lambda t}+1\equiv 1 \pmod 4$ since $t$ divides $\frac{ms}{2\lambda}$.
We start considering the set $X_0=\{0,4,8,\ldots,\ell-5 \}$ of size
$\frac{\ell-1}{4}=\frac{ms}{2\lambda t}$: clearly,
$\bigcup\limits_{x \in X_0} \supp(B\pm x)=[1,\ell]\setminus
\{\ell\}$.
Similarly, for any $i \in \N$,  if $X_i=\{i\ell,i\ell+4,i\ell+8,\ldots, (i+1)\ell-5\}$, then
$$\bigcup_{x \in X_i} \supp(B\pm x)=[i\ell+1, (i+1)\ell ]\setminus
\{(i+1)\ell \}$$
and $X_{i_1}\cap X_{i_2}=\emptyset$ if $i_1\neq i_2$.

If $t$ is even, take $X=\bigcup\limits_{i=0}^{t/2-1} X_i$: this is a set of size
$\frac{t}{2}\cdot \frac{\ell-1}{4}=\frac{t}{2}\cdot \frac{ms}{2\lambda t}=\frac{ms}{4 \lambda}$, as required.
Hence,
$$\begin{array}{rcl}
\bigcup\limits_{x \in X} \supp(B\pm x)  & =& \bigcup\limits_{i=0}^{t/2-1}
\left( [i\ell+1, (i+1)\ell ]\setminus
\{(i+1)\ell \} \right)\\[8pt]
 & = & \left[1,\frac{t}{2}\ell\right]\setminus \left\{\ell, 2\ell, \ldots, \frac{t}{2}\ell\right\}
=\left[1,\frac{ms}{\lambda}+\frac{t}{2}\right]\setminus  \left\{\ell, 2\ell, \ldots, \frac{t}{2}\ell\right\}.
\end{array}$$
Suppose now that $t$ is odd. Notice that, in this case, $\ell\equiv 1\pmod 8$. Take
$$Z=\left\{\left(\frac{t-1}{2}\right)\ell, \left(\frac{t-1}{2}\right)\ell+4, \left(\frac{t-1}{2}\right)\ell+8,\ldots,
\left(\frac{t-1}{2}\right)\ell+ 4\frac{\ell-9}{8} \right\}.$$
Then  $|Z|=\frac{\ell-1}{8}=\frac{ms}{4\lambda t}$ and
$\bigcup\limits_{z \in Z} \supp(B\pm z)=\left[\left(\frac{t-1}{2}\right)\ell+1,
\left(\frac{t-1}{2}\right)\ell+\frac{\ell-1}{2} \right]$.
Take $X=\left(\bigcup\limits_{i=0}^{(t-3)/2} X_i\right)\cup Z$: this is a set of size
$\frac{t-1}{2}\cdot\frac{\ell-1}{4}+\frac{\ell-1}{8} =\frac{t-1}{2}\cdot \frac{ms}{2\lambda t}+\frac{ms}{4\lambda t}=
\frac{ms}{4\lambda}$,
as required.
In this case, 
$$\begin{array}{rcl}
\bigcup\limits_{x \in X} \supp(B\pm x)  & =& \bigcup\limits_{i=0}^{\frac{t-3}{2}}
\left( [i\ell+1, (i+1)\ell ]\setminus
\{(i+1)\ell \} \right)\cup \left[\left(\frac{t-1}{2}\right)\ell+1,
\left(\frac{t-1}{2}\right)\ell+\frac{\ell-1}{2} \right] \\[8pt]
 & = & \left(\left[1, \frac{t-1}{2}\ell \right]\setminus \left\{\ell, 2\ell, \ldots,
\frac{t-1}{2}\ell\right\}\right)\cup
\left[\left(\frac{t-1}{2}\right)\ell+1, \frac{ms}{\lambda}+\frac{t-1}{2} \right]
\\[8pt]
& =& \left[1,\frac{ms}{\lambda}+\left\lfloor\frac{t}{2}\right\rfloor\right]\setminus
\left\{\ell, 2\ell, \ldots, \left\lfloor\frac{t}{2}\right\rfloor\ell\right\}.
\end{array}$$

In both cases, we construct the p.f. array $A$ using  $\lambda$ copies of every block $B\pm x$;  so, the p.f. array $A$ obtained following our procedure is an integer 
${}^\lambda\H_t(m,n;s,k)$.
\end{proof}

For instance, we can follow the proof of the previous lemma for constructing an integer ${}^3\H_{3}(9,9;8,8)$.
In fact, $\lambda=3$ and $t=3$ divides $\frac{9\cdot 8}{2\cdot 3}$; note that $\ell=17$ and $Y=3\ast (0,4,8,12,17,21)$.

\begin{footnotesize}
$${}^3\H_{3}(9,9;8,8)=\begin{array}{|c|c|c|c|c|c|c|c|c|}\hline
\omb 1 & \omb -2 & -20  & 21  &13  &-14  &  &-7  &8  \\\hline
12 & 5 & -6 & -24& 25& 18& -19& & -11 \\\hline
\omb -3 & \omb 4 & 9  & -10  &-15 & 16 & 22 &-23  &    \\\hline
& -7& 8& 13& -14& -20& 21& 1& -2 \\\hline
 -6& & -11& 12& 18& -19& -24& 25& 5 \\\hline
9& -10& & -15& 16& 22& -23& -3& 4 \\\hline
8& 13& -14& & -20& 21& 1& -2& -7 \\\hline
-11& 12& 18& -19& & -24& 25& 5& -6 \\\hline
-10& -15& 16& 22& -23& & -3& 4& 9 \\\hline
  \end{array}\;.$$
\end{footnotesize}

We now consider the case when $\lambda$ does not divide $ms$. We need to adjust our general strategy in order to 
satisfy \eqref{cond2ms}.

\begin{lem}\label{lambda ms}
Suppose that $\lambda$ does not divide $ms$. There  exists an integer ${}^\lambda\H_t(m,n;s,k)$ for any divisor $t$ 
of $\frac{2ms}{\lambda}$.
\end{lem}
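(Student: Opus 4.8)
The plan is to first pin down the arithmetic forced by the hypotheses of this section. Since $s\equiv 0 \pmod 4$ we have $4\mid ms$, so the $2$-adic valuation of $ms$ is at least $2$. As $\lambda\mid 2ms$ but $\lambda\nmid ms$, the odd part of $\lambda$ already divides $ms$, so the failure must be in the $2$-part; combined with $\lambda\mid 2ms$ this forces the valuation of $\lambda$ to be exactly one more than that of $ms$, hence at least $3$. Thus $8\mid\lambda$ and $\frac{2ms}{\lambda}$ is \emph{odd}. Consequently every divisor $t$ of $\frac{2ms}{\lambda}$ is odd, $\ell=\frac{2ms}{\lambda t}+1$ is even, and $v=t\ell$ is even: we are therefore \emph{always} in the involution situation of \eqref{cond2ms}, with involution $w=\frac{t\ell}{2}=\frac{v}{2}$. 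Note $w\in\Phi$, since $w=\frac{t}{2}\ell$ is not an integer multiple of $\ell$ ($t$ being odd), so $w$ is the maximum of $\Phi$ and must be realized with multiplicity $\frac{\lambda}{2}$, while every other element of $\Phi$ must appear, up to sign, exactly $\lambda$ times.

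The main idea is then to fill the array entirely with shifts of the single–support block $B_{0,0}=\begin{array}{|c|c|}\hline 1 & -1\\\hline & \\\hline -1 & 1\\\hline\end{array}$ used in Lemma \ref{lambda4}, for which $\mu(B_{0,0})=4$ and, crucially, every row sum and column sum of each $B_{0,0}\pm x$ is identically $0$. This makes property (c) automatic, while the diagonal–placement procedure described at the start of the section guarantees property (a) (the filled–cell counts depend only on the positions of the blocks, not on their entries). Concretely, I would take, for each $i\in\Phi\setminus\{w\}$, exactly $\frac{\lambda}{4}$ copies of $B_{0,0}\pm(i-1)$, whose support is $\{i\}$, so that $i$ occurs up to sign $4\cdot\frac{\lambda}{4}=\lambda$ times; and for the involution I would take $\frac{\lambda}{8}$ copies of $B_{0,0}\pm(w-1)$, so that $w$ occurs up to sign $4\cdot\frac{\lambda}{8}=\frac{\lambda}{2}$ times. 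Both multiplicities are positive integers precisely because $8\mid\lambda$, which is the pivotal point.

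It remains to check that these blocks fill the array exactly. Using $t$ odd and $\ell$ even one computes $|\Phi|=\frac{t\ell}{2}-\frac{t-1}{2}=\frac12\!\left(\frac{2ms}{\lambda}+1\right)$, which is an integer because $\frac{2ms}{\lambda}$ is odd. The total number of blocks is then $\frac{\lambda}{4}\bigl(|\Phi|-1\bigr)+\frac{\lambda}{8}=\frac{\lambda}{4}|\Phi|-\frac{\lambda}{8}=\frac{ms}{4}$, which is exactly the number of $3\times 2$ blocks needed to fill the $m\times n$ array. One assembles the shift sequence $Y$ (listing each $i-1$ with multiplicity $\frac{\lambda}{4}$ and $w-1$ with multiplicity $\frac{\lambda}{8}$) and places the corresponding blocks via the standard diagonal arrangement $1+y_j\mapsto(j+1,4q_j+j+1)$. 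The resulting array satisfies (a), (b) in the form \eqref{cond2ms}, and (c), so it is an integer ${}^\lambda\H_t(m,n;s,k)$.

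The only genuinely delicate point — and what I would flag as the crux of the argument — is handling the half–multiplicity of the involution $w$: no balanced $3\times 2$ block has a single support value with $\mu<4$, so $w$ cannot be isolated at an odd or small even multiplicity by block–combinatorics alone. What rescues the construction is the opening observation that the hypotheses $4\mid ms$ and $\lambda\nmid ms$ force $8\mid\lambda$; this single divisibility fact simultaneously guarantees that $\frac{\lambda}{4}$ and $\frac{\lambda}{8}$ are integers and that we are in the involution case, turning the whole proof into a short variant of Lemma \ref{lambda4} in which the top block is taken with half as many copies.
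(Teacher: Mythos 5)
Your proposal is correct and is essentially the paper's own construction: the same block $B_{0,0}$, the same multiplicities ($\frac{\lambda}{4}$ copies of $B_{0,0}\pm(i-1)$ for $i\in\Phi\setminus\{\frac{t\ell}{2}\}$ and $\frac{\lambda}{8}$ copies for the involution), the same count $\frac{ms}{4}$ of blocks, and the same diagonal placement, exactly as in the paper's treatment of the case ``$\ell$ even and $t$ odd''. The one (correct) refinement is your observation that $\lambda\nmid ms$ and $\lambda\mid 2ms$ force $v_2(\lambda)=v_2(ms)+1$, hence $\frac{2ms}{\lambda}$ odd, so the paper's other case (``$\ell$ odd or $t$ even'', which it also equips with a construction) can in fact never occur here.
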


\begin{proof}
Since $\lambda$ divides $2ms$ but  does not divide $ms$, 
from $s\equiv 0 \pmod{4}$ we obtain $\lambda \equiv 0 \pmod 8$.
We can easily adapt the proof of Lemma 	\ref{lambda4}, using the block  $B=B_{0,0}=\begin{array}{|c|c|}\hline
     1 & -1 \\\hline
      & \\\hline
   -1 & 1\\\hline
    \end{array}$ and considering two possibilities.
In both cases,  an integer ${}^\lambda\H_t(m,n;s,k)$, say $A$, can be obtained following the construction given at 
the beginning of this section and using the blocks $B\pm y_0, B \pm y_1,\ldots,B\pm y_{\frac{ms}{4}-1}$ for a suitable sequence 
$Y=(y_0,y_1,\ldots,  y_{\frac{ms}{4}-1})$ in such a way that condition \eqref{cond2ms} is satisfied.

Suppose that $\ell$ is odd or $t$ is even.
It suffices to consider the sequence $X$ obtained by taking the natural ordering $\leq$ of $\{i-1\mid i\in \Phi \}\subset \N$,
and define $Y=\frac{\lambda}{4}\ast X$.

Suppose that $\ell$ is even and $t$ is odd.
Let $X_1$ be the sequence obtained by taking the natural ordering $\leq$ of $\{i-1\mid i\in \Psi \}\subset \N$,
where $\Psi= \Phi \setminus \left\{\frac{t\ell}{2}\right\}$.
Also, let $Y_1=\frac{\lambda}{4}\ast X_1$ and let $Y_2$ be the sequence obtained by repeating 
$\frac{\lambda}{8}$ times
the integer $\frac{t\ell}{2}-1$.
Define $Y=Y_1\con Y_2$ and note that $|Y|=\frac{\lambda}{4}\cdot \frac{2ms-\lambda}{2\lambda}+\frac{\lambda}{8}=\frac{ms}{4}$.
\end{proof}

For instance, we can follow the proof of the previous lemma for constructing an integer ${}^{16}\H_{5}(10,10;4,4)$.
In fact, $\lambda=16$ does not divide $ms=40$; note that $\ell=2$,
$X_1=(0,2)$ and $Y=(0,2,0,2,0,2,0,2, 4,4)$.

\begin{footnotesize}
$${}^{16}\H_{5}(10,10;4,4)=\begin{array}{|c|c|c|c|c |c|c|c|c|c|}\hline
   1 &-1 & & & & & & &-5 & 5 \\ \hline
    5 & 3 &-3 & & & & & & &-5 \\ \hline
   -1 & 1 & 1 & -1 & & & & & &  \\ \hline
    & -3 & 3 & 3 &-3 & & & & &  \\ \hline
    & & -1 & 1 & 1 &-1 & & & &  \\ \hline
    & & & -3 & 3 & 3 &-3 & & &  \\ \hline
    & & & &-1 & 1 & 1 &-1 & &  \\ \hline
    & & & & &-3 & 3 & 3 &-3 &  \\ \hline
    & & & & & &-1 & 1 & 5 &-5 \\ \hline
   -5 & & & & & & &-3 & 3 & 5 \\ \hline
  \end{array}\;.$$
\end{footnotesize}

\begin{prop}\label{prop:k4}
Suppose $4\leq s \leq n$, $4\leq k \leq m$, $ms=nk$ and $s,k\equiv 0 \pmod 4$. Let $\lambda$ be a divisor of $2ms$.
There exists a shiftable integer ${}^\lambda\H_t(m,n;s,k)$ for every divisor $t$ of $\frac{2ms}{\lambda}.$
\end{prop}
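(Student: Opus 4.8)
The plan is to obtain the proposition as a bookkeeping assembly of Lemmas \ref{lambda4}--\ref{lambda ms}: I would argue that for every admissible $\lambda$ and every divisor $t$ of $\frac{2ms}{\lambda}$, at least one of these lemmas applies. First I would dispose of the shiftability claim uniformly, independently of the case split. Every array built in this section tiles the diagonals of an empty $m\times n$ grid with translated copies $B_{a,b}\pm y_j$ of the basic block, and each such block contributes precisely one positive and one negative entry to each of its two occupied rows and to each of its two occupied columns. Hence every row and column of the resulting array carries equally many positive and negative entries, so the array is shiftable regardless of which lemma produced it.

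Next I would split on whether $\lambda$ divides $ms$. If $\lambda\nmid ms$, then Lemma \ref{lambda ms} already produces an integer ${}^\lambda\H_t(m,n;s,k)$ for every divisor $t$ of $\frac{2ms}{\lambda}$, and nothing more is needed. Assume then $\lambda\mid ms$. If $\lambda\equiv 0\pmod 4$, Lemma \ref{lambda4} applies; if $\lambda\equiv 2\pmod 4$, Lemma \ref{lambda2} applies. In both of these cases the cited lemma already handles \emph{every} divisor $t$ of $\frac{2ms}{\lambda}$, so the only remaining case is $\lambda$ odd.

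For $\lambda$ odd I would first record that, since $s\equiv 0\pmod 4$, $\lambda$ must divide $\frac{ms}{4}$; writing $M:=\frac{ms}{\lambda}$, this gives $M\equiv 0\pmod 4$, hence $v_2(M)\geq 2$ (where $v_2$ denotes the exponent of $2$), together with $\frac{2ms}{\lambda}=2M\equiv 0\pmod 8$ and $\frac{ms}{2\lambda}=\tfrac{M}{2}\in\N$. Now I would fix a divisor $t$ of $2M$ and split according to $e:=v_2(t)$. If $e\geq 3$, i.e.\ $t\equiv 0\pmod 8$, then $t\mid\frac{2ms}{\lambda}$ and Lemma \ref{k4-3} applies. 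If $e=2$, i.e.\ $t\equiv 4\pmod 8$, then $v_2(t)=2\leq v_2(M)$ while the odd part of $t$ divides $M$, so $t\mid\frac{ms}{\lambda}$ with $t\equiv 0\pmod 4$, and Lemma \ref{k4-2} applies. Finally, if $e\leq 1$, then $v_2(t)\leq 1\leq v_2(M)-1$, so $t\mid\frac{ms}{2\lambda}$ and Lemma \ref{k4-1} applies.

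The main, and essentially only, obstacle is to make this last trichotomy genuinely exhaustive, with no divisor $t$ left uncovered. The delicate point is precisely the subcase $t\equiv 4\pmod 8$: when $v_2(M)=2$ such a $t$ fails to divide $\frac{ms}{2\lambda}$, so Lemma \ref{k4-1} does not reach it, yet it does divide $\frac{ms}{\lambda}$, which is exactly the hypothesis of Lemma \ref{k4-2}. Thus the three odd-$\lambda$ lemmas are arranged so that their divisibility hypotheses tile the divisors of $\frac{2ms}{\lambda}$, and verifying this tiling reduces to the valuation comparisons above against $v_2(M)\geq 2$, which are short once that inequality is in hand.
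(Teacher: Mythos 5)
Your proposal is correct and follows essentially the same route as the paper's proof: the identical case split ($\lambda\nmid ms$ via Lemma \ref{lambda ms}; $\lambda\equiv 0,2\pmod 4$ via Lemmas \ref{lambda4} and \ref{lambda2}; $\lambda$ odd split by $t\equiv 0\pmod 8$, $t\equiv 4\pmod 8$, $t\not\equiv 0\pmod 4$ via Lemmas \ref{k4-3}, \ref{k4-2}, \ref{k4-1}). Your $2$-adic valuation checks and the uniform shiftability observation merely make explicit the divisibility claims and the closing remark that the paper states without detail.
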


\begin{proof}
If $\lambda$ does not divide $ms$,  the statement follows from Lemma \ref{lambda ms}.
So, suppose that $\lambda$ divides $ms$.
If $\lambda\equiv 0 \pmod 4$ or $\lambda\equiv 2 \pmod 4$, then we can apply Lemma \ref{lambda4} or Lemma \ref{lambda2}, respectively.
Now we assume $\lambda$ odd.
If $t \equiv 0 \pmod 8$, we apply Lemma \ref{k4-3}.
If $t \equiv 4 \pmod 8$, then $t$ divides $\frac{ms}{\lambda}$ and hence we can apply Lemma \ref{k4-2}.
Finally, if $t\not \equiv 0 \pmod 4$, then $t$ divides $\frac{ms}{2\lambda}$ and so the existence of an  integer
${}^\lambda\H_t(m,n;s,k)$  follows from Lemma \ref{k4-1}.
In all these cases, the integer $\lambda$-fold Heffter array that we construct is
shiftable.
\end{proof}

\section{The case $s\equiv 2 \pmod 4$, $k$ and $m$ even}\label{s2}

In this section, we will assume that $s,m,k$ are positive even integers with $s \equiv 2 \pmod 4$ and $s\geq 6$.
We need to distinguish two cases, according to the divisibility of $ms$ by $\lambda$.
In fact, if $\lambda$ does not divide $ms$, from $ms\equiv 0 \pmod 4$ we obtain $\lambda\equiv 0 \pmod 8$.
In this case, we have to construct p.f. arrays that satisfy \eqref{cond2ms}.

If $\lambda$ divides $ms$ we write
\begin{equation}\label{lam}
\lambda=\lambda_1 \lambda_2,\quad \textrm{ where } \lambda_1 \textrm{ divides } \frac{m}{2} \textrm{ and }
\lambda_2\textrm{ divides } 2s.
\end{equation}
Let $t$ be a divisor of $\frac{2ms}{\lambda}$ and set
$$\ell=\frac{2ms}{\lambda t}+1.$$

\subsection{Construction of nice pairs of sequences}

To obtain an integer ${}^\lambda \H_t(m,n;s,k)$, we first construct pairs of sequences, satisfying the
following properties.

\begin{defi}
A pair $(\B_1,\B_2)$ of sequences is said to be \emph{nice} if, for a fixed positive integer $b$, we have:
\begin{itemize}
\item the sequence $\B_1$ consists of blocks satisfying this condition:
\begin{equation}\label{blocchi}
\begin{array}{l}
\textrm{there exist } b \textrm{ integers } \sigma_1,\ldots,\sigma_{b} \textrm{ such that the elements of } \B_1  \\
\textrm{are shiftable blocks } B \textrm{ of size }
2\times 2b \textrm{ with }\tau_1(B)=\tau_2(B)=0\\
\textrm{and } \gamma_{2i-1}(B)=-\gamma_{2i}(B)=\sigma_i \textrm{ for all } i \in [1,b];
\end{array}
\end{equation}
\item the sequence $\B_2$ consists of blocks satisfying this condition:
\begin{equation}\label{blocchiOLD}
\begin{array}{l}
\textrm{there exist } 2b \textrm{ integers } \sigma'_1,\ldots,\sigma'_{2b} \textrm{ with }
\sum\limits_{i=1}^{b} \sigma'_{2i-1} = \sum\limits_{i=1}^{b} \sigma'_{2i} =0,\\
\textrm{such that the elements of } \B_2 \textrm{ are shiftable blocks } B' \textrm{ of size } 2\times 2b\\
\textrm{with } \tau_1(B')=\tau_2(B')=0 \textrm{ and } \gamma_{i}(B')=\sigma'_i \textrm{ for all } i \in [1,2b];
\end{array}
\end{equation}
\item  the sequences $\B_1$ and $\B_2$ have the same length and, writing $\B_1=(B_1,B_2,\ldots,B_{e})$
and $\B_2=(B'_1,B'_2,\ldots,B'_{e})$, then $\E(B_i)=\E(B_i')$ for all $i\in [1,e]$.
\end{itemize}
\end{defi}
Observe that the sequences $\B_1,\B_2$ in the previous definition do not need to be distinct.

We construct these nice pairs of sequences, starting with the case when 
$\lambda$ divides $ms$.
In particular, our sequences $\B_i$,  consisting of shiftable blocks of size $2\times s$,
are of  length $\frac{m}{2\lambda_1}$ and such that $\mu(\B_i)=\lambda_2$.
We begin with the case when $\lambda_2$ is odd.
Note that this implies that $\lambda_2$ divides $\frac{s}{2}$.

\begin{lem}\cite[Corollary 4.10 and Lemma 5.1]{MP}\label{seqB}
Let $a$ and $c$ be even integers with $a\geq 2$, $c\geq 6$ and $c \equiv 2 \pmod 4$.
Let $u$ be a divisor of $2ac$ and set $\rho=\frac{2ac}{u}+1$.
There exists a nice pair $(\tilde\B_1,\tilde\B_2)$ of sequences  of length $\frac{a}{2}$, where $\tilde \B_1$ and $\tilde \B_2$
consist of blocks of size $2\times c$, $\mu(\tilde\B_1)=\mu(\tilde\B_2)=1$ and
$$\supp(\tilde\B_1)=\supp(\tilde\B_2)=\left[ 1,ac+\left\lfloor u/2
\right\rfloor \right]
\setminus \left\{j\rho: j \in \left[1, \left\lfloor u/2 \right\rfloor \right]\right\}.$$
\end{lem}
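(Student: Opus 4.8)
The plan is to read this as a relative-Heffter filling problem of shape $2\times c$ and to build the two sequences block by block. First I would record the feasibility count. Since $\rho=\frac{2ac}{u}+1$ gives $\frac{u\rho}{2}=ac+\frac{u}{2}$, the prescribed support $\Phi'=\left[1,ac+\lfloor u/2\rfloor\right]\setminus\{j\rho:j\in[1,\lfloor u/2\rfloor]\}$ has exactly $ac$ elements. Each of $\tilde\B_1,\tilde\B_2$ is required to have length $\frac{a}{2}$ and to consist of $2\times c$ blocks, so it occupies $\frac{a}{2}\cdot 2c=ac$ cells; together with $\mu(\tilde\B_i)=1$ this forces every element of $\Phi'$ to fill, up to sign, exactly one cell in each sequence. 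Hence the task reduces to partitioning $\Phi'$ into $\frac{a}{2}$ segments of $c$ magnitudes and, for each segment, producing two fillings of a $2\times c$ array by those magnitudes (with signs): one obeying \eqref{blocchi} and one obeying \eqref{blocchiOLD}, both using the \emph{same} signed entry list.

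For a single block I would write down a base block $B$ of size $2\times c$ with $\tau_1(B)=\tau_2(B)=0$, shiftable (so each row carries $\frac{c}{2}$ positive and $\frac{c}{2}$ negative entries), and with the paired column-sum profile $\gamma_{2i-1}(B)=-\gamma_{2i}(B)$ demanded by \eqref{blocchi}; this can be produced explicitly by distributing the $2c$ magnitudes of a segment of $\Phi'$ across the two rows with a suitable alternating sign pattern. Its partner $B'$ for $\tilde\B_2$ must realize \eqref{blocchiOLD} instead, i.e.\ column sums whose odd-indexed and even-indexed subfamilies each vanish. I would obtain $B'$ by \emph{permuting the cells} of $B$ without altering any entry, so that $\E(B)=\E(B')$ holds automatically while the column contents are reshuffled from the paired profile into the balanced one. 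The hypotheses $c\equiv 2\pmod 4$ (so $\frac{c}{2}$ is odd) and $a$ even are exactly what make both profiles attainable on a single entry list.

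To cover the holed support $\Phi'$ rather than a plain interval, I would fix one such base pair and propagate it by the translation $B\mapsto B\pm x$ used throughout Section~\ref{s0k0}, noting that shifting raises every magnitude of $\supp(B)$ by $x$. Choosing the shifts $x$ in arithmetic-progression sets whose step is tied to $\rho$ makes the union $\bigcup_x \supp(B\pm x)$ an interval with gaps precisely at $\rho,2\rho,\ldots$, matching the excluded multiples $\{j\rho\}$; a slightly shortened final shift-set absorbs the floor $\lfloor u/2\rfloor$ when $u$ is odd. Because translation preserves both row sums and the two column-sum profiles, each shifted copy is again a valid pair, and concatenating the $\frac{a}{2}$ copies yields $\tilde\B_1$ and $\tilde\B_2$ with the announced support.

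The hard part is the third clause of the definition of a nice pair, namely $\E(B_i)=\E(B_i')$: one must realize two genuinely different column-sum profiles — the paired $\pm\sigma_i$ profile of \eqref{blocchi} and the balanced odd/even profile of \eqref{blocchiOLD} — on one and the same multiset of $2c$ signed entries, simultaneously for every block and compatibly with the translation scheme that carves out the holes at the multiples of $\rho$. This is precisely where the divisibility $u\mid 2ac$ and the congruence $c\equiv 2\pmod 4$ enter, and it is why the statement is taken from \cite[Corollary 4.10 and Lemma 5.1]{MP}, where these explicit fillings are constructed.
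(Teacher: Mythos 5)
The first thing to note is that the paper contains no proof of this statement to compare against: Lemma \ref{seqB} is imported verbatim from \cite[Corollary 4.10 and Lemma 5.1]{MP}, and the present authors never reprove it. With that caveat, your outline does reproduce the right architecture: the count showing the prescribed support has exactly $ac$ elements (via $\frac{u\rho}{2}=ac+\frac{u}{2}$), the partition of that support among the $\frac{a}{2}$ blocks, base blocks realizing the \eqref{blocchi} column-sum profile whose \eqref{blocchiOLD} partners are obtained by permuting cells so that $\E(B)=\E(B')$ holds automatically, and translations $B\mapsto B\pm x$ along arithmetic progressions so that the holes land exactly at the multiples of $\rho$. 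This is indeed how the analogous lemmas proved in this paper work (compare $E$ versus $E'$, $G$ versus $G'$ in Lemma \ref{s/2}, $Z$ versus $Z'$ in Lemma \ref{2odd}, $W_6$ versus $W_6'$ in Lemma \ref{non 8p}, and the shift-set bookkeeping in all of them), and it is the strategy of \cite{MP} itself.

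As a proof, however, your text has a genuine gap, and you essentially name it yourself: the existence of the base blocks is asserted (``a suitable alternating sign pattern''), never constructed, and in the final paragraph you fall back on citing \cite{MP} --- the very result being proved --- for ``these explicit fillings''. The entire content of the lemma lives exactly there: one must exhibit concrete $2\times c$ blocks for every $c\equiv 2\pmod 4$, $c\geq 6$, whose signed entry list admits both column-sum profiles, and one must design shift sets that work for \emph{every} divisor $u$ of $2ac$, with separate treatments according to the parities of $u$ and $\rho$ and the residue of $\rho$ modulo $4$ (the case analysis inside Lemma \ref{s/2}, or the blocks $U_3,U_5,V_1,V_3,V_5,V_7$ together with the $h_j,r_j$ bookkeeping inside Lemma \ref{2odd}, show how much work this costs); your appeal to ``$c\equiv 2\pmod 4$ and $a$ even are exactly what make both profiles attainable'' is an unproved claim, not a reduction. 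There is also a small but real slip: since a $2\times c$ block has $2c$ cells and $\mu=1$, the support must be partitioned into $\frac{a}{2}$ segments of $2c$ magnitudes, not $c$ as you first write (you use the correct count later). In short, your outline is faithful to the strategy but circular as a standalone argument --- though, to be fair, resting on \cite{MP} is precisely what the paper itself does.
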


\begin{cor}\label{cor odd}
Let $\lambda=\lambda_1\lambda_2$ be as in \eqref{lam}.  If $\lambda_2\neq \frac{s}{2}$ is odd, there exists a nice pair $(\B_1,\B_2)$ of sequences of length $\frac{m}{2\lambda_1}$,
where $\B_1$ and $\B_2$ consist of blocks of size $2\times s$,  $\mu(\B_1)=\mu(\B_2)=\lambda_2$ and
$$\supp(\B_1)=\supp(\B_2)=\left[ 1,\frac{ms}{\lambda}+\left\lfloor \frac{t}{2}
\right\rfloor \right]
\setminus \left\{\ell,2\ell,\ldots, \left\lfloor \frac{t}{2} \right\rfloor\ell \right\}=\Phi.$$
\end{cor}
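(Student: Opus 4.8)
The plan is to deduce the statement from Lemma \ref{seqB} by a \emph{single} application followed by a horizontal ``thickening'' of the blocks. First I would record the divisibility facts: since $\lambda_2$ is odd and divides $2s$, and $s\equiv 2\pmod 4$, I write $s=2s'$ with $s'$ odd, so that $\lambda_2$ divides $s'=\frac s2$. The key choice is to run Lemma \ref{seqB} not with blocks of width $s$, but with $c=\frac{s}{\lambda_2}$, together with $a=\frac{m}{\lambda_1}$ and $u=t$. I would then check admissibility: $a=\frac{m}{\lambda_1}=2\cdot\frac{m/2}{\lambda_1}$ is even and $\geq 2$ because $\lambda_1$ divides $\frac m2$; and $c=\frac{s}{\lambda_2}=2\cdot\frac{s'}{\lambda_2}$ is even with $c\equiv 2\pmod 4$, since $\frac{s'}{\lambda_2}$ is odd. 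The hypothesis $\lambda_2\neq\frac s2$ enters precisely here: it forces $\lambda_2$ to be a proper odd divisor of $s'$, so $\frac{s'}{\lambda_2}\geq 3$ and hence $c\geq 6$, as Lemma \ref{seqB} requires. Finally $u=t$ divides $2ac=\frac{2ms}{\lambda}$ by hypothesis, and $\rho=\frac{2ac}{u}+1=\frac{2ms}{\lambda t}+1=\ell$, so the support delivered by the lemma is $[1,\frac{ms}{\lambda}+\lfloor t/2\rfloor]\setminus\{j\ell:j\in[1,\lfloor t/2\rfloor]\}=\Phi$.

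Lemma \ref{seqB} then yields a nice pair $(\tilde\B_1,\tilde\B_2)$ of length $\frac a2=\frac{m}{2\lambda_1}$ whose blocks have size $2\times\frac{s}{\lambda_2}$, with $\mu(\tilde\B_1)=\mu(\tilde\B_2)=1$ and $\supp(\tilde\B_1)=\supp(\tilde\B_2)=\Phi$. Writing $\tilde\B_1=(\tilde B_1,\ldots,\tilde B_e)$ and $\tilde\B_2=(\tilde B'_1,\ldots,\tilde B'_e)$ with $e=\frac{m}{2\lambda_1}$, I would build the desired pair by replacing each block with $\lambda_2$ side-by-side copies of itself: let $B_i$ be the $2\times s$ block obtained by juxtaposing $\lambda_2$ copies of $\tilde B_i$, let $B'_i$ be obtained likewise from $\tilde B'_i$, and set $\B_1=(B_1,\ldots,B_e)$ and $\B_2=(B'_1,\ldots,B'_e)$. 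This keeps the length equal to $\frac{m}{2\lambda_1}$ and makes every block of size $2\times s$. Since each element of $\Phi$ occurs once, up to sign, in $\tilde\B_1$, it occurs exactly $\lambda_2$ times in $\B_1$ (and similarly for $\B_2$); hence $\mu(\B_1)=\mu(\B_2)=\lambda_2$ and $\supp(\B_1)=\supp(\B_2)=\Phi$.

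It remains to verify that $\B_1,\B_2$ is still nice, and this is where the even width of $c$ is used. Each $B_i$ has $\tau_1(B_i)=\tau_2(B_i)=\lambda_2\,\tau_1(\tilde B_i)=0$. Because $c=\frac{s}{\lambda_2}$ is even, the copy boundaries fall at even column positions, so the column-sum pattern of $\tilde B_i$ is reproduced $\lambda_2$ times in $B_i$ with its parity intact: the relation $\gamma_{2i-1}=-\gamma_{2i}$ of \eqref{blocchi} is inherited by $\B_1$ (with the $\sigma_i$ repeated $\lambda_2$ times), and the vanishing of the odd- and even-indexed column sums in \eqref{blocchiOLD} is inherited by $\B_2$, each copy contributing $0$. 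Finally $\E(B_i)=\E(B'_i)$ follows from $\E(\tilde B_i)=\E(\tilde B'_i)$. The only genuine subtlety is the parameter bookkeeping of the first paragraph: one must \emph{shrink} the block width to $\frac{s}{\lambda_2}$ and thicken by juxtaposition rather than append $\lambda_2$ extra blocks (which would force $\lambda$ to divide $m$), and one must secure $c\geq 6$, which is exactly the content of the assumption $\lambda_2\neq\frac s2$.
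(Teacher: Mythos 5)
Your proposal is correct and follows essentially the same route as the paper's own proof: both apply Lemma \ref{seqB} with $a=\frac{m}{\lambda_1}$, $c=\frac{s}{\lambda_2}$, $u=t$ (noting $\lambda_2\neq\frac{s}{2}$ guarantees $c\geq 6$ and $c\equiv 2\pmod 4$), and then thicken each block by juxtaposing $\lambda_2$ copies. Your explicit verification that the niceness conditions \eqref{blocchi} and \eqref{blocchiOLD} survive the juxtaposition (because $c$ is even) is a detail the paper leaves implicit, but the argument is the same.
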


\begin{proof}
Take $a=\frac{m}{\lambda_1}$, $c=\frac{s}{\lambda_2}$ and $u=t$.
Since $\lambda_1$ divides $\frac{m}{2}$, $a$ is a positive even integer;
since $\lambda_2\neq \frac{s}{2}$ is odd and divides $2s$, then $c$ is an even integer such that $c\geq 6$
and $c \equiv 2 \pmod 4$.
Note that $t$ divides $2ac=\frac{2ms}{\lambda_1\lambda_2}$ and $\rho=\frac{2ac}{t}+1=\frac{2ms}{\lambda t}+1=\ell$.
Hence, we can apply Lemma \ref{seqB} obtaining a nice pair $(\tilde\B_1,\tilde\B_2)$ of sequences of
length $\frac{m}{2\lambda_1}$ consisting of blocks of size $2\times \frac{s}{\lambda_2}$
such that $\mu(\tilde\B_1)=\mu(\tilde \B_2)=1$ and $\supp(\tilde \B_1)=\supp(\tilde \B_2)=\Phi$.
Now, replace every block $\tilde B$ of $\tilde \B_i$, $i=1,2$, with the block $B$ obtained juxtaposing $\lambda_2$ copies
of $\tilde B$. So, $B$ is a block of size $2\times s$ and $\mu(B)=\lambda_2$.
Call $\B_1,\B_2$ the two sequences so obtained. It follows that the pair $(\B_1,\B_2)$ satisfies the required properties.
\end{proof}

Now we consider the case when $\lambda_2=\frac{s}{2}$.

\begin{lem}\label{s/2}
Let $\lambda=\lambda_1\lambda_2$ be as in \eqref{lam} with $\lambda_2=\frac{s}{2}$.
There exists a nice pair $(\B_1,\B_2)$ of sequences of length $\frac{m}{2\lambda_1}$, where $\B_1$ and $\B_2$
consist of  blocks of size $2\times s$,  $\mu(\B_1)=\mu(\B_2)=\frac{s}{2}$ and
$\supp(\B_1)=\supp(\B_2)=\Phi$.
\end{lem}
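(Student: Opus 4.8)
The plan is to build the nice pair by hand, since the reduction used in Corollary \ref{cor odd} breaks down here: with $\lambda_2=\frac{s}{2}$ one would have to apply Lemma \ref{seqB} with $c=\frac{s}{\lambda_2}=2$, which is excluded by the hypothesis $c\geq 6$ (and indeed a $2\times 2$ shiftable block with zero row sums has rows $(x,-x)$ and $(w,-w)$, so $\mu\geq 2$ and no $\mu=1$ atom exists). Note that $\lambda_2=\frac{s}{2}$ is odd because $s\equiv 2\pmod 4$. I would first reduce the statement to a partition of $\Phi$ into $\frac{m}{2\lambda_1}$ \emph{balanced quadruples}, i.e. sets $\{p_1<p_2<p_3<p_4\}\subset\Phi$ with $p_1+p_4=p_2+p_3$, and then attach to each quadruple one $2\times s$ block for $\B_1$ and one for $\B_2$. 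Such a partition is produced by exactly the set-theoretic arguments in the proofs of Lemmas \ref{k4-1}, \ref{k4-2} and \ref{k4-3}, distinguishing the same cases on $t$ (and hence on the parity of $\ell$): one uses the translated quadruples $\{4x+1,4x+2,4x+3,4x+4\}$, $\{4x+1,4x+2,4x+\ell+1,4x+\ell+2\}$ or $\{4x+1,4x+\ell+1,4x+2\ell+1,4x+3\ell+1\}$; each of these is balanced, the translates are pairwise disjoint, they avoid the holes $\ell,2\ell,\dots,\lfloor t/2\rfloor\ell$, and their union is $\Phi$.

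For a fixed balanced quadruple I would realize each block as a concatenation
$$B=C\con\Big(\tfrac{s-6}{4}\ast P\Big),\qquad
P=\begin{pmatrix} p_1 & p_4 & -p_2 & -p_3 \\ -p_1 & -p_4 & p_2 & p_3 \end{pmatrix},$$
where $C$ is a $2\times 6$ ``core'' in which each $p_i$ occurs exactly three times and $P$ is the $2\times 4$ pad. Here $\frac{s-6}{4}$ is a nonnegative integer since $s\equiv 2\pmod 4$ and $s\geq 6$. Every column of $P$ sums to $0$ (using $p_1+p_4=p_2+p_3$), so $P$ satisfies both \eqref{blocchi} and \eqref{blocchiOLD} for free and contributes each value twice; hence each value occurs $3+2\cdot\frac{s-6}{4}=\frac{s}{2}$ times in $B$, giving $\mu(\B_1)=\mu(\B_2)=\frac{s}{2}$, while the length and the support come out as required. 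The whole point is therefore to construct the core. For $\B_1$ I would take the $3$-regular multigraph on $\{p_1,p_2,p_3,p_4\}$ whose edges are two copies each of $\{p_1,p_4\}$ and $\{p_2,p_3\}$ together with $\{p_1,p_2\}$ and $\{p_3,p_4\}$, turn each edge into a column, and orient the edges so that the six column sums split into the three opposite pairs required by \eqref{blocchi}; this is possible precisely because $p_1+p_4=p_2+p_3$ forces $p_2-p_1=p_4-p_3$, so the two ``short'' edges carry equal differences. The $\B_2$-core uses the \emph{same} signed entries but a different matching of positive and negative values into columns — pairing the extreme values into the zero-sum columns of weights $2p_1$ and $2p_4$ together with one column of weight $p_1+p_4$ — which makes a triple of column sums vanish and hence satisfies \eqref{blocchiOLD}.

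The main obstacle is exactly this core, and its difficulty is caused by the parity $\frac{s}{2}$ \emph{odd}: each value must appear an odd number of times, so the block cannot be cut into symmetric $2\times 2$ column pairs $\left(\begin{smallmatrix} x & -x\\ w & -w\end{smallmatrix}\right)$ (these contribute every value an even number of times), and an irreducible odd core is forced. The genuinely delicate case is the non-arithmetic quadruple $\{1,2,\ell+1,\ell+2\}$ needed when $\ell\equiv 3\pmod 4$: there the naive attempts to balance the two rows of the core hit a parity obstruction, and one has to choose both the multigraph and the two matchings carefully, verifying the zero row sums through the relation $p_1+p_4=p_2+p_3$. Once the two cores are exhibited for each of the three quadruple shapes, assembling the blocks over all quadruples produces the sequences $\B_1,\B_2$, and then checking \eqref{blocchi}, \eqref{blocchiOLD} and $\E(B_i)=\E(B_i')$ block-by-block is routine.
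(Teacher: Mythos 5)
Your plan has the same architecture as the paper's own proof. The paper also tiles $\Phi$ by balanced quadruples of one fixed shape per case (consecutive when $\ell\equiv 1\pmod 4$, a shape straddling the holes when $\ell\equiv 3\pmod 4$, arithmetic with difference $\ell$ when $\ell$ is even — the same case split you inherit from Lemmas \ref{k4-1}, \ref{k4-2}, \ref{k4-3}), and it also realizes each quadruple as a $2\times s$ block obtained by juxtaposing a $2\times 6$ core in which each value occurs three times with $\frac{s-6}{4}$ copies of a $2\times 4$ pad in which each value occurs twice (the pairs $(E,A)$, $(G,F)$, $(L,H)$ in the paper). Your pad $P$, the multiplicity count $3+2\cdot\frac{s-6}{4}=\frac{s}{2}$, and the length and support bookkeeping are all correct. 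Your $\B_1$-core also exists: the multigraph with edges $2\times\{p_1,p_4\}$, $2\times\{p_2,p_3\}$, $\{p_1,p_2\}$, $\{p_3,p_4\}$ admits the orientation with rows $(-p_4,-p_1,p_2,-p_2,p_1,p_4)$ and $(p_1,p_4,-p_3,p_3,-p_2,-p_3)$, which satisfies \eqref{blocchi} with zero row sums for every balanced quadruple.

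The genuine gap is the $\B_2$-core, i.e.\ exactly the objects the paper must exhibit explicitly as $E'$, $G'$. Your recipe there is not merely unproven; it provably fails in the one case where it is indispensable. The opposite-column-sum constraints force any orientation of your multigraph to have sign multiset $p_1:\{+,+,-\}$, $p_2:\{+,-,-\}$, $p_3:\{+,-,-\}$, $p_4:\{+,+,-\}$ (or its global negative), so after forming your prescribed columns $(p_1,-p_1)$, $(p_4,-p_4)$, $(p_1,p_4)$, the six leftover entries are $p_2,-p_2,-p_2,p_3,-p_3,-p_3$. For the shape $\{x+1,x+2,x+\ell+1,x+\ell+2\}$ with $\ell$ odd, the values $p_1,p_4$ lie in one parity class and $p_2,p_3$ in the other; hence your three prescribed columns are parity-homogeneous in one class and the three leftover columns in the other, so each row receives exactly three entries of each parity and its sum is odd — never zero, for every such quadruple and every grouping of the leftover entries. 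This is the parity obstruction you yourself mention, but it kills your stated recipe rather than just ``naive attempts,'' and the ``careful choice of multigraph and matchings'' that would repair it is precisely the missing content: in your framework this shape cannot be avoided, since for $\ell\equiv 3\pmod 4$ the runs between consecutive holes have length $\equiv 2\pmod 4$ and quadruples straddling the holes are forced. The paper fills this hole with the explicit pairs $(E,E')$ and $(G,G')$, whose column structure is different from yours (the zero-sum columns sit on $p_3$ and $p_4$, not on $p_1$ and $p_4$, so the remaining columns mix the two parity classes), and which are moreover built so that $E$ and $G$ share one column-sum pattern and can be mixed in a single sequence.
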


\begin{proof}
We first consider the case when $\ell$ is odd.
Consider the following shiftable blocks:

\begin{footnotesize}
$$\begin{array}{rclcrcl}
   A& =& \begin{array}{|c|c|c|c|}\hline
  1 & -2 & -3 & 4 \\\hline
 -1 &  2 & 3 & -4  \\ \hline
      \end{array}\;, & \quad &
       F& =& \begin{array}{|c|c|c|c|}\hline
  1 & -2 & -4 & 5 \\\hline
 -1 &  2 & 4 & -5  \\ \hline
      \end{array}\;,\\[8pt]
E & =& \begin{array}{|c|c|c|c|c|c|}\hline
  1 & -1 &  3 & -4 & -3 &  4 \\\hline
 -2 &  2 & -1 &  2 &  3 & -4 \\\hline
\end{array}\;,&&
G & =& \begin{array}{|c|c|c|c|c|c|}\hline
  4 &  2 & -2 &  2 & -1 & -5 \\\hline
 -5 & -1 &  4 & -4 &  1 &  5   \\\hline
\end{array}\;,\\[8pt]
E' & =& \begin{array}{|c|c|c|c|c|c|}\hline
  1 &  3 & -1  & -4 & -3 &  4 \\\hline
 -2 & -1 &  2  &  2 &  3 & -4 \\\hline
\end{array}\;,&&
G' & =& \begin{array}{|c|c|c|c|c|c|}\hline
  4 &  -2 &  2 &  2 & -1 & -5 \\\hline
 -5 &   4 & -1 & -4 &  1 &  5   \\\hline
\end{array}\;.
\end{array}$$
\end{footnotesize}

Note that $A$ and $F$ satisfy both \eqref{blocchi} and \eqref{blocchiOLD}; $E$ and $G$ satisfy \eqref{blocchi};
$E'$ and $G'$ satisfy \eqref{blocchiOLD}. We first construct the sequence $\B_1$.
To this purpose, take the block $B$ obtained juxtaposing the block $E$ and $\frac{s-6}{4}$ copies of the block $A$.
We obtain a block of size $2\times s$ such that $\supp(B)=[1,4]$ and $\mu(B)=\frac{s}{2}$.
Also, let $C$ be the block obtained juxtaposing the block $G$ and $\frac{s-6}{4}$ copies of the block $F$.
Then $C$ is a block of size $2\times s$ such that $\supp(C)=\{1,2,4,5 \}$ and $\mu(C)=\frac{s}{2}$.

Assume $\ell\equiv 1 \pmod 4$. Let $S=\left(B,B\pm 4, B \pm 8,\ldots,B\pm  4\frac{\ell-5}{4} \right)$.
Then $|S|=\frac{\ell-1}{4}$ and $\supp(S)=[1,\ell]\setminus\{\ell\}$.
If $t$ is even, take
$$\B_1=S\con (S\pm \ell)\con (S \pm 2 \ell)\con \ldots\con \left(S \pm \frac{t-2}{2}\ell \right).$$
If $t$ is odd, then $\ell-1=8\frac{m}{2\lambda_1 t}\equiv 0 \pmod 8$.
Let
$$Y=\left(B, B\pm 4 ,B\pm 8 ,\ldots,
B\pm \left(4\frac{\ell-9}{8}\right) \right)$$
and
$$\B_1=S\con (S\pm \ell)\con (S \pm 2 \ell)\con \ldots\con \left(S \pm \frac{t-3}{2}\ell \right)\con
\left(Y\pm \frac{t-1}{2}\ell\right).$$
In both cases, $\B_1$ is a sequence of length $\frac{(\ell-1)t}{8}=\frac{m}{2\lambda_1}$ such that
$\mu(\B_1)=\frac{s}{2}$ and $\supp(\B_1)=\Phi$.
The sequence $\B_2$ is obtained by replacing in $\B_1$ the block $E$ with the block $E'$.

Assume $\ell\equiv 3 \pmod 4$. Note that, in this case, $8\frac{m}{2\lambda_1 t}\equiv 2 \pmod 4$ and so $t\equiv 0\pmod 4$.
Let $S=\left(B,B\pm 4, B \pm 8,\ldots,B\pm  4\frac{\ell-7}{4},
C\pm (\ell-3), B\pm (\ell+2), B\pm (\ell+6), B\pm (\ell+10), \right.$ $\left. \ldots,B\pm (2\ell-5)\right)$.
Then $|S|=\frac{\ell-1}{2}$ and $\supp(S)=[1,2\ell]\setminus\{\ell,2\ell\}$.
Define
$$\B_1=S\con (S\pm 2\ell)\con (S \pm 4 \ell)\con \ldots\con \left(S \pm 2\frac{t-4}{4}\ell \right).$$
So, $\B_1$ is a sequence of length $\frac{(\ell-1)t}{8}=\frac{m}{2\lambda_1}$ such that
$\mu(\B_1)=\frac{s}{2}$ and $\supp(\B_1)=\Phi$.
The sequence $\B_2$ is obtained by replacing in $\B_1$ the block $G$ with the block $G'$.

Finally, assume that $\ell$ is even. Note that, in this case, $t\equiv 0\pmod 8$.
Consider the shiftable blocks:

\begin{footnotesize}
$$\begin{array}{rcl}
H& =& \begin{array}{|c|c|c|c|}\hline
  1 & -(\ell+1) & -(2\ell+1) &   3\ell+1 \\\hline
 -1 &   \ell+1  &   2\ell+1  & -(3\ell+1)  \\ \hline
      \end{array}\;, \\[8pt]
L & =& \begin{array}{|c|c|c|c|c|c|}\hline
  1        &  3\ell+1   &  -(\ell+1) &     \ell+1  & -1 & -(3\ell+1) \\\hline
 -(\ell+1) & -(2\ell+1) &    2\ell+1 &  -(2\ell+1) &  1 &   3\ell+1  \\\hline
\end{array}\;.
\end{array}$$
\end{footnotesize}

Note that the blocks $H$ and $L$ satisfy both \eqref{blocchi} and \eqref{blocchiOLD}.
Let $K$ be the block obtained juxtaposing the block $L$ and $\frac{s-6}{4}$ copies of the block $H$.
Then $K$ is a block of size $2\times s$ such that $\supp(K)=\{1,\ell+1,2\ell+1,3\ell+1 \}$ and $\mu(K)=\frac{s}{2}$.
Let $S=\left(K,K\pm 1, K \pm 2, \ldots, K\pm (\ell-2)\right)$.
Then $|S|=\ell-1$ and $\supp(S)=[1,4\ell]\setminus\{\ell,2\ell,3\ell,4\ell\}$.
Define
$$\B_1=\B_2=S\con (S\pm 4\ell)\con (S \pm 8 \ell)\con \ldots\con \left(S \pm 4\frac{t-8}{8}\ell \right).$$
So, $\B_i$ is a sequence of length $\frac{(\ell-1)t}{8}=\frac{m}{2\lambda_1}$ such that
$\mu(\B_i)=\frac{s}{2}$ and $\supp(\B_i)=\Phi$.
\end{proof}

For instance, taking in the previous lemma, $m=30$, $s=10$, $\lambda_1=3$ and $t=5$,
we have $\ell=9$. The sequence $\B_1$ consists of the following
five shiftable blocks:

\begin{footnotesize}
 $$\begin{array}{rcl}
B_1 & =& \begin{array}{|c|c|c|c|c|c| c|c|c|c|}\hline
 1 & -1 &  3 & -4 & -3 &  4 &  1 & -2 & -3 &  4 \\ \hline
-2 &  2 & -1 &  2 &  3 & -4 & -1 &  2 &  3 & -4 \\ \hline
\end{array}\;,\\[8pt]
B_2 & = &
 \begin{array}{|c|c|c|c|c|c| c|c|c|c|c|c| c|c|c|c|c|c|}\hline
 5 & -5 &  7 & -8 & -7 &  8 &  5 & -6 & -7 &  8 \\ \hline
-6 &  6 & -5 &  6 &  7 & -8 & -5 &  6 &  7 & -8 \\ \hline
\end{array}\;, \\[8pt]
B_3 & =& \begin{array}{|c|c|c|c|c|c| c|c|c|c|}\hline
 10 & -10 &  12 & -13 & -12 &  13 &  10 & -11 & -12 &  13 \\ \hline
-11 &  11 & -10 &  11 &  12 & -13 & -10 &  11 &  12 & -13 \\ \hline
\end{array}\;,\\[8pt]
B_4 & = &
 \begin{array}{|c|c|c|c|c|c| c|c|c|c|c|c| c|c|c|c|c|c|}\hline
 14 & -14 &  16 & -17 & -16 &  17 &  14 & -15 & -16 &  17 \\ \hline
-15 &  15 & -14 &  15 &  16 & -17 & -14 &  15 &  16 & -17 \\ \hline
\end{array}\;, \\[8pt]
B_5 & = &
 \begin{array}{|c|c|c|c|c|c| c|c|c|c|c|c| c|c|c|c|c|c|}\hline
 19 & -19 &  21 & -22 & -21 &  22 &  19 & -20 & -21 &  22 \\ \hline
-20 &  20 & -19 &  20 &  21 & -22 & -19 &  20 &  21 & -22 \\ \hline
\end{array}\;.
\end{array}$$
\end{footnotesize}

We now deal with the case $\lambda_2\equiv 2\pmod 4$.

\begin{lem}\label{magg 6}
Let $\lambda=\lambda_1\lambda_2$ be as in \eqref{lam} with $\lambda_2\equiv 2\pmod 4$ and $\lambda_2\geq 6$.
There exists a nice pair $(\B,\B)$, where $\B$ is a sequence of length $\frac{m}{2\lambda_1}$
consisting of  blocks of size $2\times s$ such that $\mu(\B)=\lambda_2$ and
$\supp(\B)=\Phi$.
\end{lem}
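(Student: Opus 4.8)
The plan is to follow the blueprint of the proof of Lemma~\ref{s/2}, splitting into the sub-cases $\ell\equiv 1\pmod 4$, $\ell\equiv 3\pmod 4$ and $\ell$ even, but with one decisive modification forced by the fact that here we must output a pair of the special shape $(\B,\B)$, i.e.\ a \emph{single} sequence playing both roles. First I would record a structural reformulation: a shiftable $2\times 2b$ block satisfies \emph{both} \eqref{blocchi} and \eqref{blocchiOLD} exactly when it satisfies \eqref{blocchi} together with the extra constraint $\sum_{i=1}^{b}\sigma_i=0$. Indeed, \eqref{blocchi} already forces $\gamma_{2i}=-\gamma_{2i-1}=-\sigma_i$, so the two vanishing conditions of \eqref{blocchiOLD} collapse to the single requirement $\sum_i\sigma_i=0$. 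Hence the whole construction reduces to assembling every block out of such ``both-condition'' blocks; the blocks $A$ and $F$ of Lemma~\ref{s/2} qualify, since all their column sums are $0$.

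The point that makes the case $\lambda_2\equiv 2\pmod 4$ special is that the even multiplicity lets us symmetrise. Writing $\lambda_2=2\lambda_2'$ with $\lambda_2'$ odd and $\lambda_2'\geq 3$, I would build a single \emph{core} block $C$ of size $2\times w$, where $w=\frac{2s}{\lambda_2}\equiv 2\pmod 4$ (note $w\neq 4$, as $\lambda_2\neq\frac{s}{2}$, so generically $w\geq 6$; the borderline $w=2$, i.e.\ $\lambda_2=s$, is handled by a separate tiny core), with $\mu(C)=2$ and satisfying both conditions. For the part of $C$ of width $\equiv 0 \pmod 4$ I would juxtapose shifted copies of $A$ (and $F$), whose column sums are all $0$, so that both conditions hold automatically and the support grows by $4$ at each step. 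The remaining width $\equiv 2\pmod 4$ is supplied by one explicit $2\times 6$ both-condition block of multiplicity $2$: this is precisely where the evenness of $\lambda_2$ is exploited, since a gadget satisfying only \eqref{blocchi} with $\sum_i\sigma_i\neq 0$ can be paired with its column-swapped copy (which replaces each $\sigma_i$ by $-\sigma_i$ while leaving $\E$ unchanged up to sign), producing a both-condition block of doubled multiplicity.

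Next I would juxtapose $\lambda_2'$ copies of $C$ to obtain a seed block of width $w\lambda_2'=s$ and $\mu=\lambda_2$, still satisfying both conditions (concatenation keeps $\gamma_{2i-1}=-\gamma_{2i}$ and multiplies $\sum_i\sigma_i=0$ by $\lambda_2'$). Finally, exactly as in Lemma~\ref{s/2}, I would shift the seed by $4,\,2\ell,\,4\ell$ according to the sub-case and concatenate the shifted pieces across the $t$-structure so that the union of supports is $\Phi$. The count works out uniformly, since $\frac{\ell-1}{w}=\frac{m}{\lambda_1 t}$ independently of $\lambda_2$, giving a sequence of length $\frac{m}{2\lambda_1}$; because every block in sight satisfies both conditions, I may set $\B_1=\B_2=\B$, which yields the nice pair $(\B,\B)$.

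The main obstacle I expect is twofold and both parts live in the construction of $C$ and its tiling. The deep reason all these constructions are delicate is the parity wall $s\equiv 2\pmod 4$: since $\frac{s}{\lambda_2}$ is odd one cannot reach multiplicity $\lambda_2$ at width $s$ by a clean juxtaposition of Lemma~\ref{seqB}-type blocks (which need even width), so an explicit width-$\equiv 2$ both-condition gadget of even multiplicity is unavoidable, and manufacturing it while simultaneously enforcing \eqref{blocchi} \emph{and} $\sum_i\sigma_i=0$ is the crux. The second, more bookkeeping-heavy, difficulty is aligning the gaps of $\Phi$ (the deleted multiples of $\ell$) with the shifts of the seed: this is what forces the separate treatment of $\ell\equiv 1,3\pmod 4$ and $\ell$ even, and in each regime one must check that the chosen shift step and the number of concatenated copies tile $\Phi$ exactly and produce length $\frac{m}{2\lambda_1}$.
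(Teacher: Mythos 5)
Your opening reduction is correct and matches the paper's strategy in spirit: a shiftable $2\times 2b$ block with zero row sums satisfying \eqref{blocchi} also satisfies \eqref{blocchiOLD} exactly when $\sum_{i=1}^b\sigma_i=0$, and building everything out of such ``both-condition'' gadgets is indeed how one obtains a pair of the form $(\B,\B)$. The genuine gap is in how you distribute multiplicity versus support. Your core $C$ has width $w=\frac{2s}{\lambda_2}$ and $\mu(C)=2$, hence support of size $w$; each width-$s$ seed then covers one translate of a fixed $w$-element set, and your tiling of $\Phi$ requires $\frac{\ell-1}{w}=\frac{m}{\lambda_1 t}$ seeds per period $[i\ell+1,(i+1)\ell]$. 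But $\frac{m}{\lambda_1 t}$ is not an integer in general, because $t$ is only assumed to divide $\frac{2ms}{\lambda}$, not $\frac{m}{\lambda_1}$. Concretely, take $m=10$, $s=30$, $\lambda_1=1$, $\lambda_2=10$, $t=15$: all hypotheses of the lemma hold ($\lambda=10$ divides $ms=300$ and $t$ divides $\frac{2ms}{\lambda}=60$), yet $\ell=5$, $w=6$, and your count equals $\frac{2}{3}$. Worse, here $\Phi=[1,37]\setminus\{5,10,\ldots,35\}$ has maximal runs of consecutive integers of length $\ell-1=4<6=w$, so every seed support must straddle at least one excluded multiple of $\ell$; a single fixed shape shifted by fixed steps --- which is exactly what your ``shift the seed by $4,2\ell,4\ell$'' scheme produces --- cannot partition $\Phi$, and any covering must realign the internal hole of the gadget with the multiples of $\ell$ at each placement. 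That realignment is precisely the heavy machinery of the position-dependent blocks $V_1,V_3,V_5,V_7$ and the sequences $S_j'$ in Lemma \ref{2odd}, which the paper needs only for $\lambda_2=2$ and only under extra hypotheses on $t$ (with Lemmas \ref{8p} and \ref{non 8p} covering the remaining $t$ via an odd prime divisor of $s$). Your fallback ``tiny core'' for $w=2$ (i.e.\ $\lambda_2=s$) is also broken: for $b=1$ condition \eqref{blocchiOLD} forces both column sums to vanish, so a $2\times 2$ both-condition shiftable block must have entries $a,-a$ over $-a,a$, which has $\mu=4$, never $2$, and $4\nmid\lambda_2$.

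The paper's proof avoids this wall by inverting your distribution: instead of multiplicity $2$ spread over a support of size $w$, it concentrates multiplicity $\lambda_2$ on a support of size $2$. Writing $\lambda_2=6+4\cdot\frac{\lambda_2-6}{4}$ (this is where $\lambda_2\equiv 2\pmod 4$ and $\lambda_2\geq 6$ enter), it juxtaposes one $2\times 6$ both-condition gadget ($E$, or $G$ when $\ell$ is even) with $\frac{\lambda_2-6}{4}$ copies of a $2\times 4$ gadget ($A$, resp.\ $F$), obtaining a $2\times\lambda_2$ block with support $\{1,2\}$ (resp.\ $\{1,\ell+1\}$) and $\mu=\lambda_2$. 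Size-$2$ supports tile $\Phi$ with no divisibility constraint beyond the parity of $\ell$: consecutive pairs fill $[i\ell+1,(i+1)\ell-1]$ when $\ell$ is odd, and pairs $\{x,x+\ell\}$ fill $[2i\ell+1,(2i+2)\ell]$ minus its two holes when $\ell$ is even --- which is also why the paper needs only the dichotomy $\ell$ odd/even rather than your three sub-cases borrowed from Lemma \ref{s/2}. The $2\times s$ blocks are assembled only at the very end, by grouping $q=\frac{s}{\lambda_2}$ consecutive width-$\lambda_2$ blocks. To salvage your scheme you would have to import the full adaptive apparatus of Lemmas \ref{2odd}, \ref{8p} and \ref{non 8p}; the point of Lemma \ref{magg 6} is precisely that for $\lambda_2\geq 6$ this is unnecessary.
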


\begin{proof}
We first consider the case when $\ell$ is odd.
Consider the following shiftable blocks:

\begin{footnotesize}
$$\begin{array}{rclcrcl}
   A& =& \begin{array}{|c|c|c|c|}\hline
  1 & -1 & 2 & -2 \\\hline
 -1 &  1 & -2 & 2  \\ \hline
      \end{array}\;, & \quad &
E & =& \begin{array}{|c|c|c|c|c|c|}\hline
  1 &  2 & -1 &  1 & -1 & -2 \\\hline
 -2 & -1 &  2 & -2 &  1 &  2 \\\hline
\end{array}\;.
\end{array}$$
\end{footnotesize}

Note that $A$ and $E$ satisfy both \eqref{blocchi} and \eqref{blocchiOLD}.
To construct the sequence $\B$, first take the block $C$ obtained juxtaposing the block $E$ and $\frac{\lambda_2-6}{4}$
copies of the block $A$.
We obtain a block of size $2\times \lambda_2$ such that $\supp(C)=\{1,2\}$ and $\mu(C)=\lambda_2$.
Consider the sequence  $S=\left(C, C\pm 2, C\pm 4,\ldots, C\pm  2\frac{\ell-3}{2} \right)$.
Then $|S|=\frac{\ell-1}{2}$, $\mu(S)=\lambda_2$  and $\supp(S)=[1,\ell]\setminus\{\ell\}$.
If $t$ is even, take
$$\tilde\B=S\con (S\pm \ell)\con (S \pm 2 \ell)\con \ldots\con \left(S \pm \frac{t-2}{2}\ell \right).$$
If $t$ is odd, then $\ell-1=4\frac{\frac{m}{2\lambda_1}\cdot \frac{s}{\lambda_2}}{ t}\equiv 0 \pmod 4$.
Let
$$Y=\left(C, C\pm 2, C\pm 4,\ldots,
C\pm \left(2\frac{\ell-5}{4}\right) \right)$$
and
$$\tilde \B= S \con (S\pm \ell)\con (S \pm 2 \ell)\con \ldots\con \left(S \pm \frac{t-3}{2}\ell \right)\con
\left(Y\pm \frac{t-1}{2}\ell\right).$$
In both cases, $\tilde \B$ is a sequence of length $\frac{(\ell-1)t}{4}=\frac{ms}{2\lambda}$ such that
$\mu(\tilde \B)=\lambda_2$ and $\supp(\tilde\B)=\Phi$.

Suppose now that  $\ell$ is even.
Note that, in this case, $t\equiv 0\pmod 4$.
Consider the shiftable blocks:

\begin{footnotesize}
$$\begin{array}{rcl}
F& =& \begin{array}{|c|c|c|c|}\hline
  1 & -1 &    \ell+1   &  -(\ell+1) \\\hline
 -1 &  1  & -(\ell+1)  &    \ell+1  \\ \hline
      \end{array}\;, \\[8pt]
G & =& \begin{array}{|c|c|c|c|c|c|}\hline
  1        &   \ell+1   &    -1   &      1      &  -1 &  -(\ell+1) \\\hline
 -(\ell+1) &    -1      &  \ell+1 &   -(\ell+1) &   1 &    \ell+1  \\\hline
\end{array}\;.
\end{array}$$
\end{footnotesize}

Note that the blocks $F$ and $G$ satisfy both \eqref{blocchi} and \eqref{blocchiOLD}.
Take the block $H$ obtained juxtaposing the block $G$ and $\frac{\lambda_2-6}{4}$
copies of the block $F$.
We obtain a block of size $2\times \lambda_2$ such that $\supp(H)=\{1,\ell+1\}$ and $\mu(H)=\lambda_2$.
Consider the sequence  $S=\left(H, H\pm 1, H\pm 2,\ldots, H\pm\right.$ $\left. (\ell-2) \right)$.
Then $|S|=\ell-1$, $\mu(S)=\lambda_2$  and $\supp(S)=[1,2\ell]\setminus\{\ell,2\ell\}$.
Take
$$\tilde\B=S\con (S\pm 2\ell)\con (S \pm 4 \ell)\con \ldots\con \left(S \pm 2\frac{t-4}{4}\ell \right).$$
Hence, $\tilde \B$ is a sequence of length $\frac{(\ell-1)t}{4}=\frac{ms}{2\lambda}$ such that
$\mu(\tilde \B)=\lambda_2$ and $\supp(\tilde\B)=\Phi$.

Finally, for every $\ell$, writing $\tilde \B=\left(K_1,K_2,\ldots,K_{\frac{ms}{2\lambda}} \right)$
and $q=\frac{s}{\lambda_2}$, for every $i \in \left[1, \frac{m}{2\lambda_1}\right]$ we construct
the block $B_i$ juxtaposing
the $q$ blocks $K_{1+(i-1)q},K_{2+(i-1)q},\ldots,K_{iq}$. 
The blocks $B_i$ are of size $2\times q\lambda_2$, that is,
of size $2\times s$.
So, we can set $\B=\left(B_1,B_2,B_3,\ldots,B_{\frac{m}{2\lambda_1}} \right)$.
\end{proof}

For instance, taking in the previous lemma, $m=84$, $s=10$, $\lambda_1=7$, $\lambda_2=10$ and $t=8$,
we have $\ell=4$. The sequence $\B$ consists of the following
six shiftable blocks:

\begin{footnotesize}
 $$\begin{array}{rcl}
B_1 & =& \begin{array}{|c|c|c|c|c|c| c|c|c|c|}\hline
 1 &  5 & -1 &  1 & -1 & -5 &  1 & -1 &  5 & -5 \\ \hline
-5 & -1 &  5 & -5 &  1 &  5 & -1 &  1 & -5 &  5 \\ \hline
\end{array}\;,\\[8pt]
B_2 & = &
 \begin{array}{|c|c|c|c|c|c| c|c|c|c|c|c| c|c|c|c|c|c|}\hline
 2 &  6 & -2 &  2 & -2 & -6 &  2 & -2 &  6 & -6 \\ \hline
-6 & -2 &  6 & -6 &  2 &  6 & -2 &  2 & -6 &  6 \\ \hline
\end{array}\;, \\[8pt]
B_3 & = &
 \begin{array}{|c|c|c|c|c|c| c|c|c|c|c|c| c|c|c|c|c|c|}\hline
 3 &  7 & -3 &  3 & -3 & -7 &  3 & -3 &  7 & -7 \\ \hline
-7 & -3 &  7 & -7 &  3 &  7 & -3 &  3 & -7 &  7 \\ \hline
\end{array}\;, \\[8pt]
B_4 & = &
 \begin{array}{|c|c|c|c|c|c| c|c|c|c|c|c| c|c|c|c|c|c|}\hline
 9 &  13 &  -9 &   9 & -9 & -13 &  9 & -9 &  13 & -13 \\ \hline
-13 & -9 &  13 & -13 &  9 &  13 & -9 &  9 & -13 &  13 \\ \hline
\end{array}\;, \\[8pt]
B_5 & = &
 \begin{array}{|c|c|c|c|c|c| c|c|c|c|c|c| c|c|c|c|c|c|}\hline
 10 &  14 & -10 &  10 & -10 & -14 &  10 & -10 &  14 & -14 \\ \hline
-14 & -10 &  14 & -14 &  10 &  14 & -10 &  10 & -14 &  14 \\ \hline
\end{array}\;, \\[8pt]
B_6 & = &
 \begin{array}{|c|c|c|c|c|c| c|c|c|c|c|c| c|c|c|c|c|c|}\hline
 11 &  15 & -11 &  11 & -11 & -15 &  11 & -11 &  15 & -15 \\ \hline
-15 & -11 &  15 & -15 &  11 &  15 & -11 &  11 & -15 &  15 \\ \hline
\end{array}\;.
\end{array}$$
\end{footnotesize}

We now deal with the case $\lambda_2=2$.

\begin{lem}\label{2odd}
Let $\lambda=\lambda_1\lambda_2$ be as in \eqref{lam} with $\lambda_2=2$.
Suppose that $t$ divides $\frac{ms}{2\lambda_1}$.
There exists a nice pair $(\B_1,\B_2)$ of sequences of length $\frac{m}{2\lambda_1}$, where $\B_1$ and $\B_2$
consist of  blocks of size $2\times s$, $\mu(\B_1)=\mu(\B_2)=2$ and
$\supp(\B_1)=\supp(\B_2)=\Phi$.
\end{lem}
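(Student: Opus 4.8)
The plan is to mimic the structure of Lemma \ref{s/2}, but with multiplicity $2$ instead of $\frac{s}{2}$, so that here each $2\times s$ block carries a \emph{large} support (of size $s$) rather than a small one. First I record the reductions forced by the hypotheses. Since $\lambda_2=2$ we have $\lambda=2\lambda_1$, and since $t$ divides $\frac{ms}{2\lambda_1}$ the integer $\frac{2ms}{\lambda t}=\frac{ms}{\lambda_1 t}$ is even, so $\ell=\frac{2ms}{\lambda t}+1$ is odd; moreover $\lambda=2\lambda_1$ divides $m$, hence divides $ms$, so $\Phi=\left[1,\frac{ms}{\lambda}+\lfloor t/2\rfloor\right]\setminus\{\ell,2\ell,\ldots,\lfloor t/2\rfloor\ell\}$ splits into windows of $\ell-1$ consecutive integers separated by the forbidden multiples of $\ell$. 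Because $s\equiv 2\pmod 4$, I write $s=6+4r$ with $r=\frac{s-6}{4}\geq 0$, and I build every $2\times s$ block as the juxtaposition of a single $2\times 6$ \emph{core} block and $r$ copies of a $2\times 4$ block.

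Next I would exhibit the elementary blocks, in direct analogy with $A,E,E'$ of Lemma \ref{s/2}. For the $2\times 4$ block I take $A$ with $\mu(A)=2$, support four consecutive integers and \emph{all} row and column sums equal to $0$ (e.g. rows $1,-2,-3,4$ and $-1,2,3,-4$); such a block automatically satisfies both \eqref{blocchi} and \eqref{blocchiOLD}, and its shifts realise any four consecutive values. I also need a \emph{gapped} variant of $A$ whose support is four consecutive integers with the central value deleted (say $\{a,a+1,a+3,a+4\}$, still balanced since $a+(a+4)=(a+1)+(a+3)$), again with zero column sums; this is what lets a block step over a forbidden multiple of $\ell$. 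For the core I produce two $2\times 6$ blocks $P$ and $P'$ with $\mu=2$, support six consecutive integers and $\E(P)=\E(P')$, where $P$ satisfies \eqref{blocchi} and $P'$ satisfies \eqref{blocchiOLD}; their (necessarily) nonzero column sums absorb the parity defect—the sum $1+2+\cdots+s=\frac{s(s+1)}{2}$ being odd for $s\equiv 2\pmod 4$—that would otherwise forbid an all-zero-column-sum $2\times s$ block. Where a forbidden multiple falls inside the core's span I would use gapped versions of $P,P'$ as well.

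With these blocks in hand the assembly follows Lemma \ref{s/2} in spirit. Juxtaposing $P$ (resp. $P'$) with $r$ suitably shifted copies of $A$ gives a base $2\times s$ block $B$ (resp. $B'$) whose support is a run of $s$ consecutive integers, with $\mu=2$, satisfying \eqref{blocchi} (resp. \eqref{blocchiOLD}) and $\E(B)=\E(B')$. I then form a base sequence $S$ of such blocks—shifting by the support size and inserting the gapped blocks at each forbidden multiple so that the skip is always centrally placed—covering a maximal run of windows of total length divisible by $s$, and set $\B_1=S\con(S\pm\ell)\con\cdots$, translating by the appropriate multiple of $\ell$ until all of $\Phi$ is covered; $\B_2$ is obtained by replacing every occurrence of $P$ by $P'$. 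As in Lemma \ref{s/2} this is organised into the subcases $\ell\equiv 1\pmod 4$ and $\ell\equiv 3\pmod 4$, and $t$ even versus $t$ odd, the last window when $t$ is odd being filled by a truncated sequence $Y$. A final count gives $|\B_1|=|\B_2|=\frac{m}{2\lambda_1}$, $\mu(\B_1)=\mu(\B_2)=2$ and $\supp(\B_1)=\supp(\B_2)=\Phi$, while the block-by-block identity $\E(B_i)=\E(B_i')$ certifies that $(\B_1,\B_2)$ is a nice pair.

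I expect the main obstacle to be the misalignment between the block width $s$ and the window length $\ell-1$: in general $s\nmid\ell-1$, so blocks must straddle one or more forbidden multiples of $\ell$. A four-element support has a balanced (zero) signed row sum only when the deleted value sits centrally, so the shifts must be chosen so that every straddled multiple lands in a balanceable position—this is precisely what the $\ell\bmod 4$ and $t$-parity case analysis has to secure, and it is where the $2\times 6$ core and its gapped variants do the real work. The remaining difficulty is purely the bookkeeping: checking that the translated copies of $S$ (and the tail $Y$) cover $\Phi$ exactly with multiplicity $2$, and that $\B_1$ and $\B_2$ agree entry-by-entry on each block.
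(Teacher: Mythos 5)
Your toolkit is essentially the one the paper uses: your $2\times 4$ block $A$ and its centrally-gapped variant are the paper's $U_5$ and $U_3$, your cores $P,P'$ (matching entry lists, one satisfying \eqref{blocchi}, the other \eqref{blocchiOLD}) play the role of the paper's $V_r$ blocks and of the pair $(Z,Z')$, and your parity argument for why the $2\times 6$ core must carry nonzero column sums is correct. But your assembly is genuinely different from the paper's, and it is where the proposal breaks. You insist that each $2\times s$ block have support equal to a run of $s$ consecutive elements of $\Phi$, laid end to end. The paper instead uses a two-zone assembly: the $2\times 4$ blocks \emph{by themselves} tile an initial interval $[1,N]$ with $N=\frac{2mq}{\lambda_1}+\eta$, the $2\times 6$ blocks by themselves tile the remaining tail of $\Phi$, and each $2\times s$ block is formed by juxtaposing $q$ blocks of the first zone with one block of the second, so that the support of a single block is scattered rather than contiguous. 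This decoupling is not cosmetic: it is what allows the two gap mechanisms (central gaps in $4$-blocks, odd-position gaps in $6$-blocks) to be analyzed independently, via the $\ell\bmod 4$ dichotomy in zone one and the recursion $\ell-7+r_{j-1}=6h_j+r_j$ with the parity argument that every offset $r_j$ is odd in zone two.

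The concrete failure of the contiguous layout is the case $\ell=3$, i.e.\ $t=\frac{ms}{2\lambda_1}$, which is allowed by the hypotheses. Then the admissible windows of $\Phi$ have length $2$, so a block whose support consists of $s$ consecutive elements of $\Phi$ spans about $\frac{3s}{2}$ integers and must straddle roughly $\frac{s}{2}$ forbidden multiples of $\ell$; your ingredients supply at most one internal gap per $2\times 4$ subblock and one per core, i.e.\ at most $\frac{s-2}{4}$ gaps, which is already insufficient for $s=6$ (the required support is $\{1,2,4,5,7,8\}$, with \emph{two} internal gaps in the core's span, while your gapped $P$ has only one). The paper needs the dedicated two-gap block $Z$ (support $\{1,2,4,5,7,8\}$) and its \eqref{blocchiOLD}-companion $Z'$ precisely here, plus a separate device $(V_5,V_3\pm 7)$ for $\ell=5$. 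Relatedly, your final claim that the remaining difficulty is ``purely the bookkeeping'' understates the problem: showing that every straddled multiple can be placed in a balanceable position, simultaneously for all blocks and with all blocks sharing the \emph{same} column-sum signature as \eqref{blocchi} and \eqref{blocchiOLD} demand, is the actual content of the lemma, and nothing in the proposal proves it. To complete your route you would have to add multi-gap cores for small $\ell$ and carry out a positional/parity analysis of your own; alternatively, switch to the paper's scattered-support assembly, where this analysis is already done.
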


\begin{proof}
Write $s=4q+6$ where $q\geq 0$ and take the following shiftable blocks:

\begin{footnotesize}
$$\begin{array}{rclcrcl}
 U_3& =& \begin{array}{|c|c|c|c|}\hline
  1 & -2 & -4 &  5 \\\hline
 -1 &  2 &  4 & -5  \\ \hline
      \end{array}\;, & \quad &
U_5 & =& \begin{array}{|c|c|c|c|}\hline
  1 &  -2 & -3 &  4  \\\hline
 -1 &   2 &  3 & -4  \\\hline
\end{array}\;,\\[8pt]
 V_1& =& \begin{array}{|c|c|c|c|c|c|}\hline
  2 & -2 & -5 &  -6 &  4 &  7 \\\hline
 -3 &  3 &  6 &   5 & -4 & -7 \\ \hline
      \end{array}\;, & \quad &
  V_3& =& \begin{array}{|c|c|c|c|c|c|}\hline
  1 & -1 & -5 &  -6 &  4 &  7 \\\hline
 -2 &  2 &  6 &   5 & -4 & -7 \\ \hline
      \end{array}\;, \\[8pt]
 V_5& =& \begin{array}{|c|c|c|c|c|c|}\hline
  6 & -6 & -2 &  -3 &  1 &  4 \\\hline
 -7 &  7 &  3 &   2 & -1 & -4 \\ \hline
      \end{array}\;, & \quad &
  V_7& =& \begin{array}{|c|c|c|c|c|c|}\hline
  1 & -1 & -4 &  -5 &  3 &  6 \\\hline
 -2 &  2 &  5 &   4 & -3 & -6 \\ \hline
      \end{array}\;, \\[8pt]
 Z & =& \begin{array}{|c|c|c|c|c|c|}\hline
  1 & -1 &  4 &  -5 &  -7  &  8 \\\hline
 -2 &  2 & -4 &   5 &   7  & -8 \\ \hline
      \end{array}\;, & \quad &
  Z' & =& \begin{array}{|c|c|c|c|c|c|}\hline
  1 &  4  &  -1 &  -5 &  -7  &  8 \\\hline
 -2 &  -4 &   2 &   5 &   7  & -8 \\ \hline
      \end{array}\;.
\end{array}$$
\end{footnotesize}

Note that, since $t$ divides $\frac{ms}{2\lambda_1}$, $\ell$ is an odd integer.

If $\ell=4x+1\geq 5$, take
$\tilde S=(U_5, U_5\pm 4, U_5\pm 8,\ldots,U_5\pm 4(x-1) )$.
Then $|\tilde S|=x$, $\mu(\tilde S)=2$ and $\supp(\tilde S)=[1,\ell]\setminus\{\ell\}$.
Let $\tilde \B$ be the sequence obtained by taking the first $\frac{mq}{2\lambda_1}$ blocks in
$\con\limits_{c\geq 0} (\tilde S\pm\ell c)$.
If $\ell=4x+3\geq 3$, take
$\tilde S=(U_5, U_5\pm 4, U_5\pm 8,\ldots,U_5\pm 4(x-1),U_3\pm 4x, U_5\pm (4x+5), U_5\pm (4x+9),\ldots,
U_5\pm (8x+1))$.
Then $|\tilde S|=2x+1$, $\mu(\tilde S)=2$ and $\supp(\tilde S)=[1,2\ell]\setminus\{\ell,2\ell\}$.
Let $\tilde \B$ be the sequence obtained taking the first $\frac{mq}{2\lambda_1}$ blocks in
$\con\limits_{c\geq 0} (\tilde S\pm 2\ell c)$.
In both cases we obtain a sequence $\tilde \B$
of blocks of size $2\times 4$ that satisfy both \eqref{blocchi} and \eqref{blocchiOLD}
and such that $\supp(\tilde \B)=[1,N]$ where
$N=\frac{2mq}{\lambda_1}+\eta$ with $\eta=\left\lfloor \frac{2qt}{s}\right\rfloor$.

Now, we have to construct a sequence $S'$ of shiftable blocks of size $2\times 6$ satisfying condition \eqref{blocchi}
in such a way that $|S'|=\frac{m}{2\lambda_1}$ and
$$\supp(S')=\left[N+1,\frac{ms}{2\lambda_1}+\left\lfloor \frac{t}{2}\right\rfloor\right]
\setminus \left\{j\ell: j \in \left[\eta+1,\left\lfloor \frac{t}{2} \right\rfloor \right]\right\}.$$

If $\ell=3$, then $t=\frac{ms}{2\lambda_1}$ and $N=3\frac{mq}{\lambda_1}\equiv 0 \pmod 3$.
We can take
$S'=\con\limits_{c=0}^{\frac{m}{2\lambda_1}-1}(Z\pm (N+9c))$.
If $\ell=5$,  then $t=\frac{ms}{4\lambda_1}$ and $N=5\frac{mq}{2\lambda_1}\equiv 0 \pmod 5$.
Define $T=( V_5, V_3\pm 7)$.
If $\frac{m}{2\lambda_1}$ is even, we can take
$S'=\con\limits_{c=0}^{\frac{m}{4\lambda_1}-1}(T\pm (N+15c) )$.
If $\frac{m}{2\lambda_1}$ is odd, we can take
$S'=\left(\con\limits_{c=0}^{\frac{m-6\lambda_1}{4\lambda_1}}( T\pm (N+15c)  )\right)\con
\left(V_5\pm \left(\frac{ms}{2\lambda_1}+\frac{t-15}{2}\right)\right)$.

Suppose now that $\ell\geq 7$: in this case, any set of $6$ consecutive integers contains at
most one multiple of $\ell$. We start considering the interval $[N+1,N+6]$
and the first multiple of $\ell$ belonging to the interval $[N+1,\frac{ms}{2\lambda_1}+\lfloor t/2\rfloor]$.
So, if $(\eta+1)\ell$ is an element of $[N+1,N+6]$ we take the block $V_{r}$ where $r$ must be chosen in such a way that
$\supp(V_r\pm N)$ does not contain $(\eta+1)\ell$.
Otherwise, we take the block $V_{7}$ and repeat this process considering the interval $[N+7,N+12]$.

It will be useful to  define,  for all $b\geq 1$,  the sequence
$$H(b)=(V_{7},V_{7}\pm 6, V_{7} \pm 12,\ldots, V_{7}\pm 6(b-1)).$$
Also, we set $H(0)$ to be the empty sequence: so, for all $b\geq 0$ the sequence $H(b)$ contains $b$ elements and
$\supp(H(b))=[1,6b]$.

Write $(\eta+1)\ell-N=6h_0+r_0$, where $0\leq r_0 < 6$, and define the sequence
$$S'_0=(H(h_0), V_{r_0}\pm 6h_0 ).$$
Note that $r_0$ is odd, since $\ell$ is odd and $(\eta+1)\ell-N \equiv (\eta+1)\ell+\eta \equiv 1\pmod 2$.
Furthermore, $\supp(S_0'\pm N)=[N+1, N+6h_0+7]\setminus \{ (\eta+1)\ell \}$.

Now, for all $j\in [1,\lfloor t/2 \rfloor - \eta]$, write
$\ell-7+r_{j-1}=6h_j +r_j$, where $0\leq  r_j < 6$, and define the sequence
$$S_j'=\left(H(h_j)\pm\left(7j+6\sum_{i=0}^{j-1} h_i   \right),
V_{r_j}\pm  \left(7j+6\sum_{i=0}^j h_i  \right)\right).$$
Note that $(\eta+j+1)\ell-N=6\sum_{i=0}^{j} h_i +7j+ r_j $ and
$$
\supp(S_j'\pm N) = \left[N+1+7j+6\sum_{i=0}^{j-1} h_i,\;  N+7(j+1)+6\sum_{i=0}^j h_i  \right]\setminus \{(\eta+j+1)\ell\}.
$$
The elements of $S'$ are the first $\frac{m}{2\lambda_1}$ blocks in
$\con\limits_{c=0}^{\lfloor t/2 \rfloor - \eta} (S'_c\pm N)$.

Finally, writing $\tilde \B=\left(A_1,\ldots,A_{\frac{mq}{2\lambda_1}}\right)$ and $S'=\left(G_1,\ldots,G_{\frac{m}{2\lambda_1}} \right)$,
for all $i=1,\ldots,\frac{m}{2\lambda_1}$, let $B_i$ be the block of size $2\times s$ obtained by juxtaposing the $q$ blocks
$$A_{(i-1)q+1},\;A_{(i-1)q+2},\;A_{(i-1)q+3},\; \ldots,A_{iq}$$
and the  block $G_i$.
By construction, the sequence $\B_1=(B_1,\ldots,B_{\frac{m}{2\lambda_1}})$ satisfies condition \eqref{blocchi},
has cardinality $\frac{m}{2\lambda_1}$, $\mu(\B_1)=2$ and
$\supp(\B_1)=\supp(S)\cup \supp(S')=\Phi$.

The sequence $\B_2$ is obtained by $\B_1$ replacing the block $Z$ with $Z'$ (case $\ell=3$).
\end{proof}

\begin{lem}\label{8p}
Let $\lambda=\lambda_1\lambda_2$ be as in \eqref{lam} with $\lambda_2=2$.
Let $p$ be an odd prime dividing $s$ and suppose
that $t$ is a divisor of $\frac{ms}{\lambda_1}$ such that $t\equiv 0 \pmod {4p}$.
There exists a nice pair $(\B,\B)$, where $\B$ is a sequence of length $\frac{m}{2\lambda_1}$ consisting of
blocks of size $2\times s$ such that $\mu(\B)=2$ and $\supp(\B)=\Phi$.
\end{lem}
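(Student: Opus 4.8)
The plan is to cover the regime complementary to Lemma~\ref{2odd}, where we assumed $t\mid\frac{ms}{2\lambda_1}$ and consequently $\ell$ was odd. Here the hypotheses $t\mid\frac{ms}{\lambda_1}$ and $4p\mid t$ are tailored so that a fundamental window spanning an \emph{even} number of $\ell$-intervals tiles the target support; this works for either parity of $\ell$ and is exactly what the factor $p\mid s$ will make possible. Since $\frac{t}{2}\ell=\frac{ms}{\lambda}+\frac{t}{2}$, the support to be realised is
$$\Phi=\left[1,\tfrac{t}{2}\ell\right]\setminus\left\{\ell,2\ell,\ldots,\tfrac{t}{2}\ell\right\},$$
and as $\mu(\B)=\lambda_2=2$ the supports of the $\frac{m}{2\lambda_1}$ blocks of $\B$ must partition $\Phi$ into $s$-element pieces, each realised by a shiftable $2\times s$ block. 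Because we output the pair $(\B,\B)$, every block must satisfy \emph{both} \eqref{blocchi} and \eqref{blocchiOLD}; by the discussion following the definition of nice pair this is equivalent to requiring, on top of $\tau_1=\tau_2=0$ and $\gamma_{2i-1}=-\gamma_{2i}=\sigma_i$, that $\sum_i\sigma_i=0$.

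First I would exploit $p\mid s$. As $s\equiv 2\pmod 4$ and $p$ is odd, $p$ divides $\frac{s}{2}$, so each width-$s$ block decomposes into $\frac{s}{2p}$ juxtaposed sub-blocks of size $2\times 2p$. I would produce a single seed sub-block $H$ of size $2\times 2p$ with
$$\supp(H)=\{1,\ \ell+1,\ 2\ell+1,\ \ldots,\ (2p-1)\ell+1\},$$
its $2p$ absolute values each occurring once positively and once negatively, and the $\pm$ signs arranged so that $H$ already satisfies both \eqref{blocchi} and \eqref{blocchiOLD} at its own scale (in particular $\mu(H)=2$ and its $p$ column-pair sums cancel). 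When $\ell$ is even the entries will involve $\ell$ explicitly, precisely as in the $\pm(\ell+1)$ blocks $F,G$ (resp.\ $H,L$) used in the even-$\ell$ cases of Lemmas~\ref{magg 6} and~\ref{s/2}.

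Next I would form the fundamental sequence
$$S=\bigl(H,\ H\pm 1,\ H\pm 2,\ \ldots,\ H\pm(\ell-2)\bigr)$$
of length $\ell-1$, whose supports are disjoint and union to $[1,2p\ell]\setminus\{\ell,2\ell,\ldots,2p\ell\}$, and then sweep $\Phi$ by concatenating shifted copies,
$$\con\limits_{c=0}^{\frac{t}{4p}-1}\bigl(S\pm 2p\ell\,c\bigr).$$
The hypothesis $4p\mid t$ guarantees that $\frac{t}{4p}$ is an integer and that these windows tile $[1,\frac{t}{2}\ell]$ exactly, with the removed $\ell$-multiples landing on the prescribed gaps. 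This yields $\frac{t}{4p}(\ell-1)=\frac{ms}{4p\lambda_1}$ sub-blocks; grouping consecutive sub-blocks in batches of $\frac{s}{2p}$ produces $\frac{m}{2\lambda_1}$ blocks of size $2\times s$. Since juxtaposition concatenates rows and preserves the paired column sums, each block inherits \eqref{blocchi} and \eqref{blocchiOLD} (the per-sub-block relation $\sum_i\sigma_i=0$ adds up to $0$), and one checks directly $\mu(\B)=2$ and $\supp(\B)=\Phi$, so $(\B,\B)$ is the required nice pair.

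The main obstacle is the explicit design of the seed $H$: its $2p$ entries must simultaneously have support $\{j\ell+1\}$, realise $\mu=2$, give opposite paired column sums, and satisfy $\sum_i\sigma_i=0$, all with row sums zero, and this must be done uniformly for both parities of $\ell$. Verifying that such an $H$ exists for every admissible $\ell$ and every odd prime $p\mid s$, and that the internal shifts in $S$ together with the window shifts $2p\ell\,c$ keep all supports disjoint and gap-aligned, is where the arithmetic of $p\mid s$ and $4p\mid t$ is indispensable and forms the technical heart of the argument.
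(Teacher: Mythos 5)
Your outline reproduces the paper's strategy essentially step for step: a seed block of size $2\times 2p$ with support $\{j\ell+1 : j\in[0,2p-1]\}$ satisfying both \eqref{blocchi} and \eqref{blocchiOLD}; internal shifts by $0,1,\ldots,\ell-2$ sweeping one window $[1,2p\ell]\setminus\{\ell,2\ell,\ldots,2p\ell\}$; window shifts by multiples of $2p\ell$ over $\frac{t}{4p}$ windows (this is where $4p\mid t$ enters); and grouping of consecutive sub-blocks in batches of $\frac{s}{2p}$ to form the $2\times s$ blocks. Your counting ($\frac{t}{4p}(\ell-1)=\frac{ms}{4p\lambda_1}$ sub-blocks, hence $\frac{m}{2\lambda_1}$ blocks, support exactly $\Phi$) and your observation that, for a block already satisfying \eqref{blocchi}, condition \eqref{blocchiOLD} amounts to $\sum_i\sigma_i=0$, are both correct. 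However, the one step you explicitly defer --- producing the seed block $H$ --- is precisely the content of the lemma, and your proposal never exhibits it; you yourself call it ``the technical heart of the argument.'' As written, this is a genuine gap: nothing in the proposal guarantees that a $2\times 2p$ shiftable block with the required support, $\mu=2$, zero row sums, paired column sums, and vanishing $\sum_i\sigma_i$ exists for every odd prime $p$ and every admissible $\ell$.

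The paper fills this gap with two explicit blocks. The block $W_4$ has first row $1,\,-(\ell+1),\,-(2\ell+1),\,3\ell+1$ and second row its negation, so all four column sums are $0$ and both conditions hold trivially; the $2\times 6$ block $W_6$ has support $\{j\ell+1 : j\in[0,5]\}$ and column sums $(-\ell,\ell,\ell,-\ell,0,0)$, hence satisfies \eqref{blocchi} with $(\sigma_1,\sigma_2,\sigma_3)=(-\ell,\ell,0)$ and, since these sum to zero, also \eqref{blocchiOLD}. The seed is then $V$ obtained by juxtaposing $W_6$ with $W_4\pm 6\ell,\ W_4\pm 10\ell,\ \ldots,\ W_4\pm(2p-4)\ell$, which has width $6+4\cdot\frac{p-3}{2}=2p$ and support $\{j\ell+1: j\in[0,2p-1]\}$. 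This is exactly where the oddness of $p$ is used: since $2p\equiv 2\pmod 4$, one cannot tile the $2p$ columns by width-$4$ blocks alone, and a single width-$6$ block is needed, just as in the other lemmas of this section. Note also that the entries of $W_4$ and $W_6$ are written in terms of $\ell$ and work uniformly, so the case distinction on the parity of $\ell$ that you anticipate never arises; once the seed is written down, the rest of your argument goes through verbatim.
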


\begin{proof}
Take the following blocks:

\begin{footnotesize}
$$\begin{array}{rcl}
W_4 & =& \begin{array}{|c|c|c|c|}\hline
       1 &   -(\ell+1) &  -(2\ell+1) &    3\ell+1\\\hline
      -1 &     \ell+1  &    2\ell+1   & -(3\ell+1)\\\hline
\end{array}\;,\\[8pt]
W_6 & = &
\begin{array}{|c|c|c|c|c|c|}\hline
         1 &         -1 &   -(3\ell+1) &      -(4\ell+1) &    2\ell+1  &      5\ell+1 \\\hline
 -(\ell+1) &     \ell+1 &     4\ell+1  &        3\ell+1  &  -(2\ell+1) &    -(5\ell+1) \\\hline
\end{array}\;.
\end{array}$$
\end{footnotesize}

\noindent Then $W_4$ and $W_6$ satisfy both properties \eqref{blocchi} and \eqref{blocchiOLD}
with column sums  $(0,0,0,0)$ and
 $(-\ell, \ell, \ell, -\ell,0,0)$, respectively.
 Furthermore, $\mu(W_4)=\mu(W_6)=2$ and
 $$\supp(W_4)=\{j\ell+1: j \in [0,3] \} \equad \supp(W_6)=\{j\ell+1: j \in [0,5]\}.$$
 Let $V$ be the following $2\times 2p$ block:
 $$V= \begin{array}{|c|c|c|c|c|}\hline
 W_6  & W_4\pm 6\ell & W_4\pm 10\ell & \cdots & W_4\pm (2p-4)\ell \\\hline
 \end{array}\;.$$
 Clearly, also $V$ satisfies both \eqref{blocchi} and \eqref{blocchiOLD} and its support is
 $\supp(V)=\{j\ell+1: j \in [0,2p-1] \}$.
We can use this block $V$ for constructing our sequence $\B$:
the $2\times s$ blocks of $\B$ are obtained simply by juxtaposing $h=\frac{s}{2p}$
blocks of type $V\pm x$, for $x\in X \subset \N$, following the natural order of $(X,\leq)$.
 So, we are left to exhibit  a suitable set $X$ of size $\frac{mh}{2\lambda_1}$
such that the support of the corresponding sequence $\B$ is $\Phi$.

 Let first $X_0=[0,\ell-2]$. Then $\supp(V\pm x_{i_1})\cap \supp(V\pm x_{i_2})=\emptyset$ for each $x_{i_1},x_{i_2}\in
X_0$ such that $x_{i_1}\neq x_{i_2}$. Furthermore,
 $$\bigcup_{x \in X_0} \supp(V\pm x)=[1,2p\ell]\setminus \{j\ell: j \in [1,2p] \}.$$
 Similarly, for any $i\in \N$, if  $X_i=[2pi\ell, (2pi+1)\ell-2]$ then
 $$\bigcup_{x \in X_i} \supp(V\pm x)=[1+2pi\ell, 2p\ell+2pi\ell]\setminus \{j\ell: j \in [1+2pi,2p+2pi] \}.$$
 Clearly, $X_{i_1}\cap X_{i_2}=\emptyset$ if $i_1\neq i_2$.
 Therefore, take
 $X=\bigcup\limits_{i=0}^{\frac{t}{4p}-1} X_i$: this is a set of size $\frac{t}{4p}\cdot (\ell-1)=\frac{t}{4p}\cdot
 \frac{4mph}{2\lambda_1 t}=\frac{mh}{2\lambda_1}$.
 It follows that the sequence $\B$ obtained, as previously described, from the blocks $V\pm x$, with $x\in X$, has
support equal to
 $$\begin{array}{rcl}
\supp(\B) & =&  \bigcup\limits_{i=0}^{\frac{t}{4p}-1}([1+2pi\ell,2p\ell+2pi\ell]\setminus \{j\ell: j \in [1+2pi,2p+2pi]
\})\\[8pt]
 & =& \left [1,\frac{t}{2}\ell \right ]\setminus \left \{j\ell: j \in \left [1,\frac{t}{2}\right ] \right \}
 =\left[1,\frac{ms}{2\lambda_1}+\frac{t}{2}\right]\setminus  \left\{\ell, 2\ell, \ldots, \frac{t}{2}\ell\right\},
 \end{array}$$
 as required.
\end{proof}

\begin{ex}\label{18x10}
Taking in the previous lemma, $m=18$, $s=10$, $\lambda_1=3$ and $t=20$,
we can choose $p=5$ so that $t\equiv 0\pmod{20}$. Hence $\ell=4$ and $\B$ consists of the following
three shiftable blocks:

\begin{footnotesize}
 $$\begin{array}{rcl}
B_1 & =& \begin{array}{|c|c|c|c|c|c|c|c|c|c|}\hline
  1 & -1 & -13 & -17 &  9 &  21 &  25 & -29 & -33 &  37 \\\hline
 -5 &  5 &  17 &  13 & -9 & -21 & -25 &  29 &  33 & -37 \\\hline
\end{array}\;,\\[8pt]
\end{array}$$
 $$\begin{array}{rcl}
B_2 & = &
\begin{array}{|c|c|c|c|c|c|c|c|c|c|}\hline
  2 & -2 & -14 & -18 &  10 &  22 &  26 & -30 & -34 &  38 \\ \hline
 -6 &  6 &  18 &  14 & -10 & -22 & -26 &  30 &  34 & -38 \\ \hline
\end{array}\;, \\[8pt]
B_3 & = &
\begin{array}{|c|c|c|c|c|c|c|c|c|c|}\hline
  3 & -3 & -15 & -19 &  11 &  23 &  27 & -31 & -35 &  39 \\ \hline
 -7 &  7 &  19 &  15 & -11 & -23 & -27 &  31 &  35 & -39 \\ \hline
\end{array}\;.
\end{array}$$
\end{footnotesize}
\end{ex}

\begin{lem}\label{non 8p}
Let $\lambda=\lambda_1\lambda_2$ be as in \eqref{lam} with $\lambda_2=2$.
Let $p$ be  an odd prime $p$ dividing $s$ and suppose
that $t$ is a divisor of $\frac{ms}{\lambda_1 p}$ such that $t\equiv 0 \pmod {4}$.
There exists a nice pair $(\B_1,\B_2)$ of sequences of length $\frac{m}{2\lambda_1}$, where
$\B_1$ and $\B_2$ consist of  blocks of size $2\times s$, $\mu(\B_1)=\mu(\B_2)=2$ and
$\supp(\B_1)=\supp(\B_2)=\Phi$.
\end{lem}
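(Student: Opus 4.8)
The plan is to reduce the case of odd $\ell$ to an already-proven lemma and to treat the case of even $\ell$ by a tiling argument modelled on Lemma~\ref{8p}, but dual to it. Writing $g=\frac{ms}{\lambda_1 p t}$ and recalling that $\lambda_2=2$, so $\lambda=2\lambda_1$, we have $\ell=\frac{ms}{\lambda_1 t}+1=pg+1$; in particular $\ell\equiv 1\pmod p$ and $\ell-1=pg$. First I would observe that if $\ell$ is odd, then $g$ is even, whence $t$ divides $\frac{ms}{2\lambda_1 p}$ and a fortiori $\frac{ms}{2\lambda_1}$; the desired nice pair is then furnished directly by Lemma~\ref{2odd}. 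So the substantive case is $\ell$ even, i.e.\ $g$ odd, and it is precisely here that the hypothesis $t\equiv 0\pmod 4$ will be used.

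For even $\ell$ I would set up the following skeleton. The role played in Lemma~\ref{8p} by runs of $2p$ consecutive multiples of $\ell$ (which forced $4p\mid t$) is taken over by runs of $p$ consecutive integers, of which each window $[(j-1)\ell+1,\ j\ell-1]$, of length $\ell-1=pg$, contains exactly $g$. Since $4\mid t$, the $\tfrac t2$ windows split into $\tfrac t4$ consecutive pairs $(\,2j-1,\,2j\,)$, the two members of a pair differing by the shift $\ell$. This yields the disjoint decomposition
\begin{equation*}
\Phi=\bigcup_{x\in X}\Bigl([x+1,\,x+p]\cup[x+\ell+1,\,x+\ell+p]\Bigr),\qquad X=\bigl\{2(j-1)\ell+(i-1)p:\ j\in[1,\tfrac t4],\ i\in[1,g]\bigr\},
\end{equation*}
with $|X|=\tfrac t4\,g=\tfrac{ms}{4\lambda_1 p}$. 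It therefore suffices to exhibit a single \emph{base block} $B$ of size $2\times 2p$ with $\mu(B)=2$, $\tau_1(B)=\tau_2(B)=0$ and $\supp(B)=[1,p]\cup[\ell+1,\ell+p]$, meeting condition \eqref{blocchi}: then $\{B\pm x:\ x\in X\}$ are the required bricks, and juxtaposing $h=\tfrac{s}{2p}$ of them at a time produces $2\times s$ blocks; the number of these is $\tfrac{|X|}{h}=\tfrac{m}{2\lambda_1}$ and their total support is $\Phi$, as wanted.

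The heart of the matter is the explicit construction of $B$. The obstruction is that its support consists of two runs of the \emph{odd} length $p$ separated by the large gap $\ell$, so one cannot balance the two rows to zero by the naive scheme of placing $+v$ and $-v$ in opposite rows for each value $v$ (the corresponding half-sum has the wrong size). Instead I would assemble $B$ from a short list of explicit shiftable $2\times 2$ and $2\times 6$ pieces, in the spirit of the blocks $A,E,F,G$ and $V_r$ used earlier in this section, deliberately placing both signed copies of a few \emph{awkward} values in the same row to absorb the imbalance, while keeping $\mu(B)=2$. I would design the column sums of $B$ to come in opposite pairs with vanishing odd-index sum, so that $B$ satisfies both \eqref{blocchi} and \eqref{blocchiOLD} at once and the nice pair may be taken to be $(\B,\B)$; should a single residual piece satisfy only \eqref{blocchi}, I would define $\B_2$ from $\B_1$ by swapping that pivot for a column-rearranged twin with the same entry list, exactly as with $E/E'$, $G/G'$ and $Z/Z'$ above.

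The main obstacle is thus this finitely-described but delicate block design: producing, uniformly in the odd prime $p\mid s$ and for even $\ell$, a shiftable $2\times 2p$ array with $\mu=2$, zero row sums, support $[1,p]\cup[\ell+1,\ell+p]$, and column sums fitting the nice-pair templates. Once such a $B$ is in hand, the identity $\bigcup_{x\in X}\supp(B\pm x)=\Phi$ and the length count $\tfrac{m}{2\lambda_1}$ are routine disjoint-union and arithmetic checks of the same kind carried out in Lemmas~\ref{2odd} and~\ref{8p}.
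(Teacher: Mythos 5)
Your reduction of the odd-$\ell$ case to Lemma \ref{2odd} is correct (if $\ell=pg+1$ is odd then $g$ is even, since $p$ is odd, so $t$ divides $\frac{ms}{2\lambda_1 p}$ and a fortiori $\frac{ms}{2\lambda_1}$), and your decomposition of $\Phi$ for even $\ell$ into disjoint shifted copies of $[1,p]\cup[\ell+1,\ell+p]$, together with the counts $|X|=\frac{ms}{4\lambda_1 p}$ and $|X|/h=\frac{m}{2\lambda_1}$, is also correct. But the proposal has a genuine gap, and it sits exactly where you place ``the main obstacle'': the $2\times 2p$ brick $B$ with $\mu(B)=2$, zero row sums, support $[1,p]\cup[\ell+1,\ell+p]$, and column sums fitting \eqref{blocchi} and \eqref{blocchiOLD} is never constructed. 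You rightly observe that the naive signing scheme fails (with $p$ odd and $\ell$ large the required half-sum cannot be achieved), and you state that you ``would assemble'' $B$ from small explicit pieces, but no such pieces are exhibited. Since everything else in the argument is, as you say yourself, routine, this missing block is the entire mathematical content of the lemma; deferring it leaves the proof incomplete.

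For comparison, the paper's proof consists precisely of such explicit bricks, and it needs no parity split on $\ell$: writing $\ell=py+1$, it uses a $2\times 4$ block $W_4$ whose first row is $\bigl(y+1,\,-(2y+1),\,-(\ell+y+1),\,\ell+2y+1\bigr)$ and whose second row is its negative (zero row sums because $a-b-(\ell+a)+(\ell+b)=0$), together with a $2\times 6$ block $W_6$ (and a column-permuted twin $W_6'$ for \eqref{blocchiOLD}) that absorbs the leftover triple $\{1,\,y+1,\,2y+1\}$ via the arithmetic-progression identity $1+(2y+1)=2(y+1)$; juxtaposing these gives a $2\times 2p$ block $V$ whose support is $\{iy+1,\ \ell+iy+1 : i\in[0,p-1]\}$, an arithmetic progression of difference $y$ rather than two runs, which is then swept over $\Phi$ by the shifts $x\in\bigcup_{i=0}^{t/4-1}[2i\ell,\,2i\ell+y-1]$. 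Your intended brick is the dual case (runs of consecutive integers, shifts in multiples of $p$), and it could in fact be built by specializing the same identities: take columns $\bigl(a,\,-(a+1),\,-(\ell+a),\,\ell+a+1\bigr)$ over their negatives for the consecutive pairs $\{4,5\},\ldots,\{p-1,p\}$, and for $\{1,2,3\}$ use $1+3=2\cdot 2$, e.g.\ a $2\times 6$ block with first row $\bigl(3,-3,1,-2,-(\ell+2),\ell+3\bigr)$ and second row $\bigl(-(\ell+1),\ell+1,-1,2,\ell+2,-(\ell+3)\bigr)$, whose column sums $(2-\ell,\ \ell-2,\ 0,0,0,0)$ fit \eqref{blocchi}. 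So your skeleton is salvageable and genuinely different in its decomposition of $\Phi$, but as submitted the crucial construction is absent.
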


\begin{proof}
By hypothesis we can write $\ell=p y +1$.
Consider the following blocks:

\begin{footnotesize}
 $$\begin{array}{rcl}
W_4 & =& \begin{array}{|c|c|c|c|}\hline
    y+1  &  -(2y+1) &  -((p+1)y+2) &   (p+2)y+2\\\hline
  -(y+1) &    2y+1  &    (p+1)y+2 &  -((p+2)y+2) \\\hline
\end{array}\;,\\[8pt]
W_6 & = &
\begin{array}{|c|c|c|c|c|c|}\hline
   2y+1  &  -(2y+1) &  1  & -(y+1) & -((p+1)y+2) &   (p+2)y+2 \\\hline
 -(py+2) &    py+2  & -1  &   y+1  &   (p+1)y+2 & -((p+2)y+2)\\\hline
\end{array}\;, \\[8pt]
W_6' & = &
\begin{array}{|c|c|c|c|c|c|}\hline
   2y+1  &   1 &  -(2y+1) & -(y+1) & -((p+1)y+2) &   (p+2)y+2 \\\hline
 -(py+2) &  -1 &  py+2    &   y+1  &   (p+1)y+2 & -((p+2)y+2)\\\hline
\end{array}\;.
\end{array}$$
\end{footnotesize}

Note that the block $W_4$ satisfies both conditions \eqref{blocchi} and \eqref{blocchiOLD},
while $W_6$ satisfies condition \eqref{blocchi} and  $W_6'$ satisfies condition \eqref{blocchiOLD}.
Furthermore,
 $$\begin{array}{rcl}
\supp(W_4) & =& \{ (jp+1)y+j+1  , (jp+2)y+j+1: j \in [0,1] \},\\
\supp(W_6)=\supp(W_6') & =& \{ jpy+j+1, (jp+1)y+j+1, (jp+2)y+j+1: j \in [0,1]\}.
   \end{array}$$
Let $V$ be the following $2\times 2p$ block:
 $$V=
 \begin{array}{|c|c|c|c|c|}\hline
 W_6  & W_4\pm 2y & W_4\pm 4y & \cdots & W_4\pm (p-3)y \\\hline
 \end{array}\;.
 $$
 Clearly, $V$ satisfies \eqref{blocchi} and its support is
 $$\begin{array}{rcl}
\supp(V) & =&  \{iy+1,(p+i)y+2  :  i \in [0,p-1]\}\\
&= & \{iy+1,\ell+(iy+1) : i\in[0,p-1]\}.
   \end{array}$$
 We can use this block $V$ for constructing the sequence $\B_1$ as done in Lemma \ref{8p}:
 it suffices to exhibit  a suitable set $X$ of size
$\frac{mh}{2\lambda_1}$,  where $h=\frac{s}{2p}$,   such that the support of the corresponding sequence  $\B_1$ is $\Phi$.

 Let first $X_0=[0,y-1]$. Then $\supp(V\pm x_{i_1})\cap \supp(V\pm x_{i_2})=\emptyset$ for each $x_{i_1},x_{i_2}\in
X_0$ such that $x_{i_1}\neq x_{i_2}$. Furthermore,
 $$\bigcup_{x \in X_0} \supp(V\pm x)= [1,py]\cup [\ell+1,\ell+py] =[1,2\ell]\setminus \{\ell, 2\ell\}.$$
 Similarly, for any $i\in \N$, if  $X_i=[2i\ell , 2i\ell +y -1]$ then
 $$\bigcup_{x \in X_i} \supp(V\pm x)=[1+2i\ell, (2i+2)\ell ]\setminus \{(2i+1)\ell, (2i+2)\ell\}.$$
 Clearly, $X_{i_1}\cap X_{i_2}=\emptyset$ if $i_1\neq i_2$.
 Therefore, take
 $X=\bigcup\limits_{i=0}^{\frac{t}{4}-1} X_i$: this is a set of size $\frac{t}{4}\cdot y=\frac{t}{4}\cdot \frac{\ell  -1}{p}=
 \frac{t}{4} \cdot \frac{2mh}{\lambda_1 t}=\frac{mh}{2\lambda_1}$.
 It follows that the sequence $\B_1$ obtained  from the blocks $V\pm x$, with $x\in X$, has
support equal to
 $$\begin{array}{rcl}
\supp(\B_1) & =&  \bigcup\limits_{i=0}^{\frac{t}{4}-1}([1+2i\ell,2\ell(i+1)]\setminus \{(2i+1)\ell, (2i+2)\ell\})\\[8pt]
 & =& \left [1,\frac{t}{2}\ell \right ]\setminus \left \{\ell, 2\ell, \ldots, \frac{t}{2}\ell \right \}
 =\Phi,
 \end{array}$$
 as required. The sequence $\B_2$ is obtained using $W_6'$ instead of $W_6$.
\end{proof}

The last case we need is when $\lambda_2\equiv  0 \pmod 4$.

\begin{lem}\label{s 0}
Let $\lambda=\lambda_1\lambda_2$ be as in \eqref{lam} with  $\lambda_2\equiv 0 \pmod 4$.
There exists a nice pair $(\B,\B)$, where $\B$ is a sequence of length $\frac{m}{2\lambda_1}$
consisting of blocks of size $2\times s$ such that $\mu(\B)=\lambda_2$ and $\supp(\B)=\Phi$.
\end{lem}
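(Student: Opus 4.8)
The plan is to mimic the proof of Lemma~\ref{magg 6}: first produce a sequence $\tilde\B$ of small shiftable blocks whose supports tile $\Phi$, each with $\mu=\lambda_2$, and then group consecutive blocks of $\tilde\B$ into blocks of size $2\times s$. As in the rest of this subsection we may assume $\lambda$ divides $ms$, so $\lambda=\lambda_1\lambda_2$ as in \eqref{lam}. The point at which the case $\lambda_2\equiv 0\pmod4$ departs from the case $\lambda_2\equiv 2\pmod 4$ is purely arithmetical, and it is the step I expect to be the main obstacle. Writing $s=2s'$ (with $s'$ odd, since $s\equiv 2\pmod4$) and $\lambda_2=4\lambda_2'$, the divisibility $\lambda_2\mid 2s$ forces $\lambda_2'\mid s'$, whence $s/\lambda_2=s'/(2\lambda_2')\notin\Z$. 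Thus I \emph{cannot} take base blocks of size $2\times\lambda_2$ carrying two support values, as was done in Lemma~\ref{magg 6}: such a block could not be juxtaposed an integer number of times to fill a $2\times s$ block.

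The fix, available precisely because $4\mid\lambda_2$, is to use base blocks of width $\lambda_2/2$ carrying a \emph{single} support value. For $x\ge 0$ let $C\pm x$ be the $2\times\frac{\lambda_2}{2}$ block obtained by juxtaposing $\frac{\lambda_2}{4}$ copies of the $2\times 2$ block with first row $(1+x,\,-(1+x))$ and second row $(-(1+x),\,1+x)$. Then $\supp(C\pm x)=\{1+x\}$, $\mu(C\pm x)=\lambda_2$, and every row and every column of $C\pm x$ sums to $0$. Put $S=(C,C\pm 1,\ldots,C\pm(\ell-2))$; this has length $\ell-1$, with $\mu(S)=\lambda_2$ and $\supp(S)=[1,\ell]\setminus\{\ell\}$. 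Because the support is single-valued and the shift is by $1$, this description is uniform in the parity of $\ell$: shifting $S$ by $\ell$ automatically skips the excluded multiples of $\ell$, so no separate construction for $\ell$ even is needed (and one checks that $\ell$ even forces $t$ even, since $\frac{2ms}{\lambda}$ is always even under the present hypotheses).

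I would then assemble $\tilde\B$ exactly as in Lemma~\ref{magg 6}. If $t$ is even, set $\tilde\B=\con\limits_{i=0}^{t/2-1}(S\pm i\ell)$; if $t$ is odd (so $\ell$ is odd), append the half-length block $Y=(C,C\pm1,\ldots,C\pm(\frac{\ell-1}{2}-1))$ shifted by $\frac{t-1}{2}\ell$. In both cases $\supp(\tilde\B)=\Phi$, $\mu(\tilde\B)=\lambda_2$, and $|\tilde\B|=\frac{t}{2}(\ell-1)=\frac{ms}{\lambda}$. Finally I group the blocks of $\tilde\B$ into consecutive runs of length $q'=\frac{2s}{\lambda_2}=\frac{s'}{\lambda_2'}\in\Z$, each run juxtaposing to a block of size $2\times(\frac{\lambda_2}{2}\,q')=2\times s$; this yields a sequence $\B$ of length $\frac{ms}{\lambda}/q'=\frac{m}{2\lambda_1}$ with $\supp(\B)=\Phi$ and $\mu(\B)=\lambda_2$. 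Since every row and column of each constituent $2\times 2$ block sums to $0$, every column sum of each $2\times s$ block vanishes, so the blocks satisfy both \eqref{blocchi} and \eqref{blocchiOLD} with all $\sigma_i$ and all $\sigma'_i$ equal to $0$; taking $\B_1=\B_2=\B$ then gives the required nice pair $(\B,\B)$. The only genuine content is the arithmetic identity $q'\in\Z$ together with the length bookkeeping; the combinatorial core is lighter than in Lemma~\ref{magg 6}, since a single block type suffices and no distinction between $\B_1$ and $\B_2$ arises.
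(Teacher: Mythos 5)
Your proof is correct, and its key building block is exactly the one the paper uses: the $2\times\frac{\lambda_2}{2}$ block made of $\frac{\lambda_2}{4}$ copies of the $2\times 2$ array with rows $(1,-1)$ and $(-1,1)$, shifted to carry a single support value (the paper calls it $Q$, you call it $C$). Where you diverge is in the assembly. You import the full machinery of Lemma \ref{magg 6}: the sequence $S=(C,C\pm 1,\ldots,C\pm(\ell-2))$, concatenation of copies shifted by multiples of $\ell$, a separate half-length tail $Y$ when $t$ is odd, and parity bookkeeping relating $\ell$ and $t$; only afterwards do you group consecutive blocks into $2\times s$ blocks. The paper instead observes that this structure is entirely unnecessary here: since $\supp(Q\pm x)$ is a singleton and every row and column of $Q\pm x$ sums to zero, one may take an \emph{arbitrary} partition of $\Phi$ into $\frac{m}{2\lambda_1}$ subsets $X_i$ of cardinality $\frac{2s}{\lambda_2}$ and juxtapose the blocks $Q\pm(x-1)$, $x\in X_i$, to form $B_i$ directly — no interval tiling, no shifts by $\ell$, no case distinction on the parity of $t$ or $\ell$. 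Both arguments are valid and rest on the same arithmetic (that $4\mid\lambda_2$ makes the single-value block available and $\frac{2s}{\lambda_2}\in\Z$); your route costs extra case analysis but loses nothing, while the paper's exploits the freedom created by singleton supports to make the proof essentially one line. Your preliminary observation that the two-value blocks of Lemma \ref{magg 6} cannot work because $s/\lambda_2\notin\Z$ is a correct and worthwhile motivation that the paper leaves implicit.
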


\begin{proof}
Let $Q$ be the $2\times \frac{\lambda_2}{2}$ block obtained juxtaposing $\frac{\lambda_2}{4}$ copies of the shiftable block
$$\begin{array}{|c|c|}\hline
  1 & -1 \\\hline
 -1 & 1  \\ \hline
      \end{array}\;.$$
Clearly, $Q$ satisfies both conditions \eqref{blocchi} and \eqref{blocchiOLD}.
Furthermore, $\supp(Q)=\{1\}$ and $\mu(Q)=\lambda_2$.
Take a partition of $\Phi$ into $\frac{m}{2\lambda_1}$ subsets $X_i$, each of cardinality   $\frac{2s}{\lambda_2}$.
Writing, for all $i\in \left[1,\frac{m}{2\lambda_1}\right]$,
$X_i=\left\{x_{i,1},x_{i,2},\ldots,x_{i, \frac{2s}{\lambda_2}}\right\}$,
let $B_i$ the block
$$B_i=\begin{array}{|c|c|c|c|c|}\hline
Q \pm (x_{i,1}-1) &  Q\pm (x_{i,2}-1) &  Q\pm (x_{i,3}-1) & \cdots &
Q \pm \left(x_{i,\frac{2s}{\lambda_2}}-1\right) \\[-10pt]
&&&& \\\hline \end{array}.$$
Then each $B_i$ is a block of size $2\times s$ such that  $\supp(B_i)=X_i$
and $\mu(B_i)=\lambda_2$.
Finally, it suffices to take the sequence $\B=\left(B_1,B_2,\ldots,B_{\frac{m}{2\lambda_1}}\right)$.
\end{proof}

\begin{ex}\label{16x10}
Taking in the previous lemma, $m=16$, $s=10$, $\lambda_1=2$, $\lambda_2=4$ and $t=5$,
we have $\ell=9$ and $\Phi=[1,22]\setminus\{9,18\}$. So, can take
$X_1=[1,5]$, $X_2=[6,11]\setminus\{9\}$, $X_3=[12,16]$ and $X_4=[17,22]\setminus\{18\}$.
Hence, the sequence $\B$ consists of the following four shiftable blocks:

\begin{footnotesize}
 $$\begin{array}{rcl}
B_1 & =& \begin{array}{|c|c|c|c|c|c| c|c|c|c|}\hline
 1 & -1 &  2 & -2 &  3 & -3 &  4 & -4 &  5 & -5 \\ \hline
-1 &  1 & -2 &  2 & -3 &  3 & -4 &  4 & -5 &  5 \\ \hline
\end{array}\;,\\[8pt]
B_2 & = &
 \begin{array}{|c|c|c|c|c|c| c|c|c|c|c|c| c|c|c|c|c|c|}\hline
 6 & -6 &  7 & -7 &  8 & -8 &  10 & -10 &  11 & -11 \\ \hline
-6 &  6 & -7 &  7 & -8 &  8 & -10 &  10 & -11 &  11 \\ \hline
\end{array}\;, \\[8pt]
B_3 & = &
 \begin{array}{|c|c|c|c|c|c| c|c|c|c|c|c| c|c|c|c|c|c|}\hline
 12 & -12 &  13 & -13 &  14 & -14 &  15 & -15 &  16 & -16 \\ \hline
-12 &  12 & -13 &  13 & -14 &  14 & -15 &  15 & -16 &  16 \\ \hline
\end{array}\;, \\[8pt]
B_4 & = &
 \begin{array}{|c|c|c|c|c|c| c|c|c|c|c|c| c|c|c|c|c|c|}\hline
 17 & -17 &  19 & -19 &  20 & -20 &  21 & -21 &  22 & -22 \\ \hline
-17 &  17 & -19 &  19 & -20 &  20 & -21 &  21 & -22 &  22 \\ \hline
\end{array}\;.
\end{array}$$
\end{footnotesize}
\end{ex}

\begin{prop}\label{nice}
Suppose that $\lambda$ divides $ms$ and write  $\lambda=\lambda_1\lambda_2$ be as in \eqref{lam}.
There exists a nice pair $(\B_1,\B_2)$ of sequences of length $\frac{m}{2\lambda_1}$,
where $\B_1$ and $\B_2$ consist of blocks of size $2\times s$, $\mu(\B_1)=\mu(\B_2)=\lambda_2$ and
$$\supp(\B_1)=\supp(\B_2)=\left[ 1,\frac{ms}{\lambda}+\left\lfloor \frac{t}{2} \right\rfloor \right]
\setminus \left\{\ell,2\ell,\ldots \left\lfloor \frac{t}{2} \right\rfloor \ell \right\}=\Phi.$$
\end{prop}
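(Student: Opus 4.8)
The plan is to prove the statement by a case analysis on the residue of $\lambda_2$ modulo $4$, invoking in each case the corresponding lemma established above. Since $s\equiv 2\pmod 4$ and $\lambda_2$ divides $2s$, writing $s=2\sigma$ with $\sigma$ odd one sees that $2s=4\sigma$, so every odd divisor of $2s$ actually divides $\sigma=\frac{s}{2}$; this observation controls the odd case. Note that, because $s\geq 6$, the integer $\sigma$ is odd and at least $3$, hence has an odd prime factor, a fact that will be needed later.

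First I would treat $\lambda_2$ odd. If $\lambda_2=\frac{s}{2}$, the required nice pair is produced by Lemma \ref{s/2}; otherwise $\lambda_2\neq\frac{s}{2}$ and Corollary \ref{cor odd} applies directly. Next, for $\lambda_2\equiv 0\pmod 4$ the pair $(\B,\B)$ is furnished by Lemma \ref{s 0}, and for $\lambda_2\equiv 2\pmod 4$ with $\lambda_2\geq 6$ the pair $(\B,\B)$ is furnished by Lemma \ref{magg 6}. In all of these cases the length $\frac{m}{2\lambda_1}$, the block size $2\times s$, the multiplicity $\lambda_2$, and the support condition $\supp(\B_i)=\Phi$ are exactly those recorded in the cited statements, so no additional argument is needed.

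The only case requiring genuine bookkeeping is $\lambda_2=2$, and this is where I expect the main obstacle to lie, since here the applicable lemma depends on the $2$-adic and $p$-adic divisibility of $t$. Recall that $t$ divides $\frac{2ms}{\lambda}=\frac{ms}{\lambda_1}$ and $\ell=\frac{ms}{\lambda_1 t}+1$. If $\ell$ is odd, equivalently $t$ divides $\frac{ms}{2\lambda_1}$, then Lemma \ref{2odd} applies. If $\ell$ is even, I would first show $4\mid t$ as follows: writing $m=2^a m'$ and $\lambda_1=2^b\lambda_1'$ with $m',\lambda_1'$ odd and $b\leq a-1$ (this holds since $\lambda_1$ divides $\frac{m}{2}$), the integer $\frac{ms}{\lambda_1}$ has $2$-adic valuation $a+1-b\geq 2$; as $\frac{ms}{\lambda_1 t}=\ell-1$ must then be odd, $t$ absorbs this entire $2$-power, whence $4\mid t$.

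Having secured $4\mid t$, I would fix an odd prime $p$ dividing $\sigma=\frac{s}{2}$ and split according to whether $p\mid t$. If $p\mid t$, then $4p\mid t$ (as $p$ is odd), and since $t$ always divides $\frac{ms}{\lambda_1}$, Lemma \ref{8p} yields the desired pair $(\B,\B)$. If $p\nmid t$, then from $p\mid s$ and $\gcd(t,p)=1$ one gets $t\mid\frac{ms}{\lambda_1 p}$, while $4\mid t$, so Lemma \ref{non 8p} produces the pair. These subcases exhaust all possibilities for $\lambda_2=2$, and the four residue classes exhaust all admissible values of $\lambda_2$, completing the proof. The delicate point is precisely verifying that the divisibility hypotheses $4p\mid t$ (respectively $t\mid\frac{ms}{\lambda_1 p}$) demanded by Lemmas \ref{8p} and \ref{non 8p} are forced by $4\mid t$ together with the dichotomy on $p$.
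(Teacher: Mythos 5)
Your proof is correct and follows essentially the same route as the paper: the identical case analysis on $\lambda_2$ (odd with $\lambda_2=\frac{s}{2}$ via Lemma \ref{s/2}, odd otherwise via Corollary \ref{cor odd}, $\lambda_2\equiv 0\pmod 4$ via Lemma \ref{s 0}, $\lambda_2\geq 6$ with $\lambda_2\equiv 2\pmod 4$ via Lemma \ref{magg 6}), and for $\lambda_2=2$ the same three-way split among Lemmas \ref{2odd}, \ref{non 8p} and \ref{8p}. Your dichotomies ($\ell$ odd/even, then $p\mid t$ or not) are just contrapositive rephrasings of the paper's ($t\mid\frac{ms}{2\lambda_1}$ or not, then $t\mid\frac{ms}{\lambda_1 p}$ or not), and your explicit $2$-adic and $p$-adic verifications correctly fill in divisibility facts the paper states without proof.
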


\begin{proof}
If $\lambda_2=\frac{s}{2}$, the statement follows from Lemma \ref{s/2}.
If $\lambda_2\neq \frac{s}{2}$ is odd, we apply Corollary \ref{cor odd}.
If $\lambda_2\equiv 0 \pmod 4$, we use Lemma \ref{s 0}.
So, we may assume $\lambda_2\equiv 2 \pmod 4$.
If $\lambda_2\geq 6$, the statement follows from Lemma \ref{magg 6}.
Finally, suppose $\lambda_2=2$.
Since $s\geq 6$ and $s\equiv 2 \pmod 4$,
there exists  an odd prime $p$ that divides $s$.
Now, our analysis depends on $t$; recall that $t$ is a divisor of $\frac{ms}{\lambda_1}$.
If $t$ divides $\frac{ms}{2\lambda_1}$, we apply Lemma \ref{2odd}.
Otherwise, we must have $t\equiv 0 \pmod 4$.
If $t$ divides $\frac{ms}{\lambda_1 p}$, the result follows from Lemma \ref{non 8p}.
If $t$ does not divide $\frac{ms}{\lambda_1 p}$,
then $t$ is divisible by $p$. In particular, $t\equiv 0\pmod{4p}$ and so we can apply Lemma \ref{8p}.
\end{proof}

\begin{prop}\label{nice 2ms}
Suppose that $\lambda$ does not divide $ms$.
There exists a nice pair $(\B,\B)$, where $\B$ is a sequence of length $\frac{m}{2}$
consisting of blocks of size $2\times s$, such that 
$\supp(\B)=\Phi$ and condition \eqref{cond2ms} is satisfied.
\end{prop}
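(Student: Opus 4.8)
The plan is to adapt the block-assembly scheme of Lemma \ref{s 0}, incorporating the half-multiplicity device of Lemma \ref{lambda ms} to accommodate the exceptional element forced by condition \eqref{cond2ms}. First I would pin down the arithmetic regime. Since $\lambda$ divides $2ms$ but not $ms$, the integer $N=\frac{2ms}{\lambda}$ is odd; as $N=t(\ell-1)$, this forces $t$ to be odd and $\ell$ to be even, so we are always in the second case of \eqref{cond2ms}, in which $\frac{t\ell}{2}$ is the unique element of $\Phi$ receiving multiplicity $\frac{\lambda}{2}$. Moreover, from $ms\equiv 0\pmod 4$ and $\lambda\nmid ms$ one obtains $\lambda\equiv 0\pmod 8$ (as already noted at the beginning of this section), so both $\frac{\lambda}{4}$ and $\frac{\lambda}{8}$ are positive integers.

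Next I would reduce everything to the single shiftable $2\times 2$ block $D$ used to build $Q$ in the proof of Lemma \ref{s 0}, for which $\supp(D)=\{1\}$, $\mu(D)=4$, and which satisfies both \eqref{blocchi} and \eqref{blocchiOLD} with all column sums equal to $0$. I would form a sequence $\mathcal{D}$ of such blocks by taking, for each $x\in\Phi\setminus\{\frac{t\ell}{2}\}$, exactly $\frac{\lambda}{4}$ copies of $D\pm(x-1)$, and, for $x=\frac{t\ell}{2}$, exactly $\frac{\lambda}{8}$ copies of $D\pm\left(\frac{t\ell}{2}-1\right)$. By design $\supp(\mathcal{D})=\Phi$, every element of $\Phi\setminus\{\frac{t\ell}{2}\}$ appears up to sign $\lambda$ times and $\frac{t\ell}{2}$ appears up to sign $\frac{\lambda}{2}$ times, so condition \eqref{cond2ms} holds for $\mathcal{D}$.

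The only remaining point is that these $2\times 2$ blocks group evenly into $\frac{m}{2}$ blocks of width $s$. Counting the elements of $\mathcal{D}$ and using $|\Phi|-1=\frac{N-1}{2}$ gives $(|\Phi|-1)\frac{\lambda}{4}+\frac{\lambda}{8}=\frac{\lambda}{8}N=\frac{ms}{4}$. Since $\frac{ms}{4}=\frac{m}{2}\cdot\frac{s}{2}$ and $\frac{s}{2}$ is an (odd) integer, I would cut $\mathcal{D}$ into $\frac{m}{2}$ consecutive runs of $\frac{s}{2}$ blocks and let $B_i$ be the $2\times s$ block obtained by juxtaposing the $i$-th run. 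Because every $2\times 2$ summand has zero row and column sums, each $B_i$ automatically satisfies $\tau_1(B_i)=\tau_2(B_i)=0$ and has all column sums $0$, hence satisfies both \eqref{blocchi} and \eqref{blocchiOLD} (with all $\sigma_i=\sigma'_i=0$); since we take $\B_1=\B_2=\B$, the matching-entries condition of a nice pair is automatic. Setting $\B=(B_1,\ldots,B_{m/2})$ then yields the desired nice pair $(\B,\B)$.

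I expect no serious obstacle: the genuine content is confined to the bookkeeping of the first step (recognising that $\ell$ is even, $t$ is odd, and $\lambda\equiv 0\pmod 8$) and to the count $\frac{ms}{4}=\frac{m}{2}\cdot\frac{s}{2}$, which guarantees the uniform width $s$. The nice-pair conditions come for free because all blocks are built from the column-balanced block $D$, so, unlike the cases with $\lambda\mid ms$, none of the more elaborate blocks of Lemmas \ref{s/2}--\ref{non 8p} are needed here.
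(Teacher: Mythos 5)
Your proposal is correct and essentially reproduces the paper's own proof: the same $2\times 2$ shiftable block with entries $1,-1,-1,1$, the same multiplicity bookkeeping ($\frac{\lambda}{4}$ copies of the shifted block for each element of $\Phi\setminus\{\frac{t\ell}{2}\}$ and $\frac{\lambda}{8}$ copies for $\frac{t\ell}{2}$), and the same cutting of the resulting $\frac{ms}{4}$ blocks into $\frac{m}{2}$ juxtaposed blocks of size $2\times s$, all column sums being zero. The only difference is your preliminary observation that $\frac{2ms}{\lambda}$ must be odd (hence $t$ odd, $\ell$ even), which shows that the other case the paper also treats ($\ell$ odd or $t$ even) is in fact vacuous here --- a small sharpening of the same argument, not a different one.
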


\begin{proof}
As previously observed, we have $\lambda\equiv 0\pmod 8$.
Let $Q$ be  the following shiftable block:
$$Q=\begin{array}{|c|c|}\hline
  1 & -1 \\\hline
 -1 & 1  \\ \hline
      \end{array}\;.$$
Clearly, $Q$ satisfies both conditions \eqref{blocchi} and \eqref{blocchiOLD}.
Furthermore, $\supp(Q)=\{1\}$ and $\mu(Q)=4$.

Suppose that $\ell$ is odd or $t$ is even.
Consider the sequence $X$ obtained by taking the natural ordering $\leq$ of $\{i-1\mid i\in \Phi \}\subset \N$,
and define $Y=\frac{\lambda}{4}\ast X$.

Suppose that $\ell$ is even and $t$ is odd.
Let $X_1$ be the sequence obtained by taking the natural ordering $\leq$ of $\{i-1\mid i\in \Psi \}\subset \N$,
where $\Psi= \Phi \setminus \left\{\frac{t\ell}{2}\right\}$.
Also, let $Y_1=\frac{\lambda}{4}\ast X_1$ and let $Y_2$ be the sequence obtained by repeating $\frac{\lambda}{8}$ times
the integer $\frac{t\ell}{2}-1$.
Define $Y=Y_1\con Y_2$ and note that $|Y|=\frac{ms}{4}$.

In both cases, write $Y=(y_1,y_2,\ldots,y_{\frac{ms}{4}})$.
For all $i\in \left[1,\frac{m}{2}\right]$,
let $B_i$ the block
$$B_i=\begin{array}{|c|c|c|c|}\hline
Q \pm y_{1+(i-1)\frac{s}{2}} &  Q\pm y_{2+(i-1)\frac{s}{2}}  &   \cdots &
Q \pm y_{i\frac{s}{2}} \\[-10pt]
&&&\\\hline \end{array}.$$
Then each $B_i$ is a block of size $2\times s$:
it suffices to take the sequence $\B=\left(B_1,B_2,\ldots,B_{\frac{m}{2}}\right)$.
\end{proof}

\subsection{The subcase $k\equiv 0\pmod 4$}

Assuming $k\equiv 0\pmod 4$, from $ms=nk$ it follows that $m$ must be even.
We now explain how to arrange the blocks of the sequences previously constructed, in order to build
an integer ${}^\lambda \H_t(m,n;s,k)$.
To this  purpose, we define a `base unit' that we will fill with the elements of the blocks.

Let $\G=(G_1,\ldots,G_d)$ be a sequence of blocks such that the following property is satisfied:
\begin{equation}\label{blocchi2}
\begin{array}{l}
\textrm{there exist } b \textrm{ integers } \sigma_1,\ldots,\sigma_b \textrm{ such that the elements of }
\G \textrm{ are blocks } G_r\\
 \textrm{of size } 2\times 2b \textrm{ with } \gamma_{2i-1}(G_r)=-\gamma_{2i}(G_r)=\sigma_i \textrm{ for all } i \in [1,b].
\end{array}
\end{equation}

So, let  $\G$ be a sequence satisfying \eqref{blocchi2}, where the blocks $G_{r}=(g_{i,j}^{(r)})$
are all of size $2\times 2b$, with $2b\leq d$.
Let $P=P(\G)$ be the p.f. array of size $2d\times d$ so defined.\label{bloccoP}
For all $i \in [1,b]$ and all $j \in [1,2b]$,
the cell $(i,  i+j-1)$ of $P$ is filled with the element $g_{1,j}^{(i)}$ and the cell $(d+i,i+j-1)$ is filled with
the element $g_{2,j}^{(i)}$; here, the  column indices are taken modulo $d$.
The remaining cells of $P$ are empty. An example of such construction is given in Figure \ref{P}.

\begin{figure}[!ht]
\begin{footnotesize}
$$\begin{array}{|c|c|c|c|c|c|}\hline
 g_{1,1}^{(1)} & g_{1,2}^{(1)} & g_{1,3}^{(1)} & g_{1,4}^{(1)} &   &    \\\hline
    &  g_{1,1}^{(2)} & g_{1,2}^{(2)} & g_{1,3}^{(2)} & g_{1,4}^{(2)} &   \\ \hline
    &   & g_{1,1}^{(3)} & g_{1,2}^{(3)} & g_{1,3}^{(3)} & g_{1,4}^{(3)}   \\ \hline
 g_{1,4}^{(4)}   &   &   & g_{1,1}^{(4)} & g_{1,2}^{(4)} & g_{1,3}^{(4)}  \\ \hline
 g_{1,3}^{(5)}  & g_{1,4}^{(5)}  &   &   & g_{1,1}^{(5)} & g_{1,2}^{(5)}  \\ \hline
 g_{1,2}^{(6)} & g_{1,3}^{(6)} & g_{1,4}^{(6)} &  & & g_{1,1}^{(6)}\\\hline
 g_{2,1}^{(1)} & g_{2,2}^{(1)} & g_{2,3}^{(1)} & g_{2,4}^{(1)} &   &    \\\hline
    &  g_{2,1}^{(2)} & g_{2,2}^{(2)} & g_{2,3}^{(2)} & g_{2,4}^{(2)} &   \\ \hline
    &   & g_{2,1}^{(3)} & g_{2,2}^{(3)} & g_{2,3}^{(3)} & g_{2,4}^{(3)}   \\ \hline
 g_{2,4}^{(4)}   &   &   & g_{2,1}^{(4)} & g_{2,2}^{(4)} & g_{2,3}^{(4)}  \\ \hline
 g_{2,3}^{(5)}  & g_{2,4}^{(5)}  &   &   & g_{2,1}^{(5)} & g_{2,2}^{(5)}  \\ \hline
 g_{2,2}^{(6)} & g_{2,3}^{(6)} & g_{2,4}^{(6)} &  & & g_{2,1}^{(6)}\\\hline
  \end{array}$$
  \end{footnotesize}
  \caption{This is a $P(G_1,\ldots,G_6)$, where $G_1,\ldots,G_6$ are arrays of size  $2\times 4$.}\label{P}
\end{figure}

We prove that $P$ is a p.f. array whose columns all sum to zero.
Observe that every row of $P$ contains exactly $2b$ filled cells and every column contains exactly $4b$ elements.
The elements of the  $i$-th column of $P$ are
$$g_{1,1}^{(i)}, g_{1,2}^{(i-1)}, \ldots, g_{1,2b}^{(i+1-2b)},\;
g_{2,1}^{(i)}, g_{2,2}^{(i-1)}, \ldots, g_{2,2b}^{(i+1-2b)},$$
where the exponents must be read modulo $d$, with residues in $[1,d]$.
Since the sequence $\G$ satisfies \eqref{blocchi2}, we obtain
$$\gamma_i(P)=  \sum_{j=1}^{2b} \gamma_j (G_{i+1-j}) =
\sum_{j=1}^{2b} \gamma_j ( G_{i}) =
\sum_{u=1}^b (\sigma_u-\sigma_u)=0.$$
Furthermore, notice that $\tau_j(P)=\tau_1(G_j)$ and $\tau_{d+j}(P)=\tau_2(G_j)$ for all $j \in [1,d]$.

\begin{prop}\label{prop s2}
Suppose  $4\leq s \leq n$, $4\leq k \leq m$ and $ms=nk$.
Let $\lambda$ be a divisor of $2ms$ and let $t$ be a divisor of  $\frac{2ms}{\lambda}$.
There exists a shiftable integer ${}^\lambda\H_t(m,n;s,k)$ in each of the following cases:
\begin{itemize}
\item[$(1)$] $s\equiv 2\pmod 4$ and $k\equiv 0 \pmod 4$;
\item[$(2)$] $s\equiv 0\pmod 4$ and $k\equiv 2 \pmod 4$.
\end{itemize}
\end{prop}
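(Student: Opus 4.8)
The plan is to establish case (1) directly and to deduce case (2) from it by transposition. Transposing an $m\times n$ partially filled array exchanges the roles of rows and columns, so it carries an integer ${}^\lambda\H_t(m,n;s,k)$ to an integer ${}^\lambda\H_t(n,m;k,s)$: the support $\Phi$ together with $\lambda$ and $t$ depend only on the symmetric product $ms=nk$ and are therefore unchanged. In case (2) one has $s\equiv 0$ and $k\equiv 2\pmod 4$, so the transposed parameters $(n,m,k,s)$ put an integer $\equiv 2\pmod 4$ (namely $k\ge 6$) in the row-slot and an integer $\equiv 0\pmod 4$ in the column-slot, with $4\le k\le m$ and $4\le s\le n$; these are exactly the hypotheses of case (1). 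Hence it suffices to treat case (1), in which $s\equiv 2\pmod 4$ (so $s\ge 6$), $k\equiv 0\pmod 4$, and $m$ is even.

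First I would assemble the raw material: a list of $\frac m2$ shiftable blocks of size $2\times s$ whose entries realise $\Phi$ with the prescribed multiplicity. If $\lambda\mid ms$, I write $\lambda=\lambda_1\lambda_2$ as in \eqref{lam} and apply Proposition \ref{nice} to obtain a nice pair $(\B_1,\B_2)$ of length $\frac{m}{2\lambda_1}$ with $\mu(\B_1)=\mu(\B_2)=\lambda_2$ and $\supp(\B_1)=\supp(\B_2)=\Phi$; taking $\lambda_1$ copies produces $\frac m2$ blocks in which every element of $\Phi$ occurs, up to sign, exactly $\lambda_1\lambda_2=\lambda$ times. If $\lambda\nmid ms$ (so $\lambda\equiv 0\pmod 8$) I would instead invoke Proposition \ref{nice 2ms}, which directly yields a nice pair $(\B,\B)$ of length $\frac m2$ meeting the multiplicity prescription \eqref{cond2ms}. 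In both situations every block is shiftable and has both rows summing to $0$; moreover the first member of the pair satisfies the strong balanced-column condition \eqref{blocchi}, in which all blocks carry the common column-sum vector $(\sigma_1,-\sigma_1,\dots,\sigma_b,-\sigma_b)$, while the second member satisfies the weaker condition \eqref{blocchiOLD} and uses the same entry set block by block.

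Next I would feed the $\B_1$-blocks into the base-unit construction $P(\G)$ described above. Because every block shares the same column-sum vector, the computation $\gamma_i(P)=\sum_{j=1}^{2b}\gamma_j(G_{i+1-j})=\sum_{u=1}^{b}(\sigma_u-\sigma_u)=0$ makes every column of $P$ sum to $0$, while $\tau_j(P)=\tau_1(G_j)=0$ and $\tau_{d+j}(P)=\tau_2(G_j)=0$ make every row sum to $0$; here $2b=s\le n$ guarantees the hypothesis $2b\le d$ of the base unit. Each row of $P$ holds exactly $s$ filled cells and each column exactly $2s$. When $2s\mid k$ (equivalently $2n\mid m$) I would take $d=n$, split the $\frac m2$ blocks into $\frac{m}{2n}=\frac{k}{2s}$ consecutive groups of $n$, form one base unit $P$ per group and stack them vertically; the resulting $m\times n$ array then has $s$ cells in each row, $k$ cells in each column, vanishing line sums, support $\Phi$ and multiplicity $\lambda$ (respectively the value fixed by \eqref{cond2ms}), with shiftability inherited from the blocks.

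The hard part will be the general tiling, i.e.\ when $2s\nmid k$ (equivalently $2n\nmid m$): then the blocks cannot be grouped into complete diagonals of length $n$, and the telescoping that forces $\gamma_i(P)=0$ fails for the columns lying at the wrap-around seam, which receive only a partial set of the shifts $\sigma_i$. This is exactly where I expect the second sequence $\B_2$ to earn its place: since $\E(B_i)=\E(B_i')$, replacing a seam block by its $\B_2$-partner does not disturb $\Phi$ or any multiplicity, yet the odd/even balance $\sum_i\sigma'_{2i-1}=\sum_i\sigma'_{2i}=0$ of \eqref{blocchiOLD} is tailored to cancel precisely the residual sums on those boundary columns. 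The technical core of the argument is thus to choose the column offsets so that every column is covered exactly $k/2$ times (a count guaranteed on average by $ms=nk$) and to verify that the $\B_2$-substitution annihilates each boundary column sum; once this is done, the row condition, the support, the multiplicity and the shiftability all follow immediately from the recorded properties of the nice pair.
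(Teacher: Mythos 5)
Your reduction of case (2) to case (1) by transposition is exactly the paper's, and your construction in the special case $2n\mid m$ (stacking $\frac{m}{2n}$ base units $P$ built from whole $2\times s$ blocks with $d=n$) is correct --- it is precisely the paper's construction when $\gcd(\frac m2,n)=n$. But the general case $2n\nmid m$, which you explicitly defer as ``the hard part'', is a genuine gap, and the repair you sketch would not work. Condition \eqref{blocchiOLD} only forces the \emph{complete} sums $\sum_i\sigma'_{2i-1}$ and $\sum_i\sigma'_{2i}$ to vanish; a column at a wrap-around seam receives only a partial subset of the column sums of the blocks meeting it, and replacing a seam block $B_i$ by its partner $B_i'$ gives no control over such partial sums. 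In the paper, $\B_2$ plays no role whatsoever in this proposition: the second member of the nice pair is reserved for Proposition \ref{sk2} (the case $s,k\equiv 2\pmod 4$), where the blocks are placed along the diagonal of a square array so that every column collects \emph{all} the odd-indexed or all the even-indexed column sums --- the situation \eqref{blocchiOLD} is designed for.

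The missing idea is to subdivide the blocks rather than patch seams. Set $d=\gcd\left(\frac m2,n\right)$, write $m=2\bar md$ and $n=\bar nd$, and put $a=\frac{sd}{n}=\frac{s}{\bar n}$. This is where the hypothesis $k\equiv 0\pmod 4$ enters: from $\frac s2\cdot\frac m2=n\cdot\frac k4$ and $\gcd(\bar m,\bar n)=1$ one gets $\bar n\mid\frac s2$, so $a$ is \emph{even}. Cut each $2\times s$ block $B_h$ of $\B=\lambda_1\ast\B_1$ (or of the sequence from Proposition \ref{nice 2ms} when $\lambda\nmid ms$) into $\bar n$ consecutive pieces $T_j(B_h)$ of size $2\times a$. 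Since the cuts fall at even positions and all blocks of $\B_1$ share one common column-sum vector by \eqref{blocchi}, every sequence $\left(T_j(B_{(i-1)d+1}),\ldots,T_j(B_{id})\right)$ satisfies \eqref{blocchi2}; the base unit turns it into a $2d\times d$ block with zero column sums, and arranging these $\bar m\bar n$ blocks in a $\bar m\times\bar n$ grid yields an $m\times n$ array in which each row reassembles the first or the second row of a single $B_h$ (so row sums vanish), each row has $a\bar n=s$ filled cells and each column has $2a\bar m=k$. Support, multiplicity (in both cases $\lambda\mid ms$ and $\lambda\nmid ms$) and shiftability then follow exactly as you argued in your special case, which this construction recovers when $\bar n=1$.
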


\begin{proof}
(1) If $\lambda$ divides $ms$, let $(\B_1,\B_2)$ be the nice pair of sequences constructed in
Proposition \ref{nice} and set $\B=\lambda_1\ast\B_1$.
If $\lambda$ does not divide $ms$, let $\B$ be the sequence constructed in Proposition \ref{nice 2ms}.
Write $d=\gcd(\frac{m}{2}, n)$ and $a=\frac{sd}{n}$.
Note that $a$ is even integer. In fact, write $m =2 \bar m d $ and $n=d \bar n$. Since $k \equiv 0 \pmod 4$,
from  $\frac{s}{2}\cdot \frac{m}{2}=n\frac{k}{4}$ we obtain that $\bar n$ divides $\frac{s}{2}$.

Given a block $B_h \in \B$, define for every $j\in [1,\bar n]$ the block
$T_j(B_h)$ of size $2\times a$ consisting of the columns $C_i$ of $B_h$ with $i\in [a(j-1)+1,a j]$.
So, the block $B_h$ of size $2\times s$ is obtained juxtaposing the blocks $T_1(B_h), T_2(B_h),\ldots,T_{\bar n}(B_h)$.
Furthermore, for all $i \in [1,\bar m]$ and all $j \in [1,\bar n]$, each of the sequences
$$\left(T_j(B_{(i-1)d+1}),T_j(B_{(i-1)d+2}),\ldots,T_j(B_{id}) \right),$$
of cardinality $d$, satisfies condition \eqref{blocchi2}.

Let $A$ be an empty array of size $\bar m\times \bar n$. For every $i \in [1,\bar m]$ and  $j \in [1,\bar n]$,
replace the cell $(i,j)$ of $A$
with the block $P\left(T_j(B_{(i-1)d+1}),T_j(B_{(i-1)d+2}),\ldots,\right.$ $\left. T_j(B_{id}) \right)$,
according to the previous definition.
Note that, for all $r\in [1,\frac{m}{2}]$, we have $\tau_r(A)=\tau_1(B_r)=0$ and  $\tau_{r+\frac{m}{2}}(A)=\tau_2(B_r)=0$.

By construction, $A$ is a p.f. array of size $m\times n$, $\supp(A)=\Phi$ and  the rows and columns of $A$ sum to zero.
If $\lambda$ divides $ms$, then every element of $\Phi$ appears, up to sign, exactly
$\lambda$ times. 
If $\lambda$ does not divide $ms$, condition \eqref{cond2ms} is satisfied.
Furthermore, each row contains  $a\bar n=s$ elements and each column contains $2a\bar m=k$ elements.
We conclude that $A$ is a shiftable integer ${}^\lambda\H_t(m,n;s,k)$.\\
(2) It follows from (1). In fact, if $s\equiv 0 \pmod 4$ and $k\equiv 2\pmod 4$,
an integer ${}^\lambda\H_t(m,n;s,k)$ can be obtained simply by taking the transpose of an integer ${}^\lambda\H_t(n,m;k,s)$.
\end{proof}

The integer   ${}^6 \H_{20}(18, 15; 10,12 )$  shown in Figure \ref{big1} (on the left) has been obtained repeating
$\lambda_1=3$ times each of the blocks of Example \ref{18x10}.
In the same figure (on the right) we also give an integer
${}^{8}\H_5(16,20; 10,8)$, obtained repeating
$\lambda_1=2$ times each of the blocks of Example \ref{16x10}.

\begin{figure}
\rotatebox{90}{
\begin{footnotesize}
$\begin{array}{|c|c|c|c|c| c|c|c|c|c| c|c|c|c|c|}\hline
   \omb   1 & \omb -1 & \omb    &  -13 & -17 &     &    9 &  21 &     &   25 & -29 &     &  -33 &  37 &    \\ \hline
    \omb    &  \omb  2 &\omb  -2 &     &  -14 & -18 &     &   10 &  22 &     &   26 & -30 &     &  -34 &  38 \\ \hline
 \omb    -3 &  \omb   & \omb   3 & -19 &     &  -15 &  23 &     &   11 & -31 &     &   27 &  39 &     &  -35 \\ \hline
 \omb    -5 &  \omb 5 & \omb    &   17 &  13 &     &   -9 & -21 &     &  -25 &  29 &     &   33 & -37 &    \\ \hline
     \omb   &  \omb -6 & \omb  6 &     &   18 &  14 &     &  -10 & -22 &     &  -26 &  30 &     &   34 & -38 \\ \hline
 \omb     7 & \omb    & \omb  -7 &  15 &     &   19 & -23 &     &  -11 &  31 &     &  -27 & -39 &     &   35 \\ \hline
      1 &  -1 &     &  -13 & -17 &     &    9 &  21 &     &   25 & -29 &     &  -33 &  37 &    \\ \hline
        &    2 &  -2 &     &  -14 & -18 &     &   10 &  22 &     &   26 & -30 &     &  -34 &  38 \\ \hline
     -3 &     &    3 & -19 &     &  -15 &  23 &     &   11 & -31 &     &   27 &  39 &     &  -35 \\ \hline
     -5 &   5 &     &   17 &  13 &     &   -9 & -21 &     &  -25 &  29 &     &   33 & -37 &    \\ \hline
        &   -6 &   6 &     &   18 &  14 &     &  -10 & -22 &     &  -26 &  30 &     &   34 & -38 \\ \hline
      7 &     &   -7 &  15 &     &   19 & -23 &     &  -11 &  31 &     &  -27 & -39 &     &   35 \\ \hline
      1 &  -1 &     &  -13 & -17 &     &    9 &  21 &     &   25 & -29 &     &  -33 &  37 &    \\ \hline
        &    2 &  -2 &     &  -14 & -18 &     &   10 &  22 &     &   26 & -30 &     &  -34 &  38 \\ \hline
     -3 &     &    3 & -19 &     &  -15 &  23 &     &   11 & -31 &     &   27 &  39 &     &  -35 \\ \hline
     -5 &   5 &     &   17 &  13 &     &   -9 & -21 &     &  -25 &  29 &     &   33 & -37 &    \\ \hline
        &   -6 &   6 &     &   18 &  14 &     &  -10 & -22 &     &  -26 &  30 &     &   34 & -38 \\ \hline
      7 &     &   -7 &  15 &     &   19 & -23 &     &  -11 &  31 &     &  -27 & -39 &     &   35 \\ \hline
\end{array}$
\end{footnotesize}}
\quad
\rotatebox{90}{
\begin{footnotesize}
$\begin{array}{|c|c|c|c|c| c|c|c|c|c| c|c|c|c|c| c|c|c|c|c|}\hline
     1 &  -1 &    &    &   2 &  -2 &    &    &   3 &  -3 &    &    &   4 &  -4 &    &    &   5 &  -5 &    &    \\ \hline
       &   6 &  -6 &    &    &   7 &  -7 &    &    &   8 &  -8 &    &    &  10 & -10 &    &    &  11 & -11 &    \\ \hline
       &    &  12 & -12 &    &    &  13 & -13 &    &    &  14 & -14 &    &    &  15 & -15 &    &    &  16 & -16 \\ \hline
    -17 &    &    &  17 & -19 &    &    &  19 & -20 &    &    &  20 & -21 &    &    &  21 & -22 &    &    &  22 \\ \hline
     -1 &   1 &    &    &  -2 &   2 &    &    &  -3 &   3 &    &    &  -4 &   4 &    &    &  -5 &   5 &    &    \\ \hline
       &  -6 &   6 &    &    &  -7 &   7 &    &    &  -8 &   8 &    &    & -10 &  10 &    &    & -11 &  11 &    \\ \hline
       &    & -12 &  12 &    &    & -13 &  13 &    &    & -14 &  14 &    &    & -15 &  15 &    &    & -16 &  16 \\ \hline
     17 &    &    & -17 &  19 &    &    & -19 &  20 &    &    & -20 &  21 &    &    & -21 &  22 &    &    & -22 \\ \hline
      1 &  -1 &    &    &   2 &  -2 &    &    &   3 &  -3 &    &    &   4 &  -4 &    &    &   5 &  -5 &    &    \\ \hline
       &   6 &  -6 &    &    &   7 &  -7 &    &    &   8 &  -8 &    &    &  10 & -10 &    &    &  11 & -11 &    \\ \hline
       &    &  12 & -12 &    &    &  13 & -13 &    &    &  14 & -14 &    &    &  15 & -15 &    &    &  16 & -16 \\ \hline
    -17 &    &    &  17 & -19 &    &    &  19 & -20 &    &    &  20 & -21 &    &    &  21 & -22 &    &    &  22 \\ \hline
     -1 &   1 &    &    &  -2 &   2 &    &    &  -3 &   3 &    &    &  -4 &   4 &    &    &  -5 &   5 &    &    \\ \hline
       &  -6 &   6 &    &    &  -7 &   7 &    &    &  -8 &   8 &    &    & -10 &  10 &    &    & -11 &  11 &    \\ \hline
       &    & -12 &  12 &    &    & -13 &  13 &    &    & -14 &  14 &    &    & -15 &  15 &    &    & -16 &  16 \\ \hline
     17 &    &    & -17 &  19 &    &    & -19 &  20 &    &    & -20 &  21 &    &    & -21 &  22 &    &    & -22 \\ \hline
\end{array}$
\end{footnotesize}}
\caption{An integer  ${}^6 \H_{20}(18, 15; 10,12 )$  (on the left) and an  integer ${}^{8}\H_5(16,20;$ $10,8)$ (on the right).}
\label{big1}
\end{figure}

\subsection{The subcase $k\equiv 2\pmod 4$}

Here we only solve the case $m$ even, which implies that also $n$ is even.

\begin{prop}\label{sk2}
Suppose $6\leq s \leq n$, $6\leq k \leq m$, $ms=nk$ and $s,k\equiv 2 \pmod 4$.
Let $\lambda$ be a divisor of $2ms$ and let $t$ be a divisor of  $\frac{2ms}{\lambda}$.
If $m$ is even, there exists a shiftable integer ${}^\lambda\H_t(m,n;s,k)$.
\end{prop}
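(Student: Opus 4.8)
The plan is to follow the template of Proposition \ref{prop s2}: produce the nice pair $(\B_1,\B_2)$ of $2\times s$ blocks from Proposition \ref{nice} when $\lambda$ divides $ms$ (taking $\lambda_1\ast\B_i$ to realise the full multiplicity $\lambda=\lambda_1\lambda_2$), or the single sequence $\B$ from Proposition \ref{nice 2ms} when it does not, and then tile the $m\times n$ array with copies of a base unit obtained from these blocks. The essential new difficulty is that the base unit $P(\G)$ of the previous subsection is unusable here: in $P(\G)$ each column is covered by block-columns in pairs and receives $4b$ filled cells, i.e.\ a multiple of $4$, so it can only yield $k\equiv 0\pmod 4$. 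Writing $k=2\kappa$, the hypothesis $k\equiv 2\pmod 4$ forces $\kappa$ odd, so in the new base unit every column must be covered by an \emph{odd} number $\kappa$ of two-cell block-columns. This is precisely the situation for which the second sequence $\B_2$ of the nice pair was built: whereas the column sums of $\B_1$ cancel in consecutive pairs $\sigma_i,-\sigma_i$ (forcing the even coverage of $P(\G)$), the blocks of $\B_2$ satisfy $\sum_i\sigma'_{2i-1}=\sum_i\sigma'_{2i}=0$ by \eqref{blocchiOLD}, hence also $\sum_i\sigma'_i=0$, so their columns can be distributed into a common array column in odd-sized groups that still sum to zero. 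I read the construction of a pair, rather than a single sequence, in Proposition \ref{nice} as being reserved exactly for this case.

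First I would record the elementary consequences of the hypotheses: since $m$ is even and $s,k\equiv 2\pmod 4$, the relation $ms=nk$ forces $n$ even and makes both $s/2$ and $k/2=\kappa$ odd; writing $k=4j+2$ with $j\geq 1$ (possible since $k\geq 6$) exhibits $\kappa=2j+1$ as the odd coverage to be achieved. Then, as in Proposition \ref{prop s2}, I would fix $d=\gcd(m/2,n)$, set $\bar m=m/(2d)$ and $\bar n=n/d$, and determine the sub-block widths into which each $2\times s$ block must be cut so that, after tiling an $\bar m\times\bar n$ grid of base units, each row collects exactly $s$ cells and each column exactly $k$ cells. The parity bookkeeping here is the analogue of the computation showing $a=sd/n$ is even, but now aimed at an odd number of two-cell contributions per column.

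Next I would define the new base unit. Given a length-$d$ run of $\B_2$-type sub-blocks satisfying \eqref{blocchi2}, I would place their columns by a diagonal, wrap-around scheme in the spirit of $P(\G)$, but grouping odd- and even-indexed block-columns so that each column of the unit accumulates an odd number of them; the identities $\sum_i\sigma'_{2i-1}=\sum_i\sigma'_{2i}=0$ then make every such column sum to zero, while the row sums $\tau_1(B')=\tau_2(B')=0$ give vanishing row sums exactly as before. Tiling the array with these units yields a shiftable p.f.\ array in which each row has $s$ filled cells, each column has $k\equiv 2\pmod 4$ filled cells, and all line sums are zero. The support and multiplicity data are inherited from the nice pair through $\E(B_i)=\E(B_i')$: one gets $\supp=\Phi$ with every element occurring $\lambda$ times up to sign when $\lambda\mid ms$, and condition \eqref{cond2ms} in the remaining case.

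The main obstacle will be the explicit design of this base unit. When $k\equiv 0\pmod 4$ the pairwise cancellation of $\B_1$ makes the diagonal overlaps in $P(\G)$ balance each column automatically, and no global information about a block is needed; here, by contrast, a column is balanced only by invoking the block-internal cancellation of $\B_2$ on an odd-sized group of columns, so the placement indices must be chosen so that these groups are complete and correctly nested along the diagonal. Arranging this so that simultaneously every row still receives exactly $s$ cells, every column receives exactly $k=4j+2$ cells, and the $\B_1$- and $\B_2$-blocks are used consistently (so that the supports match $\Phi$ with the right multiplicities) is the delicate combinatorial heart of the argument, and I expect it to require an explicit description of the filling together with a case distinction on the parity of $\bar m$ and on whether $\lambda$ divides $ms$, paralleling Proposition \ref{prop s2}.
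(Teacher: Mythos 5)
Your diagnosis of the obstruction is exactly right: $P(\G)$ fills every column with a multiple of $4$ cells, and the second member $\B_2$ of the nice pair, with its global cancellations $\sum_i \sigma'_{2i-1}=\sum_i \sigma'_{2i}=0$, is indeed the tool for handling an odd number of two-cell contributions per column. But your argument stops exactly where the work begins: the ``new base unit with odd column coverage'' is never constructed, and its natural implementation fails. If you tile uniformly with $\B_2$-blocks placed along a wrapping diagonal with step $2$, a fixed array column receives $k/2$ block-columns, all of the same parity; under \eqref{blocchiOLD} the only groups of block-columns guaranteed to sum to zero are the \emph{complete} parity classes, each of size $s/2$. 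So vanishing column sums would force $(s/2)\mid(k/2)$, i.e.\ $s\mid k$, which fails in general: the admissible case $m=20$, $n=12$, $s=6$, $k=10$ (which the paper realizes explicitly in Figure \ref{big2}) already has $s\nmid k$. Hence a single homogeneous unit cannot balance all columns, and some mechanism mixing odd-count cancellation (complete classes from $\B_2$) with even-count cancellation (pairs $\sigma_i,-\sigma_i$ from $\B_1$) is unavoidable; your sketch gestures at this but gives no construction, and this is precisely the missing content.

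The paper's proof shows that no new tiling unit is needed: the mixing is done by \emph{stacking}, not tiling. Assume without loss of generality $m\geq n$, so $s\leq k$. Split the sequences as $\wt\B_2=(B'_1,\ldots,B'_{n/2})$ and $\wt\B_1=(B_{n/2+1},\ldots,B_{m/2})$. The blocks of $\wt\B_2$ are placed in a \emph{square} $n\times n$ array $A_1$ with the top-left entry of $B'_r$ in cell $(2r-1,2r-1)$: every row meets one block ($s$ cells), and every column meets exactly $s/2$ blocks --- an odd number --- receiving one complete parity class of block-columns, hence summing to zero by \eqref{blocchiOLD}. Then, since $k-s\equiv 0\pmod 4$, the blocks of $\wt\B_1$ (which satisfy \eqref{blocchi}) are fed into the construction of Proposition \ref{prop s2} to produce an $(m-n)\times n$ array $A_2$ with $s$ cells per row and $k-s$ per column; stacking $A_1$ over $A_2$ gives $s+(k-s)=k$ cells per column. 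The defining property $\E(B_i)=\E(B'_i)$ of the nice pair is what guarantees that the entries of $A_1$ and $A_2$ together realize $\Phi$ with the correct multiplicities (or condition \eqref{cond2ms} when $\lambda\nmid ms$) --- this, rather than any per-column design, is why Proposition \ref{nice} produces a pair. To complete your approach you would essentially have to rediscover this decomposition of $k$ into an odd-coverage part $s$ and a part $k-s\equiv 0\pmod 4$.
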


\begin{proof}
Without loss of generality, we may assume $m\geq n$ (and so $s\leq k$).
If $\lambda$ divides $ms$,  let $(\B_1,\B_2)$ be the nice pair of sequences constructed in  Proposition \ref{nice}.
Take $\B_1^*=\lambda_1 \ast \B_1$ and $\B_2^*=\lambda_1 \ast \B_2$.
So, $\B_1^*$ and $\B_2^*$  have length $\frac{m}{2}$ and $\mu(\B_1^*)=\mu(\B_2^*)=\lambda$.
If $\lambda$ does not divide $ms$, let $(\B_1^*,\B_2^*)$ be the nice pair of sequences constructed in 
Proposition \ref{nice 2ms}.
In both cases, write $\B_1^*=(B_1,\ldots,B_{\frac{m}{2}})$ and $\B_2^*=(B'_1,\ldots,B'_{\frac{m}{2}})$,
where $\B_1^*$ satisfies \eqref{blocchi}, $\B_2^*$ satisfies
\eqref{blocchiOLD} and
$$\supp(\B_1^*)=\supp(\B_2^*)=\left[ 1, \left\lfloor \frac{t\ell}{2} \right\rfloor\right]
\setminus \left\{j\ell : j \in [1, \lfloor t/2 \rfloor ]\right\}\quad \textrm{ with } \ell=\frac{2ms}{\lambda t}+1.$$
Set
$$\wt \B_1= \left(B_{\frac{n}{2}+1},\ldots, B_{\frac{m}{2}} \right) \equad \wt \B_2=\left(B_1',\ldots,B'_{\frac{n}{2}}
\right).$$
Since, by construction, $\E(B_i)=\E(B'_i)$ for all $i\in [1,\frac{m}{2}]$, it follows that
$\E(\wt \B_2\con \wt \B_1)=\E(\B_1^*)=\E(\B_2^*)$ and
$\supp(\wt \B_2\con \wt \B_1)=\left[ 1,\left\lfloor \frac{t\ell}{2} \right\rfloor\right]
\setminus \{j\ell : j \in [1, \lfloor t/2 \rfloor ]$.
Furthermore, if $\lambda$ divides $ms$ then $\mu(\wt \B_2\con \wt \B_1)=\lambda$;
the same holds if $\lambda$ does not divide $ms$, and $\ell$ is odd or $t$ is even;
if $\lambda$ does not divide $ms$,  $\ell$ is even and  $t$ is odd, 
then every element of $\Phi \setminus \left\{\frac{t\ell}{2}\right\}$ appears in 
$\E(\wt \B_2\con \wt \B_1)$, up to sign, exactly $\lambda$ times, while the integer 
$\frac{t\ell}{2}$  appears, up to sign, $\frac{\lambda}{2}$ times.

Using the blocks of the sequence $\wt \B_2$, we first construct a square shiftable p.f. array $A_1$ of size $n$
such that each row and each column contains $s$ filled cells and such that the elements in every row and column sum to
zero. Hence, take an empty array  $A_1$ of size $n\times n$ and arrange the $\frac{n}{2}$ blocks $B'_r=(b_{i,j}^{(r)})$
of $\wt \B_2$ in
such a way that the element $b_{1,1}^{(r)}$ fills the cell $(2r-1,2r-1)$ of $A_1$.
This process makes $A_1$ a p.f. array with $s$ filled cells in each row and in each column.
Since the rows of the blocks $B_r'$ sum to zero, also the rows of $A_1$ sum to zero.
Looking at the columns, the $s$ elements of a column of $A_1$ are
$$b_{1,s}^{(r)}, b_{2,s}^{(r)},\; b_{1,s-2}^{(r+1)}, b_{2,s-2}^{(r+1)},\;
b_{1,s-4}^{(r+2)}, b_{2,s-4}^{(r+2)}, \;\ldots,\;  b_{1,2}^{(r+s/2)}, b_{2,2}^{(r+s/2)}$$
or
$$b_{1,s-1}^{(r)}, b_{2,s-1}^{(r)},\; b_{1,s-3}^{(r+1)}, b_{2,s-3}^{(r+1)},\;
b_{1,s-5}^{(r+2)}, b_{2,s-5}^{(r+2)}, \;\ldots,\;  b_{1,1}^{(r+s/2)}, b_{2,1}^{(r+s/2)},$$
where the exponents $r,\ldots,r+s/2$ must be read modulo $\frac{n}{2}$.
Since $\wt \B_2$ satisfies condition \eqref{blocchiOLD}, the sum of these elements is
$$\sum_{j=1}^{s/2}\sigma_{2j}=0 \quad \textrm{ or }\quad  \sum_{j=1}^{s/2}\sigma_{2j-1}=0, \quad \textrm{ respectively.}$$
By construction, $\E(A_1)=\E(\wt \B_2)$.

Now, if $m=n$, then $A_1$ is actually a shiftable integer ${}^\lambda\H_t(m,n;k,s)$.
Suppose that $m>n$. If we arrange the blocks of the sequence $\wt \B_1$
mimicking what we did for the construction of an integer ${}^1\H_1(m-n, n; s, k-s)$  in
the proof of Proposition \ref{prop s2},
we obtain a shiftable p.f. array $A_2$ of size $(m-n)\times n$ such that
$\E(A_2)=\E(\wt \B_1)$, rows and columns sum to zero, each row contains $s$ filled cells and each column
contains $k-s$ filled cells.
Let $A$ be the p.f. array of size $m\times n$ obtained taking
$$A=\begin{array}{|c|}\hline
   A_1\\\hline
   A_2 \\\hline
   \end{array}.$$
Each row of  $A$ contains $s$ filled cells and each of its columns contains $s+(k-s)=k$ filled cells.
By the previous properties of $\wt \B_2\con \wt \B_1$, it follows that
$A$ is a shiftable integer ${}^\lambda\H_t(m,n;s,k)$.
\end{proof}

An integer  ${}^{28}\H_4(16,16;14,14)$ is shown in Figure \ref{big2} (on the left),
choosing $\lambda_1=2$ and $\lambda_2=14$.
In the same figure (on the right) we also give an integer
${}^{10}\H_3(20,12;6,10)$, where $\lambda_1=5$ and $\lambda_2=2$.

\begin{figure}
\rotatebox{90}{
\begin{footnotesize}
$\begin{array}{|c|c|c|c|c|c|c|c|c|c|c|c|c|c|c|c|}\hline
   1 &  2 & -1 &  1 & -1 & -2 &  1 & -1 &  2 & -2 &  1 & -1 &  2 & -2 & &  \\ \hline
   -2 & -1 &  2 & -2 &  1 &  2 & -1 &  1 & -2 &  2 & -1 &  1 & -2 &  2 & &  \\ \hline
   && 3 &  4 & -3 &  3 & -3 & -4 &  3 & -3 &  4 & -4 &  3 & -3 &  4 & -4 \\ \hline
   &&-4 & -3 &  4 & -4 &  3 &  4 & -3 &  3 & -4 &  4 & -3 &  3 & -4 &  4 \\ \hline
    7 & -7 & && 6 &  7 & -6 &  6 & -6 & -7 &  6 & -6 &  7 & -7 &  6 & -6 \\ \hline
   -7 &  7 & &&-7 & -6 &  7 & -7 &  6 &  7 & -6 &  6 & -7 &  7 & -6 &  6 \\ \hline
    8 & -8 &  9 & -9 & && 8 &  9 & -8 &  8 & -8 & -9 &  8 & -8 &  9 & -9 \\ \hline
   -8 &  8 & -9 &  9 & &&-9 & -8 &  9 & -9 &  8 &  9 & -8 &  8 & -9 &  9 \\ \hline
    2 & -2 &  1 & -1 &  2 & -2 & && 1 &  2 & -1 &  1 & -1 & -2 &  1 & -1 \\ \hline
   -2 &  2 & -1 &  1 & -2 &  2 & &&-2 & -1 &  2 & -2 &  1 &  2 & -1 &  1 \\ \hline
    3 & -3 &  4 & -4 &  3 & -3 &  4 & -4 & && 3 &  4 & -3 &  3 & -3 & -4 \\ \hline
   -3 &  3 & -4 &  4 & -3 &  3 & -4 &  4 & &&-4 & -3 &  4 & -4 &  3 &  4 \\ \hline
   -6 & -7 &  6 & -6 &  7 & -7 &  6 & -6 &  7 & -7 & && 6 &  7 & -6 &  6 \\ \hline
    6 &  7 & -6 &  6 & -7 &  7 & -6 &  6 & -7 &  7 & &&-7 & -6 &  7 & -7 \\ \hline
   -8 &  8 & -8 & -9 &  8 & -8 &  9 & -9 &  8 & -8 &  9 & -9 & && 8 &  9 \\ \hline
    9 & -9 &  8 &  9 & -8 &  8 & -9 &  9 & -8 &  8 & -9 &  9 & &&-9 & -8 \\ \hline
\end{array}$
\end{footnotesize}}
\quad
\rotatebox{90}{
\begin{footnotesize}
$\begin{array}{|c|c|c|c|c| c|c|c|c|c| c|c|}\hline
    1&  -1&  -4&  -5&   3&   6&  & & & & &   \\ \hline
    -2&   2&   5&   4&  -3&  -6&  & & & & &   \\ \hline
    & &  7&  -7& -11& -12&  10&  13&  & & &   \\ \hline
    & & -8&   8&  12&  11& -10& -13&  & & &   \\ \hline
    & & & &  1&  -1&  -4&  -5&   3&   6&  &   \\ \hline
    & & & & -2&   2&   5&   4&  -3&  -6&  &   \\ \hline
    & & & & & &  7&  -7& -11& -12&  10&  13 \\ \hline
    & & & & & & -8&   8&  12&  11& -10& -13 \\ \hline
     3&   6&  & & & & & &  1&  -1&  -4&  -5 \\ \hline
    -3&  -6&  & & & & & & -2&   2&   5&   4 \\ \hline
   -11& -12&  10&  13&  & & & & & &  7&  -7 \\ \hline
    12&  11& -10& -13&  & & & & & & -8&   8 \\ \hline
     1&  -1&  & & -4&  -5&  & &  3&   6&  &  \\ \hline
    &  7&  -7&  & &-11& -12&  & & 10&  13&    \\ \hline
    & &  1&  -1&  & & -4&  -5&  & &  3&   6 \\ \hline
    -7&  & &  7& -12&  & &-11&  13&  & & 10 \\ \hline
    -2&   2&  & &  5&   4&  & & -3&  -6&  &   \\ \hline
    & -8&   8&  & & 12&  11&  & &-10& -13&    \\ \hline
    & & -2&   2&  & &  5&   4&  & & -3&  -6 \\ \hline
     8&  & & -8&  11&  & & 12& -13&  & &-10 \\ \hline
\end{array}$
\end{footnotesize}}
\caption{An integer  ${}^{28}\H_4(16,16;14,14)$  (on the left) and an  integer ${}^{10}\H_3(20,12;6,10)$ (on the right).}
\label{big2}
\end{figure}

\section{Conclusions}

Thanks to the constructions of Sections \ref{s0k0} and \ref{s2}, we can prove Theorem \ref{mainH}.
In fact, case (1) follows from Proposition \ref{prop:k4}; cases (2) and (3) follow from Proposition \ref{prop s2}; 
case (4) follows from Proposition \ref{sk2}.
Unfortunately, we are not able to solve the existence of an integer ${}^{\lambda}\H_t(m,n;s,k)$
when $s,k\equiv 2\pmod 4$ and $m,n$ are odd.
However, we can prove the existence of an $\SMA(m,n;s,k)$  for this choice of $m,n,s,k$.

\begin{proof}[Proof of {\rm Theorem \ref{main}}]
If $s,k\equiv 0\pmod 4$, the integer ${}^2\H_1(m,n;s,k)$ we construct in Lemma 	\ref{lambda2} is actually a (shiftable) 
$\SMA(m,n;s,k)$.
Similarly, if $s\equiv 2 \pmod 4$ and $m$ is even, the integer ${}^2\H_1(m,n;s,k)$ constructed
in  Propositions \ref{prop s2} and \ref{sk2} are (shiftable) signed magic arrays.
So, we are left to consider the case $s,k\equiv 2\pmod 4$ with $m,n$ odd.

Without loss of generality, we may assume $m\geq n$ (and so $s\leq k$).
Let $A_1$ be an $\SMA(n,n;s,s)$, whose existence is assured by Theorem \ref{square}.
Clearly if $m=n$ we have nothing to prove. So, suppose $m>n$.
Since $m-n\geq 2$ is even and $k-s\equiv 0 \pmod 4$ with $k-s\geq 4$, by
Proposition \ref{prop s2} there exists a shiftable $\SMA(m-n, n;s,k-s)$, say $A_2$.
Let $A$ be the p.f. array of size $m\times n$ obtained taking
$$A=\begin{array}{|c|}\hline
   A_1\\\hline
   A_2\pm ns/2 \\\hline
   \end{array}.$$
Each row of  $A$ contains $s$ filled cells and each of its columns contains $s+(k-s)=k$ filled cells.
Also, note that $\E(A_1)=\{\pm 1, \pm 2, \ldots, \pm ns/2\}$ and
$\E(A_2\pm sn/2)=\{\pm (1+ns/2), \pm (2+ns/2), \ldots, \pm ms/2\}$.
Since  $\E(A)=\E(A_1)\cup \E(A_2)=\{\pm 1, \pm 2, \ldots, \pm ms/2\}$,  $A$ is an $\SMA(m,n;s,k)$.
\end{proof}

We can now prove the existence of magic rectangles.

\begin{proof}[Proof of {\rm Theorem \ref{mainR}}]
Let $A$ be a shiftable $\SMA(m,n;s,k)$, whose existence was proved in Theorem \ref{main},
and let $A^*$ be the p.f. array obtained by replacing every negative entry $x$ of $A$ with $x+\frac{ms}{2}$
and by replacing every positive entry $y$  with $y+\frac{ms}{2}-1$.
Since $\E(A)=\left\{-1, -2, \ldots, -\frac{ms}{2}\right\}\cup \left\{1,2,\ldots,\frac{ms}{2}\right\}$,
we obtain $\E(A^*)=\left\{0,1,\ldots,\frac{ms}{2}-1 \right\} \cup 
\left\{\frac{ms}{2},\frac{ms}{2}+1, \ldots,\right.$ $\left. ms-1 \right\}$.
This means that every element of $[0,ms-1]$ appears just once in $A^*$. Obviously, every row of $A^*$ contains exactly $s$ filled cells
and every column of $A^*$ contains exactly $k$ filled cells.
Now, since $A$ is shiftable, every row of $A$ contains $\frac{s}{2}$ negative entries and $\frac{s}{2}$ positive entries.
So, the elements of every row of $A^*$ sum to $\frac{s}{2}\left(\frac{ms}{2}+\frac{ms}{2}-1 \right)=\frac{s(ms-1)}{2}$.
Analogously, the elements of every column of $A^*$ sum to $\frac{k(ms-1)}{2}$. 
We conclude that $A^*$ is an $\MR(m,n;s,k)$.
\end{proof}

\begin{ex}
Take the following shiftable $\SMA(5,10;8,4)$, whose construction is given in Lemma \ref{lambda2}:

\begin{footnotesize}
$$A=\begin{array}{|c|c|c|c|c|c|c|c|c|c|}\hline
1  & -2 &    & -7 &  8 & 11  & -12 &     & -17 & 18\\\hline
20 &  3 & -4 &    & -9 & 10  & 13  & -14 &     & -19 \\\hline
-1 & 2  &  5 & -6 &    & -11 & 12  & 15  & -16 &    \\\hline
   & -3 &  4 &  7 & -8 &     & -13 & 14  &  17 & -18\\\hline
-20&    & -5 &  6 &  9 & -10 &     & -15 & 16  & 19 \\\hline
    \end{array}.$$
\end{footnotesize}

\noindent Proceeding as described in the proof of Theorem \ref{mainR}, we obtain the following $\MR(5,10;8,4)$:

\begin{footnotesize}
$$A^*=\begin{array}{|c|c|c|c|c|c|c|c|c|c|}\hline
20  & 18 &    & 13 &  27 & 30  & 8 &     & 3 & 37 \\\hline
39 &  22 & 16 &    & 11 & 29  & 32  & 6 &     & 1 \\\hline
19 & 21  &  24 & 14 &    & 9 & 31  & 34  & 4 &    \\\hline
   & 17 &  23 &  26 & 12 &     & 7 & 33  &  36 & 2   \\\hline
0 &    & 15 &  25 &  28 & 10 &     & 5 & 35  & 38 \\\hline
    \end{array}.$$
\end{footnotesize}
\end{ex}

\end{document}